\documentclass[reqno]{amsart}
\usepackage{amsmath,amsthm,amssymb,latexsym,fullpage,setspace,graphicx,float,xcolor,hyperref,verbatim,mathtools}
\usepackage{tikz-cd,tikz,pgfplots}
\usepackage[mode=buildnew]{standalone}% requires -shell-escape
\usetikzlibrary{decorations.markings,math,positioning,graphs}
\usepackage[utf8]{inputenc}
\usepackage[english]{babel}

\DeclareFontFamily{U}{mathx}{\hyphenchar\font45}
\DeclareFontShape{U}{mathx}{m}{n}{
      <5> <6> <7> <8> <9> <10>
      <10.95> <12> <14.4> <17.28> <20.74> <24.88>
      mathx10
      }{}
\DeclareSymbolFont{mathx}{U}{mathx}{m}{n}
\DeclareFontSubstitution{U}{mathx}{m}{n}
\DeclareMathAccent{\widecheck}{0}{mathx}{"71}
\DeclareMathAccent{\wideparen}{0}{mathx}{"75}

\theoremstyle{plain}
\newtheorem{thm}{Theorem}[section]
\newtheorem{cor}[thm]{Corollary}
\newtheorem{lem}[thm]{Lemma}

\newtheorem{prop}[thm]{Proposition}

\theoremstyle{definition}
\newtheorem{defn}[thm]{Definition}
\newtheorem{eg}[thm]{Example}

\theoremstyle{remark}
\newtheorem{remark}[thm]{Remark}

\makeatletter
\@namedef{subjclassname@2020}{%
  \textup{2020} Mathematics Subject Classification}
\makeatother

\title{Inducing contractions of the mother of all continued fractions}
\author{Karma Dajani}
\address{Department of Mathematics, Utrecht University, P.O.~Box 80010, 3508 TA Utrecht, The Netherlands}
\email{k.dajani1@uu.nl}
\author{Cor Kraaikamp}
\address{Delft University of Technology, EWI (DIAM), Mekelweg 4, 2628 CD Delft, The
Netherlands}
\email{c.kraaikamp@tudelft.nl}
\author{Slade Sanderson}
\address{Department of Mathematics, Utrecht University, P.O.~Box 80010, 3508 TA Utrecht, The Netherlands}
\email{s.b.sanderson@uu.nl}
\date{\today}
\subjclass[2020]{11A55 (Primary) 37A05; 37A44 (Secondary)}

%%%%%%%%%%%%%
%%% VARIABLES %%%
%%%%%%%%%%%%%

\def\convmat#1{
\begin{pmatrix}
u_{#1} & t_{#1}\\
s_{#1} & r_{#1}
\end{pmatrix}
}

\def\F{
\mathcal{F}
}

\def\G{
\mathcal{G}
}

%%%%%%%%%%%%%
%%%%%%%%%%%%%
%%%%%%%%%%%%%

\begin{document}

\begin{abstract}
We introduce a new, large class of continued fraction algorithms producing what are called \emph{contracted Farey expansions}.  These algorithms are defined by coupling two acceleration techniques---\emph{induced transformations} and \emph{contraction}---in the setting of Shunji Ito's (\cite{I1989}) natural extension of the Farey tent map, which generates `slow' continued fraction expansions.  In addition to defining new algorithms, we also realise several existing continued fraction algorithms in our unifying setting.  In particular, we find regular continued fractions, the second-named author's $S$-expansions, and Nakada's parameterised family of $\alpha$-continued fractions for all $0<\alpha\le 1$ as examples of contracted Farey expansions.  Moreover, we give a new description of a planar natural extension for each of the $\alpha$-continued fraction transformations as an explicit induced transformation of Ito's natural extension.  
\end{abstract}

\maketitle
\tikzset{->-/.style={decoration={markings,mark=at position #1 with {\arrow{>}}},postaction={decorate}}}

%%%%%%%%%%%%%
%%%%%%%%%%%%%
%%%%%%%%%%%%%
\section{Introduction}\label{Introduction-ch1}
%%%%%%%%%%%%%
%%%%%%%%%%%%%
%%%%%%%%%%%%%

In 1855, Seidel (\cite{S1855}) introduced a seemingly overlooked,\footnote{Contraction \emph{is} used in the analytic theory of continued fractions, but usually only for subsequences of odd or even integers (\cite{LW2008}).  See also \cite{B2014}, where the more general contraction procedure is used on the continued fraction expansion of the golden mean, $(\sqrt{5}+1)/2$.} arithmetic procedure, called \emph{contraction}, which---under mild assumptions---allows one to produce from a given generalised continued fraction ({\sc gcf}) a new {\sc gcf} whose convergents are any prescribed subsequence of the original {\sc gcf}-convergents.  Nearly ninety years later, in 1943, Kakutani introduced in \cite{K1943} \emph{induced transformations}, which accelerate a given dynamical system by only observing the dynamics within a subregion of the domain.  In 1989, Shunji Ito (\cite{I1989}) gave an explicit natural extension of what has been called\footnote{This is true `up to isomorphism.'  The maternal moniker was originally applied to the \emph{Lehner map}, which is isomorphic to the Farey tent map (\cite{DK2000}); see also \S\ref{Farey expansions and Farey convergents} below.} `the mother of all continued fractions'---the \emph{Farey tent map}---which generates `slow' continued fraction expansions (\emph{Farey expansions}) whose convergents (\emph{Farey convergents}) consist of all regular continued fraction ({\sc rcf}) convergents and so-called \emph{mediant convergents} (see \S\ref{Regular continued fractions} below).  In this article, we obtain a broad, unifying theory for various continued fraction expansions by `inducing contractions of the mother of all continued fractions.'

More formally, we use induced transformations of Ito's natural extension to govern contractions of Farey expansions.  This coupling of inducing and contracting defines a large class of continued fraction algorithms---producing what we call \emph{contracted Farey expansions}---which are parameterised by measurable subregions of the domain of Ito's natural extension.  Within this collection of algorithms we find several well-studied examples.  In particular, contracted Farey expansions contain the theory of the second-named author's $S$-expansions, which themselves contain the theory of {\sc rcf}s, Minkowski's diagonal continued fractions, Bosma's optimal continued fractions and more (\cite{K1991}).  The collection of $S$-expansions also partially contains Nakada's parameterised family of $\alpha$-continued fractions: this latter family is defined for $0\le \alpha\le 1$, but only those for which $\alpha\ge 1/2$ are realised as $S$-expansions.  Our theory of contracted Farey expansions contains Nakada's $\alpha$-continued fractions for all $0<\alpha\le 1$---thus providing a unifying framework within which to view these two partially overlapping families---and gives a new description of the natural extension of each of the $\alpha$-continued fraction transformations as an induced transformation of Ito's natural extension (cf.~\cite{KSS2012}).  

In \cite{DKS2023}, the authors use a one-to-one correspondence between certain forward orbits determined by irrationals $x\in(0,1)$ under Ito's natural extension map and the sequence of all Farey convergents ({\sc rcf}-convergents and mediants) of $x$.  With this correspondence, certain subregions of the domain of Ito's natural extension `announce' certain types of Farey convergents.  By considering induced transformations on these subregions, the authors obtain unified and simple proofs of results from, e.g., \cite{B1990,BY1996,I1989,J91}, old and classical results of Legendre and Koksma, and various new results such as generalisations of L\'evy's constant and of the Doeblin--Lenstra conjecture to subsequences of {\sc rcf}-convergents and mediants.

The subsequences from \cite{DKS2023} of Farey convergents announced by a subregion of the domain of Ito's natural extension are also of central importance in the current article: via contraction, these subsequences form the convergents of our new contracted Farey expansions.  That is, we fix a subregion $R$ of the domain $\Omega$ of Ito's natural extension and consider the subsequence of the forward orbit of a point $(x,y)\in \Omega$ which enters $R$ under the natural extension map.  Via the aforementioned one-to-one correspondence between orbits and Farey convergents, we obtain a subsequence of Farey convergents of $x$ and use contraction to produce a new {\sc gcf}-expansion of $x$ whose convergents are precisely this subsequence.  The digits of these new {\sc gcf}-expansions may be described in terms of the dynamics of the induced transformation of Ito's natural extension on the subregion $R$, and hence we obtain a large collection of continued fraction algorithms parameterised by these subregions.  

While the present article is informed by \cite{DKS2023}, these two works may be read independently.  We remark, however, that the ideas of both articles may also be combined: in \cite{Sip}, the third-named author exploits results of \cite{DKS2023} and the present article to generate new, \emph{superoptimal continued fraction} algorithms producing {\sc gcf}-expansions which have arbitrarily good approximation properties and converge arbitrarily fast.

This article is organised as follows.  In \S\ref{Generalised, semi-regular and regular continued fractions}, we set definitions and notation for generalised, semi-regular and regular continued fractions that are used throughout, and in \S\ref{Some continued fraction algorithms} we recall several continued fraction algorithms: the Gauss map and its natural extension, Nakada's $\alpha$-continued fractions and the second-named author's $S$-expansions.  We recall the Farey tent map, Farey expansions and Farey convergents in \S\ref{The Farey tent map and Farey expansions}.  In \S\ref{Ito's natural extension of the Farey tent map} we describe Ito's natural extension of the Farey tent map, and, moreover, define and set notation for induced transformations of it (\S\ref{Inducing Ito's natural extension}).  In \S\ref{Contraction} we describe contraction in the abstract setting of generalised continued fractions and in \S\ref{Contracted Farey expansions} use induced transformations of Ito's natural extension to govern contractions of Farey expansions.  Furthermore, in \S\ref{A two-sided shift for contracted Farey expansions} we define a dynamical system which acts essentially as a two-sided shift on contracted Farey expansions.  Section \ref{Examples of contracted Farey expansions} realises each of the examples from \S\ref{Some continued fraction algorithms} within our theory of contracted Farey expansions.

\bigskip

{\flushleft \textbf{Acknowledgments.}} This work is part of project number 613.009.135 of the research programme Mathematics Clusters which is financed by the Dutch Research Council (NWO).

\bigskip

%%%%%%%%%%%%%
%%%%%%%%%%%%%
%%%%%%%%%%%%%
\section{Generalised, semi-regular and regular continued fractions}\label{Generalised, semi-regular and regular continued fractions}
%%%%%%%%%%%%%
%%%%%%%%%%%%%
%%%%%%%%%%%%%

%%%%%%%%%%%%%
%%%%%%%%%%%%%
\subsection{Generalised continued fractions}\label{Generalised continued fractions}
%%%%%%%%%%%%%
%%%%%%%%%%%%%

A \textit{generalised continued fraction} ({\sc gcf}) is a formal (infinite or finite) expression of the form
\begin{equation}\label{gcf}
[\beta_0/\alpha_0;\alpha_1/\beta_1,\alpha_2/\beta_2,\dots]=\cfrac{\alpha_{-1}}{\beta_{-1}+\cfrac{\alpha_0}{\beta_0+\cfrac{\alpha_1}{\beta_1+\cfrac{\alpha_2}{\beta_2+\ddots}}}}\ ,
\end{equation}
where $(\alpha_{-1},\beta_{-1}):=(1,0)$ and for $n\ge 0$, $\alpha_n,\beta_n\in \mathbb{C}$ with $\alpha_n\neq 0$.  

\begin{remark}
Notice that for $\alpha_0,\beta_0,x\in\mathbb{C}$ with $\alpha_0$ nonzero,
\[\cfrac{1}{0+\cfrac{\alpha_0}{\beta_0+x}}=\frac1{\alpha_0}(\beta_0+x),\]
with the convention that $c/0=\infty$ and $c/\infty=0$ for $c\in\mathbb{C}\setminus\{0\}$.  Thus---although at this point it is a strictly formal expression---a {\sc gcf} should be thought of as $1/\alpha_0$ `multiplied' with the expression 
\[\beta_0+\cfrac{\alpha_1}{\beta_1+\cfrac{\alpha_2}{\beta_2+\ddots}}\] 
(hence the choice of notation $[\beta_0/\alpha_0;\alpha_1/\beta_1,\dots]$ rather than $[\alpha_0/\beta_0;\alpha_1/\beta_1,\dots]$).  Besides allowing for this inversion of $\alpha_0$, our inclusion of $\alpha_{-1}$ and $\beta_{-1}$ in \eqref{gcf} also prevents us from needing to treat the index $0$ as a special case in the matrix notation introduced below.  
\end{remark}

The digits $\alpha_n$ and $\beta_n$ are called \emph{partial numerators} and \emph{partial denominators}, respectively.  When a {\sc gcf} has only finitely many partial numerators and partial denominators, the expression on the right-hand side of \eqref{gcf} may be evaluated to give a number in $\widehat{\mathbb{C}}:=\mathbb{C}\cup\{\infty\}$.  Define for each integer $n\ge -2$ (with the obvious restriction in the finite case) the \textit{$n^{\text{th}}$ tail} of $[\beta_0/\alpha_0;\alpha_1/\beta_1,\alpha_2/\beta_2,\dots]$ to be the {\sc gcf}
\[[0/1;\alpha_{n+1}/\beta_{n+1},\alpha_{n+2}/\beta_{n+2},\alpha_{n+3}/\beta_{n+3},\dots].\]
For each integer $n\ge -1$, set
\[B_n=B_n([\beta_0/\alpha_0;\alpha_1/\beta_1,\dots]):=\begin{pmatrix}0 & \alpha_n\\ 1 & \beta_n\end{pmatrix},\] 
and for integers $-1\le m\le n$, let
\[B_{[m,n]}=B_{[m,n]}([\beta_0/\alpha_0;\alpha_1/\beta_1,\dots]):=B_mB_{m+1}\cdots B_n.\]
Notice that $\det B_{[m,n]}=(-1)^{n-m+1}\alpha_m\alpha_{m+1}\dots\alpha_n\neq 0$.  For a matrix $A=\left(\begin{smallmatrix}a & b\\ c & d\end{smallmatrix}\right)\in\mathrm{GL}_2 \mathbb{C}$, denote by $A\cdot z:=\frac{az+b}{cz+d},\ z\in\widehat{\mathbb{C}},$ the action of $A$ as a M\"obius transformation.  (We remark that for any $r\in\mathbb{C}\backslash\{0\},$ $(rA)\cdot z=A\cdot z$; this fact will be used repeatedly throughout.) Writing the entries of $B_{[m,n]}$ as $\left(\begin{smallmatrix}R_{[m,n]} & P_{[m,n]}\\ S_{[m,n]} & Q_{[m,n]}\end{smallmatrix}\right)$, we have
\[\frac{P_{[m,n]}}{Q_{[m,n]}}=B_{[m,n]}\cdot 0=\cfrac{\alpha_m}{\beta_m+\cfrac{\alpha_{m+1}}{\beta_{m+1}+\cfrac{\alpha_{m+2}}{\ddots+\cfrac{\alpha_n}{\beta_n}}}}=[0/1;\alpha_m/\beta_m,\alpha_{m+1}/\beta_{m+1},\dots,\alpha_n/\beta_n]\in\widehat{\mathbb{C}}.\]
(Notice that if each $\alpha_j,\beta_j\in\mathbb{Z}$, then $P_{[m,n]}/Q_{[m,n]}\in \mathbb{Q}\cup\{\infty\}$, but in general, $\gcd(P_{[m,n]},Q_{[m,n]})\neq 1$.)  When $m=-1$, we use the suppressed notation 
\begin{equation*}
\begin{pmatrix}R_n & P_n\\ S_n & Q_n\end{pmatrix}:=\begin{pmatrix}R_{[-1,n]} & P_{[-1,n]}\\ S_{[-1,n]} & Q_{[-1,n]}\end{pmatrix}=B_{[-1,n]}
\end{equation*}
and call $P_n/Q_n$ the \textit{$n^\text{th}$ convergent} of\footnote{Note that $[0/1;\alpha_{-1}/\beta_{-1},\alpha_0/\beta_0,\dots,\alpha_n/\beta_n]=[\beta_{0}/\alpha_{0};\alpha_1/\beta_1,\dots,\alpha_n/\beta_n]$.} $[\beta_0/\alpha_0;\alpha_1/\beta_1,\alpha_2/\beta_2,\dots]$.  If a {\sc gcf} is finite and evaluates to $x\in\mathbb{C}$, or if it is infinite and $x=\lim_{n\to\infty}P_n/Q_n\in\mathbb{C}$ exists, we call $[\beta_0/\alpha_0;\alpha_1/\beta_1,\alpha_2/\beta_2,\dots]$ a \textit{{\sc gcf}-expansion} of $x$, write $x=[\beta_0/\alpha_0;\alpha_1/\beta_1,\alpha_2/\beta_2,\dots]$ and---when the expansion $[\beta_0/\alpha_0;\alpha_1/\beta_1,\alpha_2/\beta_2,\dots]$ is understood---refer to the convergents $P_n/Q_n$ as convergents of $x$.  

Notice for any integers $-1\le m\le n$ that
\begin{align*}
\begin{pmatrix}R_{[m,n+1]} & P_{[m,n+1]}\\ S_{[m,n+1]} & Q_{[m,n+1]}\end{pmatrix}=&B_{[m,n]}B_{n+1}=\begin{pmatrix}R_{[m,n]} & P_{[m,n]}\\ S_{[m,n]} & Q_{[m,n]}\end{pmatrix}\begin{pmatrix}0 & \alpha_{n+1}\\ 1 & \beta_{n+1}\end{pmatrix}\\
=&\begin{pmatrix}P_{[m,n]} & \beta_{n+1}P_{[m,n]}+\alpha_{n+1}R_{[m,n]}\\ Q_{[m,n]} & \beta_{n+1}Q_{[m,n]}+\alpha_{n+1}S_{[m,n]}\end{pmatrix}.
\end{align*}
In particular, $R_{[m,n+1]}=P_{[m,n]}$ and $S_{[m,n+1]}=Q_{[m,n]}$.  Setting $(P_{[m,m-1]},Q_{[m,m-1]}):=(0,1)$ for all $m\ge -1$, this gives
\[B_{[m,n]}=\begin{pmatrix}P_{[m,n-1]} & P_{[m,n]}\\ Q_{[m,n-1]} & Q_{[m,n]}\end{pmatrix}, \quad -1\le m\le n,\]
and we obtain the following recurrence relations for all $-1\le m \le n$:
\begin{alignat}{3}\label{rec_rels}
P_{[m,n+1]}&=\beta_{n+1}P_{[m,n]}+\alpha_{n+1}P_{[m,n-1]}, &\qquad P_{[m,m-1]}&=0,\ P_{[m,m]}=\alpha_m,\\
Q_{[m,n+1]}&=\beta_{n+1}Q_{[m,n]}+\alpha_{n+1}Q_{[m,n-1]}, &\qquad Q_{[m,m-1]}&=1,\ Q_{[m,m]}=\beta_m.\notag
\end{alignat}
Let $(P_{-2},Q_{-2}):=(0,1)$.  When $m=-1$, the observations above give, for $n\ge -1$,
\[B_{[-1,n]}=\begin{pmatrix}P_{n-1} & P_{n}\\ Q_{n-1} & Q_{n}\end{pmatrix}\]
and the recurrence relations
\begin{alignat}{3}\label{rec_rels_classic}
P_{n+1}&=\beta_{n+1}P_n+\alpha_{n+1}P_{n-1}, &\qquad P_{-2}&=0,\ P_{-1}=1,\\
Q_{n+1}&=\beta_{n+1}Q_n+\alpha_{n+1}Q_{n-1}, &\qquad Q_{-2}&=1,\ Q_{-1}=0\notag.
\end{alignat}

\begin{remark}\label{convs_determin_digits}
The quantities $P_n, Q_n$ are defined in terms of the partial numerators and partial denominators $\alpha_n,\beta_n$ of a {\sc gcf}.  Conversely, since $\det(B_{[-1,n]})\neq 0$, the digits $\alpha_n, \beta_n$ are also determined by the quantities $P_n, Q_n$.  In particular, the recurrence relations (\ref{rec_rels_classic}) imply
\[\begin{pmatrix}
\alpha_{n+1}\\
\beta_{n+1}
\end{pmatrix}=
B_{[-1,n]}^{-1}
\begin{pmatrix}
P_{n+1}\\
Q_{n+1}
\end{pmatrix},
\qquad n\ge -1.\]
\end{remark}

\begin{remark}
It shall sometimes be useful to allow for infinite partial denominators $\beta_{n}=\infty$ for some $n\ge 1$ in a {\sc gcf} $[\beta_0/\alpha_0;\alpha_1/\beta_1,\alpha_2/\beta_2,\dots]$.  In this case, letting $n_0\ge 0$ denote smallest index for which $\beta_{n_0+1}=\infty$, the {\sc gcf} $[\beta_0/\alpha_0;\alpha_1/\beta_1,\alpha_2/\beta_2,\dots]$ is interpreted to be the finite {\sc gcf} $[\beta_0/\alpha_0;\alpha_1/\beta_1,\alpha_2/\beta_2,\dots,\alpha_{n_0}/\beta_{n_0}]$.
\end{remark}

Letting $T_n:=[0/1;\alpha_{n+1}/\beta_{n+1},\alpha_{n+2}/\beta_{n+2},\dots],\ n\ge 0$, denote the $n^\text{th}$ tail of the {\sc gcf}-expansion $x=[\beta_0/\alpha_0;\alpha_1/\beta_1,\alpha_2/\beta_2,\dots]$, one obtains 
\begin{equation}\label{x=B*tail}
x=\cfrac{\alpha_{-1}}{\beta_{-1}+\cfrac{\alpha_0}{\beta_0+\cfrac{\alpha_1}{\ddots+\cfrac{\alpha_n}{\beta_n+T_n}}}}
=\begin{pmatrix}0 & \alpha_{-1}\\ 1 & \beta_{-1}\end{pmatrix}
\begin{pmatrix}0 & \alpha_0\\ 1 & \beta_0\end{pmatrix}
\cdots 
\begin{pmatrix}0 & \alpha_n\\ 1 & \beta_n\end{pmatrix}\cdot T_n
=B_{[-1,n]}\cdot T_n.
\end{equation}
Notice also that for any $z\in\widehat{\mathbb{C}}$, 
\begin{equation}\label{B^T*z_first}
B_{[-1,n]}^T\cdot z
=\begin{pmatrix}0 & 1\\ \alpha_n & \beta_n\end{pmatrix}
\cdots
\begin{pmatrix}0 & 1\\ \alpha_0 & \beta_0\end{pmatrix}
\begin{pmatrix}0 & 1\\ 1 & 0\end{pmatrix}\cdot z
=\cfrac{1}{\beta_n+\cfrac{\alpha_n}{\beta_{n-1}+\cfrac{\alpha_{n-1}}{\ddots+\cfrac{\alpha_1}{\beta_0+\cfrac{\alpha_0}{z}}}}},
\end{equation}
or
\begin{equation}\label{B^T*z}
B_{[-1,n]}^T\cdot z=[0/1;1/\beta_n,\alpha_n/\beta_{n-1},\dots,\alpha_1/\beta_0,\alpha_0/z],
\end{equation}
where, in the case that $z=\infty$, the right-hand side is interpreted as $[0/1;1/\beta_n,\alpha_n/\beta_{n-1},\dots,\alpha_1/\beta_0]$.

%%%%%%%%%%%%%
%%%%%%%%%%%%%
\subsection{Semi-regular continued fractions}\label{Semi-regular continued fractions}
%%%%%%%%%%%%%
%%%%%%%%%%%%%

A \emph{semi-regular continued fraction} ({\sc srcf}) is a {\sc gcf} as in \eqref{gcf} with integral partial numerators and partial denominators $\alpha_n,\beta_n\in\mathbb{Z}$ satisfying
\begin{enumerate}
\item[(i)] $\alpha_0=1$ and $\alpha_n=\pm 1$ for $n\ge 1$,
\item[(ii)] $\beta_n>0$ for $n\ge 1$, and
\item[(iii)] $\alpha_{n+1}+\beta_n\ge 1$ for $n\ge 1$.
\end{enumerate}
If there are infinitely many digits, we further require
\begin{enumerate}
\item[(iv)] $\alpha_{n+1}+\beta_n\ge 2$ infinitely often.
\end{enumerate}

By Tietze's Convergence Theorem (see, say, \cite{P1950}) the above conditions guarantee that $x=\lim_{n\to\infty}P_n/Q_n\in\mathbb{R}$ always exists, and thus we call $[\beta_0/\alpha_0;\alpha_1/\beta_1,\alpha_2/\beta_2,\dots]$ a {\sc srcf}-expansion of $x$.  Notice that the convergents $P_n/Q_n$ of any {\sc srcf}-expansion of $x\in\mathbb{R}$ are reduced since
\[|P_{n-1}Q_n-P_nQ_{n-1}|=|\det(B_{[-1,n]})|=|\alpha_{-1}\alpha_0\cdots \alpha_n|=1.\]

%%%%%%%%%%%%%
%%%%%%%%%%%%%
\subsection{Regular continued fractions}\label{Regular continued fractions}
%%%%%%%%%%%%%
%%%%%%%%%%%%%

A \textit{regular continued fraction} ({\sc rcf}) is a {\sc srcf} with partial numerators $\alpha_n=1$ for $n\ge 1$.  (Note that with this assumption on partial numerators, conditions (iii) and (iv) of {\sc srcf}s are trivially satisfied for any choice of integral partial denominators satisfying condition (ii).)  A {\sc rcf} will also be denoted by
\[[a_0;a_1,a_2,\dots]:=[a_0/1;1/a_1,1/a_2,\dots], \quad a_n\in\mathbb{Z} \quad \text{with} \quad a_n>0,\ n\neq 0.\]
For a {\sc rcf}, we use the special notation $p_n:=P_n$ and $q_n:=Q_n,\ n\ge -2$, so the recurrence relations (\ref{rec_rels_classic}) become
\begin{alignat}{3}\label{rec_rels_rcf}
p_{n+1}&=a_{n+1}p_n+p_{n-1}, &\qquad p_{-2}&=0,\ p_{-1}=1,\\
q_{n+1}&=a_{n+1}q_n+q_{n-1}, &\qquad q_{-2}&=1,\ q_{-1}=0.\notag
\end{alignat}
Since a {\sc rcf} is a {\sc srcf}, the limit $x=\lim_{n\to\infty}p_n/q_n\in\mathbb{R}$ exists for any infinite choice of $a_n,\ n\ge 0$ (this can also be proven directly; see, e.g., \cite{IK02}), and the odd- and even-indexed {\sc rcf}-convergents $(p_{2k-1}/q_{2k-1})_{k\ge 0}$ and $(p_{2k}/q_{2k})_{k\ge 0}$ form strictly decreasing and strictly increasing sequences, respectively (see, e.g., Theorem 4 of \cite{K97}).  Conversely, every real number $x$ has a {\sc rcf}-expansion.  Moreover, if $x$ is irrational, its {\sc rcf}-expansion is unique and has infinitely many partial denominators $a_n$; if $x$ is rational, it has exactly two {\sc rcf}-expansions,
\[[a_0;a_1,\dots,a_n] \quad \text{and} \quad [a_0; a_1,\dots,a_n-1,1],\]
where $a_n\ge 2$ if $n\ge 1$ (\cite{IK02}).

%%%%%%%%%%%%%
\subsubsection{Mediant convergents}
%%%%%%%%%%%%%

The fractions 
\begin{equation}\label{mediants}
\frac{\lambda p_n+p_{n-1}}{\lambda q_n+q_{n-1}} \quad \text{for}\ \lambda\in\mathbb{N},\ 1\le \lambda<a_{n+1},\ n\ge -1,
\end{equation}
are called the \textit{mediants} (or \textit{mediant convergents}) of $x=[a_0;a_1,a_2,\dots]$.  Notice that if $\lambda=0$, the expression in \eqref{mediants} gives $p_{n-1}/q_{n-1}$, while if $\lambda=a_{n+1}$, the expression gives $p_{n+1}/q_{n+1}$ by the recurrence relations \eqref{rec_rels_rcf}.  Since the mediant $(a+b)/(c+d)$ of two fractions $a/c$ and $b/d$ with positive denominators lies between them in value, monotonicity of the odd-/even-indexed {\sc rcf}-convergents give the following relations for all $n\ge 0$ (see \S1.4 of \cite{K97}):
\begin{equation}\label{meds-mon-odd}
x<\frac{p_{2n+1}}{q_{2n+1}}=\frac{a_{2n+1}p_{2n}+p_{2n-1}}{a_{2n+1}q_{2n}+q_{2n-1}}<\frac{(a_{2n+1}-1)p_{2n}+p_{2n-1}}{(a_{2n+1}-1)q_{2n}+q_{2n-1}}<\dots<\frac{p_{2n}+p_{2n-1}}{q_{2n}+q_{2n-1}}<\frac{p_{2n-1}}{q_{2n-1}}
\end{equation}
and
\begin{equation}\label{meds-mon-even}
\frac{p_{2n}}{q_{2n}}<\frac{p_{2n+1}+p_{2n}}{q_{2n+1}+q_{2n}}<\dots<\frac{(a_{2n+2}-1)p_{2n+1}+p_{2n}}{(a_{2n+2}-1)q_{2n+1}+q_{2n}}<\frac{a_{2n+2}p_{2n+1}+p_{2n}}{a_{2n+2}q_{2n+1}+q_{2n}}=\frac{p_{2n+2}}{q_{2n+2}}<x.
\end{equation}

%%%%%%%%%%%%%
%%%%%%%%%%%%%
%%%%%%%%%%%%%
\section{Some continued fraction algorithms}\label{Some continued fraction algorithms}
%%%%%%%%%%%%%
%%%%%%%%%%%%%
%%%%%%%%%%%%%

In this section we introduce some important continued fraction ({\sc cf}) algorithms which shall be revisited throughout the paper.  In particular, the reader will find in \S\ref{The Gauss map} the Gauss map, which generates {\sc rcf}-expansions; in \S\ref{Nakada's alpha-continued fractions} Nakada's parameterised family of $\alpha$-{\sc cf}s, which generate {\sc srcf}s including {\sc rcf}s, Hurwitz's singular {\sc cf}s, nearest integer {\sc cf}s, and R\'enyi's backward {\sc cf}s; and in \S\ref{S-expansions} the second-named author's $S$-expansions, which generate {\sc srcf}s including Minkowski's diagonal {\sc cf}s, Bosma's optimal {\sc cf}s and (a strict subcollection of) Nakada's $\alpha$-{\sc cf}s.

%%%%%%%%%%%%%
%%%%%%%%%%%%%
\subsection{The Gauss map}\label{The Gauss map}
%%%%%%%%%%%%%
%%%%%%%%%%%%%

%%%%%%%%%%%%%
\subsubsection{The Gauss map}
%%%%%%%%%%%%%

\begin{figure}[t]
\centering
\includegraphics[width=.27\textwidth]{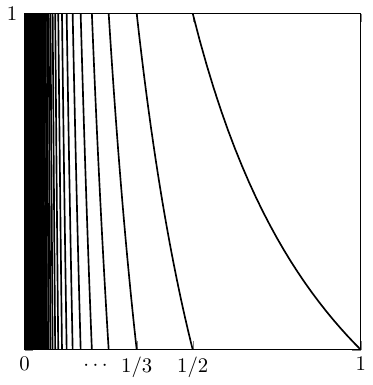}
\hspace{10pt}
\raisebox{.01\height}{\includegraphics[width=.3\textwidth]{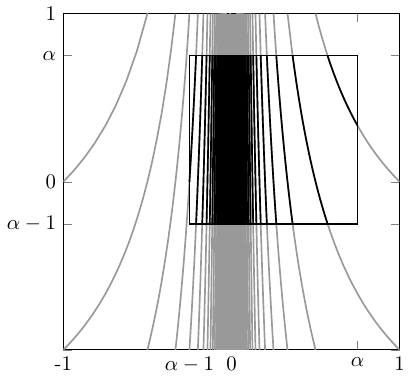}}
\hspace{17pt}
\includegraphics[width=.27\textwidth]{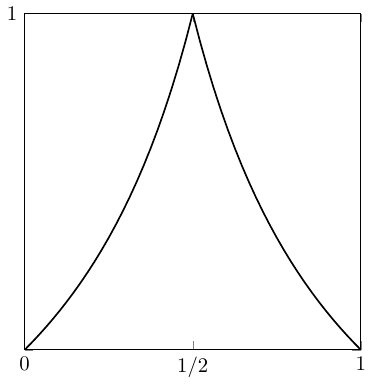}
\caption{Graphs of the Gauss map $G$ (left), Nakada's $\alpha$-{\sc cf} maps $G_\alpha$ (center), and the Farey tent map $F$ (right).}
\label{Gauss_alpha_Farey_fig}
\end{figure}

The partial denominators $a_n$ of {\sc rcf}-expansions are generated by the \textit{Gauss map} $G:[0,1]\to[0,1]$ defined by\footnote{While $G$ may be defined as a self-map of $[0,1),$ we choose to include the endpoint $1$ for later notational purposes.} $G(0)=0$ and $G(x)=1/x-\lfloor 1/x\rfloor,\ x>0$; see Figure \ref{Gauss_alpha_Farey_fig}.  Indeed, for $x\in\mathbb{R}$, set $a_0=a_0(x):=\lfloor x\rfloor$ and $x_0:=x-a_0\in [0,1)$.  Define $a(0):=\infty$, $a(x):=\lfloor 1/x\rfloor$ for $x\neq 0$, and $a_n=a_n(x):=a(G^{n-1}(x_0))$ for $n>0$.  Notice that for any integer $k\ge 1$, $a_n=k$ if and only if $G^{n-1}(x_0)\in (1/(k+1),1/k]$.  One finds that for $G^{n-1}(x_0)\neq 0$,
\[G^n(x_0)=\frac{1}{G^{n-1}(x_0)}-a_n.\]
Rearranging gives
\[G^{n-1}(x_0)=\frac1{a_n+G^n(x_0)},\]
which holds for both $G^{n-1}(x_0)\neq 0$ and $G^{n-1}(x_0)=0$, and which---with repeated applications---in turn gives
\[x=a_0+\cfrac1{a_1+\cfrac1{a_2+\ddots+\cfrac1{a_n+G^n(x_0)}}}=[a_0;a_1,\dots,a_{n-1},a_n+G^n(x_0)].\]
Symbolically, the Gauss map acts as a left-shift on {\sc rcf}-expansions.  That is, if $x=[0;a_1,a_2,\dots]\in(0,1)$, then $G(x)=[0;a_2,a_3,\dots].$

The dynamical system $([0,1],\mathcal{B},\nu_G,G)$ is exact (and hence strongly mixing, weakly mixing and ergodic; see \cite{IK02}), where $\mathcal{B}$ is the Borel $\sigma$-algebra\footnote{Throughout, $\mathcal{B}$ represents the Borel $\sigma$-algebra on the appropriate domain.} on $[0,1]$ and $\nu_G$ is the \textit{Gauss measure}, which is the absolutely continuous, $G$-invariant probability measure with density $1/(\log 2(1+x))$.

%%%%%%%%%%%%%
\subsubsection{The natural extension of the Gauss map}
%%%%%%%%%%%%%

In the late 1970s and early 1980s, Nakada, Ito and Tanaka (\cite{N1981,NIT77}) introduced an explicit, planar natural extension $(\Omega,\mathcal{B},\bar\nu_G,\mathcal{G})$ of the system $([0,1],\mathcal{B},\nu_G,G)$.  Here $\Omega:=[0,1]^2$; the map $\G:\Omega\to\Omega$ is defined by $\G(0,y)=(0,y)$ and for $z=(x,y)\in\Omega$ with $x\neq 0$,
\begin{equation}\label{GNE}
\G(z):=\left(G(x),\frac{1}{a(x)+y}\right);
\end{equation}
and the $\G$-invariant probability measure $\bar\nu_G$ has density $1/(\log 2(1+xy)^2)$.  Since $([0,1],\mathcal{B},\nu_G,G)$ is strongly mixing, so is the natural extension $(\Omega,\mathcal{B},\bar\nu_G,\mathcal{G})$.

Symbolically, the map $\G$ acts as a two-sided-shift on {\sc rcf}-expansions.  That is, if 
\[(x,y)=([0;a_1,a_2,\dots],[0;b_1,b_2,\dots])\in\Omega\] 
with $x\in(0,1)$, then
\begin{equation}\label{GaussNE}
\G(x,y)=([0;a_2,a_3,\dots],[0;a_1,b_1,b_2,\dots]).
\end{equation}
The map $\G$ may also be understood geometrically: setting
\begin{equation}\label{V&K}
V_k:=\left(\frac1{k+1},\frac1{k}\right]\times [0,1]\quad \text{and}\quad H_k:=[0,1]\times\left(\frac1{k+1},\frac1{k}\right]
\end{equation}
for each integer $k\ge 1$, one finds that $\G(V_k)=H_k$, up to null sets; see Figure \ref{GaussNE-fig}.  We call $V_k$ and $H_k$ the \textit{$k^{\textit{th}}$ vertical} and \textit{horizontal regions}, respectively.  

\begin{figure}
\centering
\includegraphics[width=.3\textwidth]{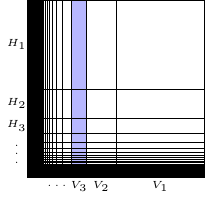}
\hspace{20pt}
\includegraphics[width=.3\textwidth]{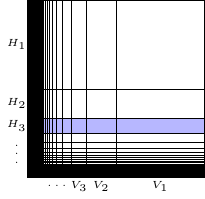}
\caption{Up to null sets, the map $\G$ sends the vertical region $V_k$ to the horizontal region $H_k$.}
\label{GaussNE-fig}
\end{figure}

%%%%%%%%%%%%%
%%%%%%%%%%%%%
\subsection{Nakada's $\alpha$-continued fractions}\label{Nakada's alpha-continued fractions}
%%%%%%%%%%%%%
%%%%%%%%%%%%%

In 1981, Nakada (\cite{N1981}) introduced a one-parameter family of continued fraction algorithms, called \emph{$\alpha$-{\sc cf} maps}, each of which generates---in a similar fashion as the Gauss map---{\sc srcf}-expansions.  For each $\alpha\in [0,1]$, Nakada's $\alpha$-{\sc cf} map $G_\alpha:[\alpha-1,\alpha]\to [\alpha-1,\alpha]$ is defined by $G_\alpha(0):=0$ and for $x\neq 0$,
\[G_\alpha(x):=\frac{1}{|x|}-\left\lfloor\frac1{|x|}+1-\alpha\right\rfloor;\]
see Figure \ref{Gauss_alpha_Farey_fig}.  Notice that $G_1=G$ is the Gauss map.  %\footnote{With our convention that $G:[0,1]\to[0,1]$, this is true up to the endpoint $1$.  However, $G([0,1])=[0,1)$.}  
In fact, Nakada's $\alpha$-{\sc cf}s contain several other well-studied continued fraction algorithms:  when $\alpha=(\sqrt{5}-1)/2$, $G_\alpha$ generates Hurwitz's \emph{singular} {\sc cf}s (\cite{H1889}); $\alpha=1/2$ generates the \emph{nearest integer} {\sc cf}s introduced by Minnigerode in \cite{M1873} and studied by Hurwitz in \cite{H1889}; and $\alpha=0$ generates R\'enyi's \emph{backward} {\sc cf}s (\cite{R1957}).  The latter map $G_0$ has an infinite, $\sigma$-finite, absolutely continuous invariant measure $\rho_0$ with density $1/(x+1)$, while for $\alpha\in (0,1]$, there is a unique, absolutely continuous invariant probability measure $\rho_\alpha$ (\cite{LM2008}).  Moreover, for each $\alpha\in [0,1]$, the dynamical system $([\alpha-1,\alpha],\mathcal{B},\rho_\alpha,G_\alpha)$ is exact and, hence, ergodic (\cite{LM2008}).

Since Nakada's introduction of the $\alpha$-{\sc cf}s, much work has been done to determine explicitly the invariant measures $\rho_\alpha$ and to understand the metric entropy $h(G_\alpha)=h_{\rho_\alpha}(G_\alpha)$ as a function $\alpha\mapsto h(G_\alpha)$ of $\alpha\in(0,1]$.  Nakada restricted his initial study in \cite{N1981} to $\alpha\ge 1/2$ and derived $\rho_\alpha$ by constructing an explicit, planar natural extension of $([\alpha-1,\alpha],\mathcal{B},\rho_\alpha,G_\alpha)$.  However, it was observed at the time that difficulties arose in extending these methods to $\alpha<1/2$.  The second-named author in 1991 (\cite{K1991}) reobtained Nakada's natural extensions for $\alpha\ge 1/2$ in a simple fashion as special instances within his theory of $S$-expansions; see \S\ref{S-expansions} below.  In 1999, Moussa et al. (\cite{MCM1999}) determined explicit, absolutely continuous invariant probability measures for a subset of a slightly different family of maps called \emph{folded $\alpha$-{\sc cf}s}, which are factors of the $\alpha$-{\sc cf} maps.  From their results one could obtain $\rho_\alpha$ for $\sqrt{2}-1\le \alpha< 1/2$ and---using Rohlin's formula---the entropy $h(G_\alpha)$ as a function of $\alpha\in[\sqrt{2}-1,1]$ (see \cite{LM2008}):
\begin{equation}\label{alphaCF-entropy}
h(G_\alpha)=\begin{cases}
\frac{\pi^2}{6\log(1+g)}, & \sqrt{2}-1\le\alpha\le g,\\
\frac{\pi^2}{6\log (1+\alpha)}, & g<\alpha\le 1,
\end{cases}
\end{equation}
where $g:=(\sqrt{5}-1)/2$.  Following this, the entropy function was conjectured to be monotone increasing and continuous on the remaining subinterval $(0,\sqrt{2}-1)$ (\cite{BDV2002}).  

It was thus quite surprising when, in 2008, Luzzi and Marmi (\cite{LM2008}) gave numerical evidence suggesting that $h(G_\alpha)$ possessed a seemingly complicated, non-monotone, self-similar structure on $(0,\sqrt{2}-1)$.  In the same year, Nakada and Natsui (\cite{nakada_natsui_08}) confirmed this non-monotonicity by giving countably many non-empty intervals on which the function is increasing, decreasing and constant, respectively.  These intervals are determined by a phenomenon called \emph{matching}, where the $G_\alpha$-orbits of $\alpha$ and $\alpha-1$ coincide after some finite number of steps.  Further numerical data on these so-called \emph{matching intervals} was given in \cite{CMPT2010}, and the authors also exhibited points in the complement of the union of matching intervals at which the entropy function fails to be locally monotone.  The matching intervals were completely classified in \cite{carminati_tiozzo_12}, and their union was shown to have full measure.  (These intervals have surprising connections to unimodal maps, the real slice of the boundary of the Mandelbrot set, and the parameter space of a family of maps producing signed binary expansions (\cite{bonanno_carminati_isola_tiozzo_13,carminati_tiozzo_13,DK2020}).)

In 2012, Kraaikamp, Schmidt and Steiner (\cite{KSS2012}) proved that the entropy function is indeed continuous on $(0,1]$ (this fact had also been proven in a 2009 preprint of Tiozzo for $\alpha>0.056\dots$ and was later improved to H\"older continuity on $(0,1]$ (\cite{T2009v1,T2014})).  In \cite{KSS2012}, the authors construct a planar natural extension of $([\alpha-1,\alpha],\mathcal{B},\rho_\alpha,G_\alpha)$ for each $\alpha>0$.  The domain of this natural extension is first defined theoretically as an orbit closure of a certain planar map; a further detailed analysis of the $G_\alpha$-orbits of $\alpha$ and $\alpha-1$ allow for a more explicit description of this domain (see \S7 of \cite{KSS2012}).  Moreover, the authors prove (Theorem 2 of \cite{KSS2012}) a conjecture of Luzzi and Marmi (\cite{LM2008}) that the product of the entropy $h(G_\alpha)$ and the measure of the natural extension domain (using density $1/(1+xy)^2$) is constant---in fact, equal to $\pi^2/6$---as a function of $\alpha$, and they extend the constant branch of $h(G_\alpha)$ in (\ref{alphaCF-entropy}) to the maximal interval $[g^2,g]$.  However, even equipped with such machinery, a number of open questions are left at the end of \cite{KSS2012}.  In particular, the authors ask for explicit values of the entropy $h(G_\alpha)$ for $\alpha<g^2$, and they restate a conjecture of \cite{CMPT2010} on the explicit form of the density of $\rho_\alpha$.

%%%%%%%%%%%%%
%%%%%%%%%%%%%
\subsection{$S$-expansions}\label{S-expansions}
%%%%%%%%%%%%%
%%%%%%%%%%%%%

In 1991, the second-named author introduced in \cite{K1991} a large class of new continued fraction algorithms by coupling two tools: singularisation and induced transformations of the natural extension $(\Omega,\mathcal{B},\bar\nu_G,\G)$ of the Gauss map.  Singularisation is an old, arithmetic procedure---tracing back as early as Lagrange's addendum (\cite{La1798}) to Euler's \emph{Vollst\"andige Anleitung zur Algebra}---whereby one can sometimes manipulate a {\sc srcf}-expansion to produce a new, `accelerated' {\sc srcf}-expansion of the same number.  

Indeed, suppose that $x$ has a {\sc srcf}-expansion $[\beta_0/\alpha_0;\alpha_1/\beta_1,\alpha_2/\beta_2,\dots]$ with convergents $(P_n/Q_n)_{n\ge -1}$.  Suppose, moreover, that $\beta_{n+1}=\alpha_{n+2}=1$ for some $n\ge 0$.  \emph{Singularisation at position $n$} replaces this {\sc srcf}-expansion with the {\sc srcf}-expansion
\[x=[\beta_0/\alpha_0;\alpha_1/\beta_1,\dots,\alpha_{n-1}/\beta_{n-1},\alpha_n/(\beta_n+\alpha_{n+1}),-\alpha_{n+1}/(\beta_{n+2}+1),\alpha_{n+3}/\beta_{n+3},\dots].\]
One can show that the convergents $(P_n'/Q_n')_{n\ge -1}$ of this new expansion satisfy
\[\frac{P_j'}{Q_j'}=\begin{cases}
\frac{P_j}{Q_j}, & j<n,\\
\frac{P_{j+1}}{Q_{j+1}}, & j\ge n,
\end{cases}\]
i.e., singularisation at position $n$ removes the $n^\text{th}$ convergent $P_n/Q_n$; see \cite{K1991}.  By iterating this procedure (possibly infinitely many times), one obtains a new {\sc srcf}-expansion of $x$ whose convergents are a subsequence $(P_{n_k}/Q_{n_k})_{k\ge -1}$ of the original convergents.

Beginning from a {\sc rcf}-expansion $[a_0;a_1,a_2,\dots]=[a_0/1;1/a_1,1/a_2,\dots]$ with convergents $p_n/q_n$, acceleration via singularisation admits two major restrictions:
\begin{enumerate}
\item[(i)] the convergents $p_n/q_n$ which are removed correspond to partial denominators $a_{n+1}=1$, and 

\item[(ii)] consecutive convergents $p_n/q_n,\ p_{n+1}/q_{n+1}$ cannot be removed.
\end{enumerate}
Restriction (ii) comes from the fact that in order to remove both $p_n/q_n$ and $p_{n+1}/q_{n+1}$, one would need to either first singularise the original expansion at position $n$, then singularise the new expansion again at position $n$, or first singularise at position $n+1$ and then at position $n$.  However, in either case, the partial denominator corresponding to the second singularisation is strictly greater than $1$, contrary to the singularisation requirements.  Nevertheless, one may singularise to remove non-consecutive convergents $p_n/q_n$ with $a_{n+1}=1$ independent of order and, thus, simultaneously; see \cite{K1991}.

In \cite{K1991}, the natural extension $(\Omega,\mathcal{B},\bar\nu_G,\G)$ of the Gauss map is used to govern the singularisation process, beginning from {\sc rcf}-expansions.  In particular, one fixes a measurable \emph{singularisation area}\footnote{Technically, condition (a) should be replaced by $S\subset \overline{V_1}$ and (b) by $S\cap \G(S)\subset\{(g,g)\}$ with $g=(\sqrt{5}-1)/2$; see Definition 4.4 and Remark 4.6.ii of \cite{K1991}.  Moreover, in \cite{K1991}, $\G$ is defined on $[0,1)\times [0,1]$ rather than $\Omega=[0,1]^2$.  What follows in \S\ref{S-expansions, revisited} below can be done with these minor adjustments, but for simplicity we shall omit these details.} $S\subset\Omega$ satisfying $\bar\nu_G(\partial S)=0$,
\begin{itemize}
\item[(a)] $S\subset V_1$, and
\item[(b)] $S\cap\G(S)=\varnothing$, 
\end{itemize}
and considers the $\G$-orbit of the point $(x,0)$ in $\Omega$ with $x=[0;a_1,a_2,\dots]$ irrational.  That $\bar\nu_G(\partial S)=0$ is a technical condition, called \emph{$\bar\nu_G$-continuity}, guaranteeing that for Lebesgue--a.e.~$x$, the $\G$-orbit of $(x,0)$ behaves like a `$\bar\nu_G$-generic' point; see Remark 4.6.i of \cite{K1991}.  Condition (a) guarantees that if $\G^n(x,0)\in S$, then $a_{n+1}=1$, and condition (b) guarantees that two consecutive points in the $\G$ orbit of $(x,0)$ do not belong to $S$; cf.~restrictions (i) and (ii) above.  By simultaneously singularising the {\sc rcf}-expansion of $x$ at all positions $n$ for which $\G^n(x,0)\in S$, one obtains a {\sc srcf}-expansion of $x$, called an \emph{$S$-expansion}, whose convergents are a subsequence of the {\sc rcf}-convergents of $x$.  Put a different way, $S$-expansions are {\sc srcf}s whose convergents are precisely the subsequence of {\sc rcf}-convergents $p_{n}/q_{n}$ for which $\G^{n}(x,0)\in \Omega\setminus S$ for $n\ge 0$, i.e., $S$-expansions are governed by the induced transformation of $(\Omega,\mathcal{B},\bar\mu_G,\G)$ on $\Omega\setminus S$.

In addition to defining new {\sc cf}-algorithms, the author shows in \cite{K1991} that many previously studied {\sc cf}-algorithms are realised by certain singularisation areas $S$.  Since these arise from induced transformations of a common dynamical system, ergodic properties of the underlying algorithms are easily comparable with one another.  The collection of $S$-expansions include Minkowski's diagonal {\sc cf}s (\cite{M1900}), Bosma's optimal {\sc cf}s (\cite{B1987}) and (the natural extensions of) Nakada's $\alpha$-{\sc cf}s.  However, the $\alpha$-{\sc cf}s realised as $S$-expansions are---somewhat curiously---only those for which $\alpha\ge 1/2$ (cf.~\cite{dJK2018}).  This is explained\footnote{Notation is changed from the original to match that of the current paper.} by Nakada and Natsui in \cite{nakada_natsui_08} for $\alpha\in[\sqrt{2}-1,1/2)$:

\begin{quote}
Here we note that the natural extension of $G_\alpha$ cannot be obtained by a simple induced transformation, in the sense of \cite{K1991}, of the natural extension of $G$.  This is related to the fact that a convergent of the continued fraction expansion of $x$ by $G_\alpha$ may not be a convergent of the [regular] continued fraction expansion of $x$.
\end{quote}

In \S\ref{Nakada's alpha-continued fractions, revisited} below, we exhibit the natural extension of each $([\alpha-1,\alpha],\mathcal{B},\rho_\alpha,G_\alpha)$, $\alpha\in (0,1],$ as an induced transformation of the natural extension of another, slower continued fraction map---the Farey tent map.

%%%%%%%%%%%%%
%%%%%%%%%%%%%
%%%%%%%%%%%%%
\section{The Farey tent map and Farey expansions}\label{The Farey tent map and Farey expansions}
%%%%%%%%%%%%%
%%%%%%%%%%%%%
%%%%%%%%%%%%%

In this section we introduce another {\sc cf}-map---the Farey tent map---whose natural extension (see \S\ref{Ito's natural extension of the Farey tent map} below) shall be of central importance to us.  The Farey tent map generates {\sc srcf}-expansions whose convergents consist of all {\sc rcf}-convergents and mediant convergents; see (\ref{mediants}) above and Proposition \ref{FareyConvs} below.  Much of this background can be found also in \cite{DKS2023}, but we include it here for completeness.

%%%%%%%%%%%%%
%%%%%%%%%%%%%
\subsection{The Farey tent map}
%%%%%%%%%%%%%
%%%%%%%%%%%%%

Define $\varepsilon:[0,1]\to\{0,1\}$ by
\[\varepsilon(x):=\begin{cases}
0, & x\le1/2,\\
1, & x> 1/2,
\end{cases}\]
and for $\varepsilon\in\{0,1\}$, set
\[A_{\varepsilon}:=\begin{pmatrix}1-\varepsilon & \varepsilon\\ 1 & 1\end{pmatrix}.\]
The \textit{Farey tent map} $F:[0,1]\to [0,1]$ is defined by 
\begin{equation}\label{F_interval}
F(x):=A_{\varepsilon(x)}^{-1}\cdot x=\begin{cases}
x/(1-x), & x\le1/2,\\
(1-x)/x, & x> 1/2;
\end{cases}
\end{equation}
see Figure \ref{Gauss_alpha_Farey_fig} above.  The dynamical system $([0,1],\mathcal{B},\mu,F)$ is ergodic, where $\mu$ is the infinite, $\sigma$-finite, absolutely continuous $F$-invariant measure with density $1/x$ (\cite{daniels_62,I1989,parry_62}).  One finds from the definition of $F$ that if $x\in[0,1]$ has {\sc rcf}-expansion\footnote{If the expansion of $x$ is finite, we set the remaining digits equal to $\infty$, e.g., $x=[0;a_1,\dots,a_n,\infty,\infty,\dots]$.  This also holds for $x$ equal to $0=[0;\infty,\infty,\dots]$ and $1=[0;1,\infty,\infty,\dots]$, interpreting $\infty-1=\infty$.} $x=[0;a_1,a_2,a_3,\dots]$, then
\begin{equation}\label{symbolic_farey}
F(x)=\begin{cases}
[0;a_1-1,a_2,a_3,\dots], & a_1>1,\\
[0;a_2,a_3,a_4,\dots], & a_1=1.
\end{cases}
\end{equation}
From this, it follows that the Gauss map $G$ is the \emph{jump transformation} of $F$ associated to the interval $(1/2,1]$, meaning that for $x$ as above with $x\neq 0$,
\[\min\{j\ge0\ |\ F^j(x)\in(1/2,1]\}=a_1-1, \quad \text{and} \quad G(x)=F^{a_1}(x);\]
see, e.g., \S11.4 of \cite{DK2021}.

%%%%%%%%%%%%%
%%%%%%%%%%%%%
\subsection{Farey expansions and Farey convergents}\label{Farey expansions and Farey convergents}
%%%%%%%%%%%%%
%%%%%%%%%%%%%

%%%%%%%%%%%%%
\subsubsection{The Farey tent map and {\sc rcf}-convergents and mediants}
%%%%%%%%%%%%%

In \cite{I1989}, Ito studied the ergodic properties of the dynamical system $([0,1],\mathcal{B},\mu,F)$ and showed via matrix relations that $F$ generates all convergents and mediant convergents of the {\sc rcf}-expansion of any irrational $x\in (0,1)$.  We reproduce this fact here, fixing notation\footnote{Notation is largely recycled from \cite{I1989} but with matrix entries permuted.} along the way. 

Recall from (\ref{F_interval}) that $F(x)=A_{\varepsilon(x)}^{-1}\cdot x$, or $x=A_{\varepsilon(x)}\cdot F(x)$.  Setting 
\begin{equation}\label{x_n&epsilon_n}
x_n:=F^n(x)\qquad \text{and} \qquad \varepsilon_{n+1}=\varepsilon_{n+1}(x):=\varepsilon(x_n),\qquad n\ge 0,
\end{equation}
we find for each $n\ge 0$ that $x_n=A_{\varepsilon(x_n)}\cdot F(x_n)=A_{\varepsilon_{n+1}}\cdot x_{n+1}$.  Repeatedly applying this beginning from $x=x_0$, we have
\[x=(A_{\varepsilon_1}A_{\varepsilon_2}\cdots A_{\varepsilon_{n}})\cdot x_n.\]
Let $A_{[0,0]}:=I_2$ be the identity matrix and for $n>0$,
\begin{equation}\label{A_[0,n]}
A_{[0,n]}=A_{[0,n]}(x):=A_{\varepsilon_1}A_{\varepsilon_2}\cdots A_{\varepsilon_n}.
\end{equation}
With this notation, 
\[x_n=F^n(x)=A_{[0,n]}^{-1}\cdot x, \quad n\ge 0.\]

We wish to determine the entries of $A_{[0,n]}$.  For $x=[0;a_1,a_2,\dots]$ irrational and $n\ge 0$, let $j_n=j_n(x)$ and $\lambda_n=\lambda_n(x)$ be the unique integers\footnote{These should be thought of in light of Euclid's division lemma: for integers $n,a\ge 0$, there exist unique integers $j$ and $\lambda$ such that $n=ja+\lambda$ with $0\le \lambda<a$.  Instead of summing a fixed integer $a$ with itself $j$ times, we sum the first $j$ {\sc rcf}-digits $a_1,a_2,\dots,a_j$ of $x$.} satisfying
\begin{equation}\label{j_n&lambda_n_alt}
n=a_1+a_2+\dots+a_{j_n}+\lambda_n,\qquad j_n\ge 0,\quad 0\le \lambda_n<a_{j_n+1}.
\end{equation}
From (\ref{symbolic_farey}), we have
\[\varepsilon_1\varepsilon_2\varepsilon_3\cdots=0^{a_1-1}10^{a_2-1}10^{a_3-1}1\cdots,\]
so
\begin{equation}\label{epsilon_seq}
\varepsilon_1\varepsilon_2\cdots\varepsilon_n=0^{a_1-1}10^{a_2-1}1\cdots 0^{a_{j_n}-1}10^{\lambda_n}.
\end{equation}
Denote the entries of $A_{[0,n]},\ n\ge 0,$ by
\[
\convmat{n}=
\begin{pmatrix}
u_n(x) & t_n(x)\\
s_n(x) & r_n(x)
\end{pmatrix}
:=A_{[0,n]},
\]
and observe that for any $k\in\mathbb{Z}$,
\begin{equation}\label{A_0^kA_1}
A_0^kA_1=\begin{pmatrix}1 & 0\\ 1 & 1\end{pmatrix}^k\begin{pmatrix}0 & 1\\ 1 & 1\end{pmatrix}=\begin{pmatrix}1 & 0\\ k & 1\end{pmatrix}\begin{pmatrix}0 & 1\\ 1 & 1\end{pmatrix}=\begin{pmatrix}0 & 1\\ 1 & k+1\end{pmatrix}.
\end{equation}
From (\ref{epsilon_seq}) and (\ref{A_0^kA_1}), it follows for $n>0$ that
\begin{align}\label{convmats}
\convmat{n}=A_{[0,n]}&=A_{\varepsilon_1}\cdots A_{\varepsilon_n}\notag \\
&=A_0^{a_1-1}A_1\cdots A_0^{a_{j_n}-1}A_1A_0^{\lambda_n} \notag\\
&=\begin{pmatrix}0 & 1\\ 1 & a_1\end{pmatrix}\cdots\begin{pmatrix}0 & 1\\ 1 & a_{j_n}\end{pmatrix}\begin{pmatrix}1 & 0\\ {\lambda_n} & 1\end{pmatrix} \notag\\
&=\begin{pmatrix}p_{{j_n}-1} & p_{j_n}\\ q_{{j_n}-1} & q_{j_n}\end{pmatrix}\begin{pmatrix}1 & 0\\ {\lambda_n} & 1\end{pmatrix} \notag\\
&=\begin{pmatrix}{\lambda_n} p_{j_n}+p_{{j_n}-1} & p_{j_n}\\ {\lambda_n} q_{j_n}+q_{{j_n}-1} & q_{j_n}\end{pmatrix},
\end{align}
where $p_j/q_j$ is the $j^\text{th}$ {\sc rcf}-convergent of $x$ (see also Lemma 1.1 of \cite{I1989}).  Equality of the first and final expressions also holds for $n={j_n}={\lambda_n}=0$ since, in this case, both matrices are the identity $I_2$.  
As a sequence, the quotients of the left-hand columns of the matrices in (\ref{convmats}) are 
\begin{alignat}{3}\label{FLseq}
\left(\frac{u_n}{s_n}\right)_{n\ge0}=\left(\frac{\lambda_np_{j_n}+p_{j_n-1}}{\lambda_nq_{j_n}+q_{j_n-1}}\right)_{n\ge0}
=\bigg(&\frac{p_{-1}}{q_{-1}}&&,\frac{p_0+p_{-1}}{q_0+q_{-1}}&&,\dots,\frac{(a_1-1)p_0+p_{-1}}{(a_1-1)q_0+q_{-1}},\\
&\frac{p_{0}}{q_{0}}&&,\frac{p_1+p_0}{q_1+q_0}&&,\dots,\frac{(a_2-1)p_1+p_0}{(a_2-1)q_1+q_0},\dots,\notag\\
&\frac{p_{j-1}}{q_{j-1}}&&,\frac{p_j+p_{j-1}}{q_j+q_{j-1}}&&,\dots,\frac{(a_{j+1}-1)p_j+p_{j-1}}{(a_{j+1}-1)q_j+q_{j-1}},\dots\bigg),\notag
\end{alignat}
i.e., the map $F$ generates all {\sc rcf}-convergents and mediants.  Notice that the denominators $(s_n)_{n\ge 0}$ do not form an increasing sequence.  Supposedly to `remedy' this, in \cite{I1989} Ito instead considers the sequence $((u_n+t_n)/(s_n+r_n))_{n\ge 0}$ with increasing denominators.  However, in light of Proposition \ref{FareyConvs} below, we find it more natural to study $(u_n/s_n)_{n\ge 0}$.

%%%%%%%%%%%%%
\subsubsection{Lehner and Farey expansions}
%%%%%%%%%%%%%

Originally, there was no continued fraction expansion associated to the Farey tent map $F$. Such expansions do exist and can be obtained from a map introduced by Lehner in 1994; see \cite{lehner_94}.  The \emph{Lehner map} (also referred to as `the mother of all continued fractions' in \cite{DK2000}) is the map $L:[1,2]\to[1,2]$ defined by
\[
L(x):=\begin{cases}
1/(2-x), & x\le3/2,\\
1/(x-1), & x>3/2.
\end{cases}
\]
For $x\in[1,2]$ and each $n\ge 0$, set
\[
(b_n,e_{n+1})=(b_n(x),e_{n+1}(x)):=\begin{cases}
(2,-1), & L^n(x)\le3/2,\\
(1,1), & L^n(x)> 3/2.
\end{cases}
\]
The digits $(b_n,e_{n+1})$ generate the so-called \textit{Lehner expansion} of $x\in[1,2]$,
\begin{equation}\label{lehner_expn}
x=[b_0/1;e_1/b_1,e_2/b_2,\dots],
\end{equation}
which is a {\sc srcf}-expansion (see \cite{DK2000,lehner_94}).  

Lehner studied expansions of the form (\ref{lehner_expn}) generated by $L$ but no dynamical properties of this map.  In \cite{DK2000} it is observed that the dynamical systems $([0,1],\mathcal{B},\mu,F)$ and $([1,2],\mathcal{B},\rho,L)$ are isomorphic via the translation $x\mapsto x+1$, where $\rho$ is the absolutely continuous, $L$-invariant measure with density $1/(x-1)$.  Through this isomorphism, the Farey tent map $F$ can be used to generate a \textit{Farey expansion} for each $x\in[0,1]$ (see also \cite{IS2008}).  Indeed, for $x\in[0,1]$, let $\varepsilon_{n+1}=\varepsilon_{n+1}(x),$ $n\ge 0,$ be as in \eqref{x_n&epsilon_n}, and let $[b_0/1;e_1/b_1,e_2/b_2,\dots]$ be the Lehner expansion of $x+1$.  Then $x=[b_0-1/1;e_1/b_1,e_2/b_2,\dots]$, and we have
\begin{align*}
(b_n,e_{n+1})&=\left.\begin{cases}
(2,-1), & L^n(x+1)\le3/2\\
(1,1), & L^n(x+1)> 3/2
\end{cases}\right\}
=\left.\begin{cases}
(2,-1), & F^n(x)\le1/2\\
(1,1), & F^n(x)> 1/2
\end{cases}\right\}\\
&=\left.\begin{cases}
(2,-1), & \varepsilon_{n+1}=0\\
(1,1), & \varepsilon_{n+1}=1
\end{cases}\right\}
=(2-\varepsilon_{n+1},2\varepsilon_{n+1}-1).
\end{align*}
Hence $F$ generates {\sc srcf}-expansions, called \textit{Farey expansions}:
\begin{equation}\label{farey_expn}
x=[(1-\varepsilon_1)/1;(2\varepsilon_1-1)/(2-\varepsilon_2),(2\varepsilon_2-1)/(2-\varepsilon_3),\dots].
\end{equation}

The convergents 
\[P_n/Q_n=[(1-\varepsilon_1)/1;(2\varepsilon_1-1)/(2-\varepsilon_2),(2\varepsilon_2-1)/(2-\varepsilon_3),\dots,(2\varepsilon_n-1)/(2-\varepsilon_{n+1})]\] 
of the Farey expansion of $x$ are called the \emph{Farey convergents} of $x$.  In \cite{DKS2023} (Proposition 3.1), it is observed that the sequence $(P_n/Q_n)_{n\ge -1}$ of Farey convergents is precisely the sequence $(u_n/s_n)_{n\ge 0}$ from \eqref{FLseq} of {\sc rcf}-convergents and mediants generated by $F$:

\begin{prop}\label{FareyConvs}
For each $n\ge 0$,
\[\begin{pmatrix}u_n \\ s_n\end{pmatrix}=\begin{pmatrix}P_{n-1} \\ Q_{n-1}\end{pmatrix},\]
where $P_n/Q_n$ is the $n^{\text{th}}$ Farey convergent of $x$.
\end{prop}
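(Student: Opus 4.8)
The plan is to prove the equivalent but slightly stronger matrix identity
\[
B_{[-1,n]}=A_{[0,n]}\begin{pmatrix}1 & 1-\varepsilon_{n+1}\\ 0 & 1\end{pmatrix},\qquad n\ge 0,
\]
where $B_{[-1,n]}$ is built from the digits of the Farey expansion \eqref{farey_expn}, and then to read off first columns. Indeed, the first column of the right-hand side is $A_{[0,n]}(1,0)^{T}=(u_n,s_n)^{T}$, while the first column of $B_{[-1,n]}=\left(\begin{smallmatrix}P_{n-1} & P_n\\ Q_{n-1} & Q_n\end{smallmatrix}\right)$ is $(P_{n-1},Q_{n-1})^{T}$; equating them yields the proposition at once.

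First I would record the {\sc gcf}-digits of the Farey expansion. Comparing \eqref{farey_expn} with the generic form $[\beta_0/\alpha_0;\alpha_1/\beta_1,\dots]$ gives $\alpha_0=1$, $\beta_0=1-\varepsilon_1$ and, for $k\ge 1$, $\alpha_k=2\varepsilon_k-1$ and $\beta_k=2-\varepsilon_{k+1}$; hence $B_k=\left(\begin{smallmatrix}0 & 2\varepsilon_k-1\\ 1 & 2-\varepsilon_{k+1}\end{smallmatrix}\right)$ for $k\ge 1$, together with $B_{-1}=\left(\begin{smallmatrix}0 & 1\\ 1 & 0\end{smallmatrix}\right)$ and $B_0=\left(\begin{smallmatrix}0 & 1\\ 1 & 1-\varepsilon_1\end{smallmatrix}\right)$. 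The base case $n=0$ is then the direct computation $B_{[-1,0]}=B_{-1}B_0=\left(\begin{smallmatrix}1 & 1-\varepsilon_1\\ 0 & 1\end{smallmatrix}\right)=A_{[0,0]}\left(\begin{smallmatrix}1 & 1-\varepsilon_1\\ 0 & 1\end{smallmatrix}\right)$, using $A_{[0,0]}=I_2$.

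For the inductive step I would use $B_{[-1,n+1]}=B_{[-1,n]}B_{n+1}$ and $A_{[0,n+1]}=A_{[0,n]}A_{\varepsilon_{n+1}}$. Substituting the inductive hypothesis and cancelling the invertible factor $A_{[0,n]}$ on the left reduces the claim to the purely local $2\times 2$ identity
\[
\begin{pmatrix}1 & 1-\varepsilon_{n+1}\\ 0 & 1\end{pmatrix}
\begin{pmatrix}0 & 2\varepsilon_{n+1}-1\\ 1 & 2-\varepsilon_{n+2}\end{pmatrix}
=A_{\varepsilon_{n+1}}
\begin{pmatrix}1 & 1-\varepsilon_{n+2}\\ 0 & 1\end{pmatrix}.
\]
Both sides equal $\left(\begin{smallmatrix}1-\varepsilon_{n+1} & 1-\varepsilon_{n+2}(1-\varepsilon_{n+1})\\ 1 & 2-\varepsilon_{n+2}\end{smallmatrix}\right)$, which I would confirm either by a short direct multiplication (expanding the off-diagonal entry as $1-\varepsilon_{n+2}+\varepsilon_{n+1}\varepsilon_{n+2}$) or simply by checking the four cases $\varepsilon_{n+1},\varepsilon_{n+2}\in\{0,1\}$.

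I expect the only genuine difficulty to be locating the correct correction matrix $\left(\begin{smallmatrix}1 & 1-\varepsilon_{n+1}\\ 0 & 1\end{smallmatrix}\right)$ in the first place. The obstruction to a naive column-by-column induction is that the Farey {\sc gcf}-digits are staggered---$\alpha_k$ depends on $\varepsilon_k$ while $\beta_k$ depends on $\varepsilon_{k+1}$---so the recurrence for the first column of $B_{[-1,n]}$ does not close without carrying along extra data. The shear by $\varepsilon_{n+1}$ is precisely the bookkeeping that absorbs this index shift and makes the induction go through; once it is guessed, verifying the identity is routine.
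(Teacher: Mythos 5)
Your proof is correct, and in fact it supplies something the paper itself does not contain: here Proposition \ref{FareyConvs} is simply imported from \cite{DKS2023} (their Proposition 3.1) with no proof given, so there is no argument in this document to compare yours against. Your strengthened identity
\[
B_{[-1,n]}=A_{[0,n]}\begin{pmatrix}1 & 1-\varepsilon_{n+1}\\ 0 & 1\end{pmatrix},\qquad n\ge 0,
\]
checks out: the base case $B_{-1}B_0=\left(\begin{smallmatrix}0&1\\1&0\end{smallmatrix}\right)\left(\begin{smallmatrix}0&1\\1&1-\varepsilon_1\end{smallmatrix}\right)=\left(\begin{smallmatrix}1&1-\varepsilon_1\\0&1\end{smallmatrix}\right)$ is right, and the local identity $S_{n+1}B_{n+1}=A_{\varepsilon_{n+1}}S_{n+2}$ holds---both sides multiply out to $\left(\begin{smallmatrix}1-\varepsilon_{n+1} & 1-\varepsilon_{n+2}+\varepsilon_{n+1}\varepsilon_{n+2}\\ 1 & 2-\varepsilon_{n+2}\end{smallmatrix}\right)$, as one confirms directly or over the four cases $\varepsilon_{n+1},\varepsilon_{n+2}\in\{0,1\}$. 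Since the correction matrix is an upper-triangular shear, the first columns agree, giving $(u_n,s_n)^T=(P_{n-1},Q_{n-1})^T$; note also that your identity is internally consistent with the index shift in the proposition, since its second column reads $(P_n,Q_n)^T=A_{[0,n]}(1-\varepsilon_{n+1},1)^T=(u_{n+1},s_{n+1})^T$, exactly the statement advanced by one step. Your closing diagnosis is also the right one: a naive column induction fails because $\alpha_k$ sees $\varepsilon_k$ while $\beta_k$ sees $\varepsilon_{k+1}$, and the shear is precisely the carried state that closes the recursion. An alternative route, closer in spirit to the machinery this paper does develop, would be to compare the closed form \eqref{convmats} for $A_{[0,n]}$ in terms of $p_j,q_j,\lambda_n$ with the recurrences \eqref{rec_rels_classic} applied to the Farey digits, but your two-line induction is cleaner and fully self-contained.
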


%%%%%%%%%%%%%
%%%%%%%%%%%%%
%%%%%%%%%%%%%
\section{Ito's natural extension of the Farey tent map}\label{Ito's natural extension of the Farey tent map}
%%%%%%%%%%%%%
%%%%%%%%%%%%%
%%%%%%%%%%%%%

We now come to one of the central tools of this article: the natural extension of the Farey tent map, originally introduced by Ito in 1989 (\cite{I1989}).  In \S\ref{The natural extension of the Farey tent map}, we recall the definition of Ito's natural extension and discuss a one-to-one correspondence between orbits under the natural extension map and Farey convergents.  Via this correspondence, we find that certain subregions of the domain of Ito's natural extension give rise to certain types of Farey convergents.  In \S\ref{Inducing Ito's natural extension}, we discuss induced transformations of Ito's natural extension and their connection with subsequences of Farey convergents.  Moreover, we revisit a theorem of Brown and Yin from \cite{BY1996} stating that the natural extension of the Gauss map is isomorphic to a certain induced transformation of Ito's natural extension, and we recall from \cite{DKS2023} that the entropy of our induced systems may be computed in terms of the measures of their domains (Theorem \ref{entropy_thm} below).  As in \S\ref{The Farey tent map and Farey expansions}, much of the material of this section can be found in \cite{DKS2023}, but we include it here for completeness and for some minor notational and definitional changes.

%%%%%%%%%%%%%
%%%%%%%%%%%%%
\subsection{The natural extension of the Farey tent map}\label{The natural extension of the Farey tent map}
%%%%%%%%%%%%%
%%%%%%%%%%%%%

In \cite{I1989}, Ito determined a planar natural extension $(\Omega,\mathcal{B},\bar\mu,\F)$ of the dynamical system $([0,1],\mathcal{B},\mu,F)$ associated to the Farey tent map.  The map $\F:\Omega\to\Omega$ is defined for each $z=(x,y)\in\Omega$ by
\begin{equation}\label{FareyNEeqn}
\F(z):=\left(A_{\varepsilon(x)}^{-1}\cdot x,A_{\varepsilon(x)}\cdot y\right)=\begin{cases}
\left(\frac{x}{1-x},\frac{y}{1+y}\right), & x\le1/2,\\
\left(\frac{1-x}{x},\frac{1}{1+y}\right), & x> 1/2,
\end{cases}
\end{equation}
where again $\Omega=[0,1]^2$, and $\bar\mu$ is the infinite, $\sigma$-finite, absolutely continuous $\F$-invariant measure with density $1/(x+y-xy)^2$.  Since $([0,1],\mathcal{B},\mu,F)$ is ergodic, so is its natural extension $(\Omega,\mathcal{B},\bar\mu,\F)$.

Notice that $\F$ is simply the Farey tent map $F$ in the first coordinate.  Setting $\varepsilon_{n+1}=\varepsilon_{n+1}(x)=\varepsilon(x_n)$ as in (\ref{x_n&epsilon_n}), we find that
\begin{equation}\label{z_n}
z_n=(x_n,y_n):=\F^n(z)=\left(A_{[0,n]}^{-1}\cdot x, A_{[n,0]}\cdot y\right), \quad n\ge 0,
\end{equation}
where $A_{[0,n]}$ is defined as in (\ref{A_[0,n]}), and 
\[A_{[n,0]}=A_{[n,0]}(x):=A_{\varepsilon_n}A_{\varepsilon_{n-1}}\cdots A_{\varepsilon_1}, \quad n\ge 1.\]
The entries of $A_{[n,0]}$ may be computed explicitly in terms of those of $A_{[0,n]}$ (recall (\ref{convmats})).  Indeed, if $x=[0;a_1,a_2,\dots],$ we have for $n>0$ that
\begin{align*}
A_{[n,0]}&=A_{\varepsilon_n}\cdots A_{\varepsilon_1}I_2\notag\\
&=A_0^{\lambda_n} A_1A_0^{a_{j_n}-1}\cdots A_1A_0^{a_1-1}A_1A_1^{-1}\notag\\
&=\begin{pmatrix}0 & 1\\ 1 & {\lambda_n}+1\end{pmatrix}\begin{pmatrix}0 & 1\\ 1 & a_{j_n}\end{pmatrix}\cdots\begin{pmatrix}0 & 1\\ 1 & a_1\end{pmatrix}\begin{pmatrix}-1 & 1\\ 1 & 0\end{pmatrix}\notag\\
&=\begin{pmatrix}0 & 1\\ 1 & {\lambda_n}+1\end{pmatrix}\left(\begin{pmatrix}0 & 1\\ 1 & a_1\end{pmatrix}\cdots\begin{pmatrix}0 & 1\\ 1 & a_{j_n}\end{pmatrix}\right)^T\begin{pmatrix}-1 & 1\\ 1 & 0\end{pmatrix}\notag\\
&=\begin{pmatrix}0 & 1\\ 1 & {\lambda_n}+1\end{pmatrix}\begin{pmatrix}p_{j_n-1} & q_{j_n-1}\\ p_{j_n} & q_{j_n}\end{pmatrix}\begin{pmatrix}-1 & 1\\ 1 & 0\end{pmatrix}\notag\\
&=\begin{pmatrix}q_{j_n}-p_{j_n} & p_{j_n}\\ ({\lambda_n}+1)q_{j_n}+q_{j_n-1}-(({\lambda_n}+1)p_{j_n}+p_{j_n-1}) &({\lambda_n}+1)p_{j_n}+p_{j_n-1}\end{pmatrix}\notag\\
&=\begin{pmatrix}r_n-t_n & t_n\\ s_n+r_n-(u_n+t_n) & u_n+t_n\end{pmatrix},
\end{align*}
and the first and final expressions are also equal to $I_2$ for $n=0$.  Notice, furthermore, that $A_{[0,n]}^T$ and $A_{[n,0]}$ are conjugate under $A_1$:
\begin{equation}\label{conjugation}
A_1A_{[0,n]}^T=\begin{pmatrix}t_n & r_n \\ u_n+t_n & s_n+r_n\end{pmatrix}=A_{[n,0]}A_1.
\end{equation}

%%%%%%%%%%%%%
\subsubsection{$\F$-orbits and Farey convergents}
%%%%%%%%%%%%%

Interpreting the map $\F$ symbolically and geometrically leads to a natural correspondence between $\F$-orbits and Farey convergents.  Fix $z=(x,y)\in\Omega$ with (finite\footnote{As in \eqref{symbolic_farey}, if the expansion of $x$ or $y$ is finite, we set the remaining digits equal to $\infty$.  If $x=1/2$, we take the shorter of its two {\sc rcf}-expansions, namely $x=[0;2]$.} or infinite) {\sc rcf}-expansions 
\begin{equation}\label{(x,y)-rcfs}
(x,y)=([0;a_1,a_2,\dots],[0;b_1,b_2,\dots]).
\end{equation}
One verifies using (\ref{symbolic_farey}) and (\ref{FareyNEeqn}) that 
\begin{equation}\label{F_NE_symbolic}
\F(z)=\begin{cases}
([0;a_1-1,a_2,\dots],[0;1+b_1,b_2,\dots]), & a_1>1,\\
([0;a_2,a_3,\dots],[0;1,b_1,b_2,\dots]), & a_1=1.
\end{cases}
\end{equation}
Recalling the vertical and horizontal regions from \eqref{V&K}, the image of the rectangle $V_{a}\cap H_{b}$ for $a>1$ is thus the rectangle $\F(V_{a}\cap H_{b})=V_{a-1}\cap H_{b+1}$ immediately below and to the right of the original rectangle, and the image of the right-half $V_1$ of $\Omega$ is the top half $\F(V_1)=H_1$, up to a Lebesgue-null set.  In particular, the iterates $\F^\lambda,\ 0\le \lambda<a$, `slide' the rectangle $V_a\cap H_1$ `down-and-right' through $a$ rectangles, and the next image $\F^{a}(V_{a}\cap H_1)$ is mapped back as a subset of $H_1$ (see Figure \ref{FareyNE}).  
\begin{figure}
\centering
\includegraphics[width=.24\textwidth]{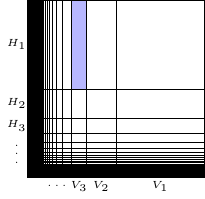}
\includegraphics[width=.24\textwidth]{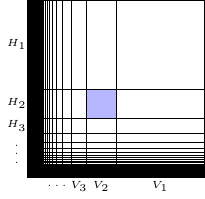}
\includegraphics[width=.24\textwidth]{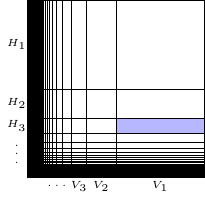}
\includegraphics[width=.24\textwidth]{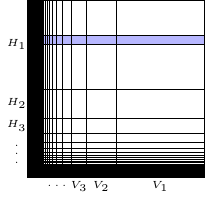}
\caption{From left to right: The sets $V_3\cap H_1,\ \F(V_3\cap H_1),\ \F^2(V_3\cap H_1)$ and $\F^3(V_3\cap H_1)$, respectively.}
\label{FareyNE}
\end{figure}

Now let $z=(x,y)$ be as in \eqref{(x,y)-rcfs} with $x$ irrational, and fix some $n\ge 0$.  Recall from (\ref{j_n&lambda_n_alt}) that $n$ may be written uniquely as
\[n=a_1+a_2+\dots+a_{j_n}+\lambda_n,\]
where $0\le \lambda_n<a_{j_n+1}$.  Repeatedly applying (\ref{F_NE_symbolic}), one finds
\begin{equation}\label{z_n-explicit}
z_n=\F^n(z)=\begin{cases}
([0;a_1-\lambda_n,a_2,\dots],[0;\lambda_n+b_1,b_2,\dots,\dots]), & n<a_1,\\
([0;a_{j_n+1}-\lambda_n,a_{j_n+2},\dots],[0;\lambda_n+1,a_{j_n},\dots,a_2,a_1-1+b_1,b_2,\dots]),\ & n\ge a_1.
\end{cases}
\end{equation}
In particular, if $z\in H_1$ so that $b_1=1$, then $z_n$ belongs to (the closure of) $V_{a_{j_n+1}-\lambda_n}\cap H_{\lambda_n+1}$ for all $n\ge 0$.

\begin{remark}\label{closure_remark}
The closure is needed in the previous statement if and only if $z=(x,1)$ with $a_1=1$ and $a_1\le n<a_1+a_2$.
Indeed, in this case
\[z_n=(x_n,y_n)=([0;a_2-\lambda_n,a_3,\dots],[0;\lambda_n+1,1]).\]
Hence $y_n=1/(\lambda_n+2)$ which implies $z_n\notin H_{\lambda_n+1}$ lies on the lower boundary of $H_{\lambda_n+1}$.  In all other cases for which $z\in H_1$, one has in fact $z_n\in V_{a_{j_n+1}-\lambda_n}\cap H_{\lambda_n+1}$ for all $n\ge 0$.  In particular, this annoyance for $z=(x,1)$ and $a_1=1$ is `corrected' for $n\ge a_1+a_2$, and the closures are no longer needed.  We shall frequently overlook this subtlety and make no mention of the special case $a_1=1$, and thus some claims should be understood up to this minor technicality.  See also Remark \ref{altered_maps} below. 
\end{remark}

Recall from Proposition \ref{FareyConvs} and (\ref{FLseq}) that the $(n-1)^\text{st}$ Farey convergent of $x$ is
\begin{equation}\label{Farey_convs}
\frac{u_n}{s_n}=\frac{\lambda_np_{j_n}+p_{j_n-1}}{\lambda_nq_{j_n}+q_{j_n-1}}.
\end{equation}
Identifying the $n^\text{th}$ point $z_n\in V_{a_{j_n+1}-\lambda_n}\cap H_{\lambda_n+1}$ of the $\F$-orbit of $z\in H_1$ with the $(n-1)^\text{st}$ Farey convergent $u_n/s_n$, we find that certain subregions $R\subset\Omega$ correspond to certain types of {\sc rcf}-convergents or mediants.  For instance, if $R=H_1$, then $z_n\in R$ if and only if $\lambda_n=0$ in (\ref{Farey_convs}), i.e., $u_n/s_n$ is a {\sc rcf}-convergent.  More generally, if $R=H_{\lambda+1}$, then $z_n\in R$ if and only if $\lambda_n=\lambda$, i.e., $u_n/s_n$ is a {\sc rcf}-convergent ($\lambda=0$) or a `$\lambda^\text{th}$ mediant' convergent ($\lambda>0$).  

Moreover, setting $R=V_a$, we have $z_n\in R$ if and only if $a=a_{j_n+1}-\lambda_n$, or $\lambda_n=a_{j_n+1}-a$.  Hence $z_n\in R$ if and only if $u_n/s_n$ is a {\sc rcf}-convergent ($a_{j_n+1}=a$) or `$(a-1)^\text{st}$-from-final' mediant convergent ($a_{j_n+1}>a$).  Lastly, setting $R=V_{a-\lambda}\cap H_{\lambda+1}$, we have $z_n\in R$ if and only if $\lambda_n=\lambda$ and $a_{j_n+1}=a$, i.e., $u_n/s_n$ is a {\sc rcf}-convergent ($\lambda=0$) or `$\lambda^\text{th}$ mediant' convergent ($\lambda>0$) with partial denominator $a_{j_n+1}=a$.

These observations naturally lead us to consider the dynamics of Ito's natural extension $(\Omega,\mathcal{B},\bar\mu,\F)$ restricted to certain subregions of the domain in order to `pick out' desired subsequences of Farey convergents.

%%%%%%%%%%%%%
%%%%%%%%%%%%%
\subsection{Inducing Ito's natural extension}\label{Inducing Ito's natural extension}
%%%%%%%%%%%%%
%%%%%%%%%%%%%

A $\bar\mu$-measurable set $R\subset \Omega$ is called \emph{inducible}\footnote{We remark that the definition of \emph{inducible} given here is broader than that in \cite{DKS2023}, where it is also required that $\bar\mu(\partial R)=0$.  This latter condition (called \emph{$\bar\mu$-continuity}) is not needed for our present purposes.} if either $0<\bar\mu(R)<\infty$ or $R=\Omega$.  In the former case (i.e., $\bar\mu(R)<\infty$), we call $R$ \emph{proper}.  For $R$ inducible, define the \textit{hitting time to $R$}, denoted $N_R:\Omega\to\mathbb{N}\cup\{\infty\}$, by
\begin{equation}\label{hit_time}
N_R(z):=\inf\{n\ge 1\ |\ \F^n(z)\in R\}.
\end{equation}
Since $(\Omega,\mathcal{B},\bar\mu,\F)$ is conservative and ergodic (\cite{BY1996}), $\bar\mu$--a.e.~$z\in\Omega$ enters $R$ infinitely often under iterates of $\F$ (see Remark 2.2.1 of \cite{DK2021}).  Unless otherwise stated, we assume throughout that the null set of points from any $S\subset \Omega$ whose $\F$-orbits enter $R$ at most finitely many times are removed from $S$, and---abusing notation---denote this new set again by $S$.  Define $\F_R:\Omega \to R$ by
\[\F_R(z):=\F^{N_R(z)}(z).\]
The \textit{induced map of $\F$ on $R$} is the map $\F_R$ restricted to $R$, and the \textit{induced measure} $\bar\mu_R$ is defined by 
\[\bar\mu_R(S):=\begin{cases}
\frac{\bar\mu(S)}{\bar\mu(R)}, \quad R\neq\Omega,\\
\bar\mu(S), \quad R=\Omega,
\end{cases} 
\quad \text{for all} \quad S\in\mathcal{B},\ S\subset R.\]
Ergodicity of the \textit{induced system} $(R,\mathcal{B},\bar\mu_R,\F_R)$ follows from that of $(\Omega,\mathcal{B},\bar\mu,\F)$.  Notice that $\bar\mu_R$ is a probability measure if and only if $R$ is proper.

Writing $z=(x,y)$ and setting $A_R(z):=A_{[0,N_R(z)]}(x),$ $A_R^{-1}(z):=(A_R(z))^{-1}$ and $A_R^{-T}(z):=(A_R(z))^{-T}$, Equations (\ref{z_n}) and (\ref{conjugation})
give
\begin{equation}\label{F_R2}
\F_R(z)=\left(A_R^{-1}(z)\cdot x,A_1A_R^T(z)A_1^{-1}\cdot y\right).
\end{equation}
We denote the entries of $A_R(z)$ by
\begin{equation}\label{A_R(z)}
\begin{pmatrix}
u_R(z) & t_R(z)\\
s_R(z) & r_R(z)
\end{pmatrix}
:=A_R(z)=A_{\varepsilon_1}\dots A_{\varepsilon_{N_R(z)}}.
\end{equation}
For $n\ge 0$, set $z_n^R=(x_n^R,y_n^R):=\F_R^n(z)$ and define $N_n^R(z)$ by $N_0^R(z):=0$ and 
\begin{equation}\label{N_n^R}
N_n^R(z):=\sum_{\ell=0}^{n-1}N_R(z_\ell^R), \quad n\ge 1.
\end{equation}
When the point $z$ is understood, we use the suppressed notation $N_n^R:=N_n^R(z),\ n\ge 0$.  
We remark that when $R=\Omega$, $N_n^R=n$ for all $n\ge 0$.  In general, the sequence $(N_n^R)_{n\ge 1}$ gives the indices $N\ge 1$ for which the forward orbit $(\F^N(z))_{N\ge 0}$ of $z$ enters the region $R$, so $\F_R^n(z)=\F^{N_n^R}(z),\ n\ge0$.  

Let $A_0^R(z):=I_2$ be the identity matrix, and for $n\ge 1$ set 
\begin{equation}\label{A_n^R}
A_n^R(z):=A_R(z_{n-1}^R)=A_{\varepsilon_{N_{n-1}^R+1}}\cdots A_{\varepsilon_{N_n^R}}.
\end{equation}
Notice that for $m\ge 1$ and $n\ge 0$,
\begin{equation}\label{A_m(z_n)=A_{m+n}(z)}
A_m^R(z_n^R)=A_R(z_{n+m-1}^R)=A_{n+m}^R(z). 
\end{equation}
Moreover, set 
\begin{equation}\label{A_{[m,n]}^R}
A_{[m,n]}^R(z):=A_m^R(z)A_{m+1}^R(z)\cdots A_{n}^R(z), \qquad 0\le m\le n,
\end{equation}
and denote the entries of $A_{[0,n]}^R(z)$ by
\begin{equation}\label{A_n^R(z)}
\begin{pmatrix}
u_n^R(z) & t_n^R(z)\\
s_n^R(z) & r_n^R(z)
\end{pmatrix}
:=A_{[0,n]}^R(z)=A_{\varepsilon_1}\cdots A_{\varepsilon_{N_n^R}}.
\end{equation}
When the point $z$ is understood, we use the suppressed notation
\begin{equation}\label{supr_not}
A_{[0,n]}^R=\begin{pmatrix}
u_n^R & t_n^R\\
s_n^R & r_n^R
\end{pmatrix}
:=A_{[0,n]}^R(z), \quad n\ge 0.
\end{equation}
From (\ref{A_n^R(z)}) and (\ref{A_[0,n]}), we have $A_{[0,n]}^R=A_{[0,N_n^R]}$ and thus, by (\ref{convmats}),
\begin{equation}\label{A_n^R_entries}
\begin{pmatrix}
u_n^R & t_n^R\\
s_n^R & r_n^R
\end{pmatrix}
=\begin{pmatrix}
u_{N_n^R} & t_{N_n^R}\\
s_{N_n^R} & r_{N_n^R}
\end{pmatrix}
=\begin{pmatrix}
\lambda_{N_n^R}p_{j_{N_n^R}}+p_{j_{N_n^R}-1} & p_{j_{N_n^R}}\\
\lambda_{N_n^R}q_{j_{N_n^R}}+q_{j_{N_n^R}-1} & q_{j_{N_n^R}}
\end{pmatrix}
, \quad n\ge 0.
\end{equation}

The following lemma will be useful in \S\ref{A two-sided shift for contracted Farey expansions} below.
\begin{lem}\label{s_R>0}
For any $z=(x,y)\in\Omega$, $u_R(z), s_R(z)\in\mathbb{Z}$ satisfy $s_R(z)>0$ and $0\le u_R(z)\le s_R(z)$. 
\end{lem}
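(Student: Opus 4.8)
The plan is to reduce the statement to a short induction on products of the two matrices $A_0=\left(\begin{smallmatrix}1&0\\1&1\end{smallmatrix}\right)$ and $A_1=\left(\begin{smallmatrix}0&1\\1&1\end{smallmatrix}\right)$. By the definition of the hitting time in \eqref{hit_time} we have $N:=N_R(z)\ge 1$, and under our standing convention $N$ is finite; by \eqref{A_R(z)}, $A_R(z)=A_{\varepsilon_1}\cdots A_{\varepsilon_N}$ is a product of exactly $N\ge 1$ factors, each equal to $A_0$ or $A_1$. Since both $A_0$ and $A_1$ have nonnegative integer entries, so does any such product; hence $u_R(z),s_R(z)\in\mathbb{Z}$ with $u_R(z)\ge 0$. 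It remains to show $s_R(z)>0$ and $u_R(z)\le s_R(z)$.

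The key step is to prove by induction on $N\ge 1$ the strengthened claim that, writing $\left(\begin{smallmatrix}u&t\\s&r\end{smallmatrix}\right):=A_{\varepsilon_1}\cdots A_{\varepsilon_N}$, one has $s>0$, $r>0$, and the two columnwise bounds $0\le u\le s$ and $0\le t\le r$. The base case $N=1$ is immediate from the explicit entries of $A_0$ and $A_1$. For the inductive step one right-multiplies a matrix satisfying the claim by $A_0$ or by $A_1$, using $\left(\begin{smallmatrix}u&t\\s&r\end{smallmatrix}\right)A_0=\left(\begin{smallmatrix}u+t&t\\s+r&r\end{smallmatrix}\right)$ and $\left(\begin{smallmatrix}u&t\\s&r\end{smallmatrix}\right)A_1=\left(\begin{smallmatrix}t&u+t\\r&s+r\end{smallmatrix}\right)$. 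In either case the new columnwise bounds follow by summing the two old ones, and strict positivity of the new bottom-row entries follows from $s,r>0$. Specialising to $N=N_R(z)$ then gives $s_R(z)>0$ and $0\le u_R(z)\le s_R(z)$, completing the proof.

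I do not anticipate a real obstacle here; the one point requiring care is the choice of inductive hypothesis. The lone inequality $0\le u\le s$ is not preserved under right-multiplication, since verifying $u+t\le s+r$ after applying $A_0$ needs the companion bound $t\le r$ on the second column; likewise, concluding $s>0$ after applying $A_1$ (whose effect sends the old $r$ to the new bottom-left entry) forces us to also carry $r>0$. Once both columns and both positivity statements are bundled into the hypothesis, the induction closes mechanically.

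It is worth noting an alternative route through \eqref{A_n^R_entries}, where $u_R(z)/s_R(z)$ is displayed as a Farey convergent of $x$---an {\sc rcf}-convergent or mediant---and hence lies in $[0,1]$, giving $0\le u_R(z)\le s_R(z)$, while $s_R(z)=\lambda_N q_{j_N}+q_{j_N-1}$ is positive because $q_j\ge 1$ for $j\ge 0$. This argument is equally valid but must treat separately the edge case $j_N=0$, where $q_{j_N-1}=q_{-1}=0$ and positivity instead comes from $\lambda_N=N\ge 1$; the matrix induction avoids such case distinctions and makes no use of the irrationality of $x$.
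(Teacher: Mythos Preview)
Your proof is correct and takes a genuinely different route from the paper's own argument. The paper proceeds via the Farey-convergent representation \eqref{A_n^R_entries}: writing $N=N_1^R$, it reads off $u_R(z)=\lambda_N p_{j_N}+p_{j_N-1}$ and $s_R(z)=\lambda_N q_{j_N}+q_{j_N-1}$, then splits into the cases $j_N=0$ (where $u_R=1$, $s_R=\lambda_N>0$) and $j_N>0$ (where $u_R/s_R$ is squeezed between two {\sc rcf}-convergents in $[0,1]$). This is exactly the ``alternative route'' you sketch in your final paragraph, including the edge case you flag.

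Your primary argument---the strengthened induction carrying both columnwise bounds $0\le u\le s$, $0\le t\le r$ together with $s,r>0$ through right-multiplication by $A_0$ or $A_1$---is more elementary and self-contained: it needs nothing about {\sc rcf}-convergents, mediants, or the quantities $j_N,\lambda_N$, and in particular does not rely on $x$ being irrational (the paper's $j_N,\lambda_N$ are only defined for irrational $x$). The paper's approach, on the other hand, ties the lemma directly to the geometric picture of Farey convergents that drives the rest of the article. Your observation that the na\"{\i}ve hypothesis $0\le u\le s$ alone is not inductive, and that bundling in the second column is what makes the induction close, is exactly the right diagnosis.
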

\begin{proof}
It is clear from (\ref{A_R(z)}) that $u_R(z), s_R(z)\in\mathbb{Z}$.  Moreover, $A_R(z)=A_{[0,1]}^R$, so setting $N=N_1^R$, (\ref{A_n^R_entries}) gives
\[
\begin{pmatrix}
u_R(z)\\
s_R(z)
\end{pmatrix}=
\begin{pmatrix}
u_N\\
s_N
\end{pmatrix}=
\begin{pmatrix}
\lambda_Np_{j_N}+p_{j_N-1}\\
\lambda_Nq_{j_N}+q_{j_N-1}
\end{pmatrix}.
\]
Since $N>0$, (\ref{j_n&lambda_n_alt}) implies either $j_N>0$ or $\lambda_N>0$.  If $j_N=0$, then $u_R(z)=\lambda_Np_0+p_{-1}=p_{-1}=1$ and $s_R(z)=\lambda_Nq_0+q_{-1}=\lambda_N>0$, so the claim holds.  If $j_N>0$, then $s_R(z)\ge q_{j_N-1}\ge q_0=1$, and $u_R(z)/s_R(z)$ lies between $p_{j_N-1}/q_{j_N-1}$ and $p_{j_N+1}/q_{j_N+1}$, which are fractions between $0$ and $1$.  Thus $0\le u_R(z)\le s_R(z)$.  
\end{proof}

Notice from (\ref{A_n^R_entries}) that $(u_n^R/s_n^R)_{n\ge 0}$ is a subsequence of the Farey convergents $(u_n/s_n)_{n\ge 0}$.  In particular, the correspondence $z_n \leftrightarrow u_n/s_n$ between the $\F$-orbit of $z\in H_1$ and the Farey convergents of $x$ gives a correspondence $z_n^R \leftrightarrow u_n^R/s_n^R$ between the subsequence $(z_n^R)_{n\ge 0}=(z_{N_n^R})_{n\ge 0}$ of points in the $\F$-orbit of $z$ entering $R$ and the subsequence $(u_n^R/s_n^R)_{n\ge 0}=(u_{N_n^R}/s_{N_n^R})_{n\ge 0}$ of Farey convergents.  Hence a subregion $R$ naturally determines a subsequence of Farey convergents.  We illustrate with the examples discussed at the end of \S\ref{The natural extension of the Farey tent map}:

\begin{eg}\label{H&Vregions}
The region $R=H_{\lambda+1}$ corresponds to {\sc rcf}-convergents ($\lambda=0$) or $\lambda^{\text{th}}$ mediants ($\lambda>0$):
\[\left\{\frac{u_n^R}{s_n^R}\right\}_{n\ge 0}=\left\{\frac{\lambda_np_{j_n}+p_{j_n-1}}{\lambda_nq_{j_n}+q_{j_n-1}}\ \Big|\ \lambda_n=\lambda\right\}_{n\ge 0}.\]
The vertical regions $R=V_a$ give---in addition to the {\sc rcf}-convergents $p_{j-1}/q_{j-1}$ for which $a_{j+1}=a$---final mediants, next-to-final mediants, and so on for $a=1,2,\dots$, respectively:
\[\left\{\frac{u_n^R}{s_n^R}\right\}_{n\ge 0}=\left\{\frac{\lambda_np_{j_n}+p_{j_n-1}}{\lambda_nq_{j_n}+q_{j_n-1}}\ \Big|\ \lambda_n=a_{j_n+1}-a\right\}_{n\ge 0}.\]
The regions $R=V_{a-\lambda}\cap H_{\lambda+1}$ pick out {\sc rcf}-convergents ($\lambda=0$) or $\lambda^\text{th}$-mediants ($\lambda>1$) corresponding to partial denominators $a$ in the {\sc rcf}-expansion of $x$: 
\[\left\{\frac{u_n^R}{s_n^R}\right\}_{n\ge 0}=\left\{\frac{\lambda_np_{j_n}+p_{j_n-1}}{\lambda_nq_{j_n}+q_{j_n-1}}\ \Big|\ \lambda_n=\lambda,\ a_{j_n+1}=a\right\}_{n\ge 0}.\]
\end{eg}

\begin{remark}\label{altered_maps}
For the reasons explained in Remark \ref{closure_remark}, some of the statements in Example \ref{H&Vregions} are false for $z=(x,1)$, where $x=[0;a_1,a_2,\dots]$ with $a_1=a(x)=1$.  For some of the examples in \S\ref{Examples of contracted Farey expansions}, it will be advantageous to `fix' this.  In such cases, we may `adjust' the map $\F_R$ so that the corresponding statements on the subsets of Farey convergents are true for all $z=(x,1)$.  For instance, when $R=H_1$ and $z=(x,1)$ with $x>1/2$, the sequence $(u_n^R/s_n^R)_{n\ge 0}$ skips the {\sc rcf}-convergent $p_0/q_0$ of $x$.  To `catch' this convergent, we instead consider the map $\overline{\F}_{H_1}:\overline{H}_1\to \overline{H}_1$ where for $z=(x,y)\in \overline{H}_1$, $\overline{\F}_{H_1}(z):=\F^{a(x)}(z)$ if $x\neq 0$ and $\overline{\F}_{H_1}(z):=z$ if $x=0$.  The maps $\overline{\F}_{H_1}$ and $\F_{H_1}$ agree on $H_1\backslash (A\cup B)$, where $A=(1/2,1]\times \{1\}$ and $B=\{0\}\times(1/2,1]$.  The dynamical systems $(\overline{H_1},\mathcal{B},\bar\mu_{\overline{H}_1},\overline{\F}_{H_1})$ and $(H_1,\mathcal{B},\bar\mu_{H_1},\F_{H_1})$ are isomorphic under the identity map and thus---from an ergodic-theoretic point of view---indistinguishable.  Moreover, the map $\overline{\F}_{H_1}$ `fixes' the subtlety in Remark \ref{closure_remark} since, for $z\in A$, $\overline{\F}_{H_1}(z)=((1-x)/x,1/2)\in \overline{H}_1$.  Thus we \emph{do} include the convergent $p_0/q_0$ in $(u_n^R/s_n^R)_{n\ge 0}$.  Throughout, we shall often consider such altered systems without mention, denoting them again by $(R,\mathcal{B},\bar\mu_R,\F_R)$.
\end{remark}

Consider again $R=H_1$, which gives as a subsequence $(u_n^R/s_n^R)_{n\ge 0}$ of Farey convergents the {\sc rcf}-convergents of $x$. 
Now $R$ consists of all points $z=(x,y)$ as in (\ref{(x,y)-rcfs}) with $b_1=1$, so---after the alteration of Remark \ref{altered_maps}---we find from (\ref{z_n-explicit}) that for $x\neq 0$, $N_R(z)=a_1=a(x)$ and 
\begin{equation}\label{F_{H_1}}
\F_R(z)=\F^{a_1}([0;a_1,a_2,\dots],[0;1,b_2,b_3,\dots])=([0;a_2,a_3,\dots],[0;1,a_1,b_2,b_3,\dots]).
\end{equation}
Notice the similarity between this induced map and the map $\mathcal{G}$ from (\ref{GaussNE}); they both act essentially as a two-sided shift on {\sc rcf}-expansions.  In fact, Brown and Yin proved in 1996 that a copy of the Gauss natural extension is found sitting (inverted, scaled and `suspended' from $y=1$) within $(\Omega,\mathcal{B},\bar\mu,\F)$ as the induced system $(R,\mathcal{B},\bar\mu_R,\F_R)$ with $R=H_1$ (Theorem 1 of \cite{BY1996}):
\begin{thm}[Brown--Yin, 1996 \cite{BY1996}]\label{H_1&Gauss}
The induced system $(R,\mathcal{B},\bar\mu_R,\F_R)$ with $R=H_1$ is isomorphic to the Gauss natural extension $(\Omega,\mathcal{B},\bar\nu_G,\mathcal{G})$. 
\end{thm}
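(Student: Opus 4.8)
The plan is to construct an explicit measurable isomorphism $\psi\colon H_1\to\Omega$ and verify the three defining properties: that $\psi$ is a bimeasurable bijection modulo null sets, that it conjugates the induced map $\F_R$ to $\mathcal{G}$, and that it transports the induced measure $\bar\mu_R$ to the Gauss measure $\bar\nu_G$. Everything hinges on guessing $\psi$ correctly. Since each $(x,y)\in H_1$ has $y\in(1/2,1]$, its {\sc rcf}-expansion begins with a $1$, say $y=[0;1,c_1,c_2,\dots]$; comparing the symbolic form \eqref{F_{H_1}} of $\F_R$ with the Gauss action \eqref{GaussNE} shows that the conjugacy should fix the first coordinate and delete this leading $1$ in the second. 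I would therefore set $\psi(x,y):=\bigl(x,\tfrac{1-y}{y}\bigr)$, observing that in the second coordinate $y\mapsto\tfrac{1-y}{y}=A_1^{-1}\cdot y$ is precisely the M\"obius map stripping off the leading digit $1$. As a preliminary step I would record that $y\mapsto 1/y-1$ is a strictly decreasing homeomorphism of $(1/2,1]$ onto $[0,1)$ with inverse $W\mapsto 1/(1+W)$, so that $\psi$ is a bimeasurable bijection of $H_1$ onto $\Omega$ away from the Lebesgue-null boundary; this realises the ``inverted, scaled and suspended from $y=1$'' copy of the Gauss system alluded to before the statement.

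Next I would verify $\psi\circ\F_R=\mathcal{G}\circ\psi$. The first coordinates agree because $N_R(z)=a(x)$ and $G$ is the jump transformation of $F$ on $(1/2,1]$, so both maps send $x$ to $G(x)$. For the second coordinate the matrix picture is cleanest: by \eqref{A_0^kA_1} one has $A_R(z)=A_{[0,a(x)]}=\left(\begin{smallmatrix}0 & 1\\ 1 & a(x)\end{smallmatrix}\right)$, which is symmetric, so $A_R^T\cdot w=1/(a(x)+w)$ is exactly the second-coordinate action of $\mathcal{G}$ in \eqref{GNE}. By \eqref{F_R2}, $\F_R$ acts on $y$ through the M\"obius map $A_1A_R^T A_1^{-1}$, while $\psi$ acts on the second coordinate through $A_1^{-1}$; hence $A_1^{-1}\bigl(A_1A_R^TA_1^{-1}\bigr)=A_R^TA_1^{-1}$, so $\psi$ intertwines the $y$-dynamics of $\F_R$ with $w\mapsto A_R^T\cdot w=1/(a(x)+w)$, as required. (Equivalently, one reads this off symbolically from \eqref{F_{H_1}} and \eqref{GaussNE}: deleting a leading $1$ commutes with prepending $a(x)$.)

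Finally, for measure-preservation I would change variables via $\psi$, writing $X=x$ and $W=\tfrac{1-y}{y}$, so $y=1/(1+W)$. The key algebraic identity is $x+y-xy=(1+XW)/(1+W)$, and the Jacobian contributes $dx\,dy=y^2\,dX\,dW=(1+W)^{-2}\,dX\,dW$. Combining these, the $\F$-invariant density $(x+y-xy)^{-2}$ pushes forward to $(1+XW)^{-2}$, which is (up to normalisation) the Gauss natural-extension density. Since $\int_\Omega (1+XW)^{-2}\,dX\,dW=\log 2$, the region $H_1$ is proper with $\bar\mu(H_1)=\log 2$, and the normalised induced measure $\bar\mu_{H_1}$ pushes forward exactly to the Gauss density $(\log 2)^{-1}(1+XW)^{-2}$, i.e.\ $\psi_*\bar\mu_{H_1}=\bar\nu_G$.

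I expect the genuine difficulty to lie not in any single computation—each falls out once $\psi$ is identified, the density identity $x+y-xy=(1+XW)/(1+W)$ being the one calculation that must work out exactly—but in the measure-theoretic bookkeeping. In particular, the symbolic description \eqref{F_{H_1}} of $\F_R$ is only valid after the boundary adjustment of Remark \ref{altered_maps} (the case $a(x)=1$, $y=1$), so I would need to confirm that $\psi$ remains a bijection up to null sets and that the conjugation survives on the altered system, and to check that the orbits excluded in the standing convention after \eqref{hit_time} form a null set on which the isomorphism may harmlessly be redefined.
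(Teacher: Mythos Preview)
Your proposal is correct and uses exactly the same isomorphism $\psi(x,y)=(x,(1-y)/y)$ as the paper; the conjugation and measure computations you outline are precisely the ones the paper carries out. The only organisational difference is that the paper first packages these computations into the general Theorem \ref{two_shift_map_and_measure} (valid for any $R$ with $s_R\equiv 1$) and then observes that for $R=H_1$ the resulting system $(\Omega_R,\mathcal{B},\bar\nu_R,\tau_R)$ is literally $(\Omega,\mathcal{B},\bar\nu_G,\G)$, whereas you perform the change-of-variables and matrix identities inline for this specific $R$.
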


Using Theorem \ref{H_1&Gauss}, one can exploit knowledge about the Gauss natural extension $(\Omega,\mathcal{B},\bar\nu_G,\mathcal{G})$ to infer properties of other induced systems $(R,\mathcal{B},\bar\mu_R,\F_R)$.  This is used, for instance, in the proof\footnote{The aforementioned $\bar\mu$-continuity condition assumed on inducible regions $R$ in \cite{DKS2023} is \emph{not} needed in the proof.} of Theorem 4.6 of \cite{DKS2023}, which states that the measure-theoretic entropy $h(\F_R)=h_{\bar\mu_R}(\F_R)$ of the induced system $(R,\mathcal{B},\bar\mu_R,\F_R)$ is inversely proportional to the $\bar\mu$-measure of $R$:

\begin{thm}[Dajani--Kraaikamp--Sanderson, 2025 \cite{DKS2023}]\label{entropy_thm}
For any proper, inducible subregion $R\subset\Omega$, 
\[h(\F_R)=\frac{\pi^2}{6\bar\mu(R)}.\]
\end{thm}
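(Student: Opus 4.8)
The plan is to show that the product $\bar\mu(R)\,h(\F_R)$ is the same for every proper inducible region $R$, and then to evaluate this constant at the single convenient region $R=H_1$, where Brown--Yin's Theorem~\ref{H_1&Gauss} reduces the computation to the classical entropy of the Gauss map. The engine throughout is Abramov's formula for induced (first-return) transformations together with the transitivity of inducing.

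First I would establish the invariance. Fix two proper inducible regions $R_1,R_2\subset\Omega$ and set $R:=R_1\cup R_2$, which again has finite positive $\bar\mu$-measure and is therefore proper. Since inducing is transitive (Kakutani), the first-return map of $\F$ to $R_1$ coincides, up to null sets, with the first-return map of the induced system $(R,\mathcal{B},\bar\mu_R,\F_R)$ to the subset $R_1\subset R$; that is, $(\F_R)_{R_1}=\F_{R_1}$. As $(R,\mathcal{B},\bar\mu_R,\F_R)$ is an ergodic probability-measure-preserving system, Abramov's formula applies and gives
\[
h(\F_{R_1})=\frac{h(\F_R)}{\bar\mu_R(R_1)}=\frac{\bar\mu(R)\,h(\F_R)}{\bar\mu(R_1)},
\]
so that $\bar\mu(R_1)\,h(\F_{R_1})=\bar\mu(R)\,h(\F_R)$; the identical argument yields $\bar\mu(R_2)\,h(\F_{R_2})=\bar\mu(R)\,h(\F_R)$. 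Hence $\bar\mu(R_1)\,h(\F_{R_1})=\bar\mu(R_2)\,h(\F_{R_2})$, and the product is a global constant $c$ independent of the chosen region.

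It remains to compute $c$ via $R=H_1$. By Theorem~\ref{H_1&Gauss} the induced system $(H_1,\mathcal{B},\bar\mu_{H_1},\F_{H_1})$ is isomorphic to the Gauss natural extension $(\Omega,\mathcal{B},\bar\nu_G,\G)$, so $h(\F_{H_1})=h(\G)$; since a natural extension shares the entropy of its base, $h(\G)=h(G)$, and by the classical value of the entropy of the Gauss map (Rokhlin's formula) $h(G)=\pi^2/(6\log 2)$. On the other hand, writing $H_1=[0,1]\times(1/2,1]$ and using the density $1/(x+y-xy)^2$ of $\bar\mu$, the substitution $u=x(1-y)+y$ in the inner integral gives $\int_0^1(x+y-xy)^{-2}\,dx=1/y$, whence
\[
\bar\mu(H_1)=\int_{1/2}^1\frac{dy}{y}=\log 2.
\]
Therefore $c=\bar\mu(H_1)\,h(\F_{H_1})=\log 2\cdot \pi^2/(6\log 2)=\pi^2/6$, and for every proper inducible $R$ we conclude $h(\F_R)=\pi^2/(6\bar\mu(R))$, as claimed.

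The main obstacle is the careful handling of the first step in the $\sigma$-finite infinite-measure setting: one must invoke Abramov's formula only for the genuine probability-measure-preserving induced systems (never for $(\Omega,\bar\mu,\F)$ itself, whose total mass is infinite), and one must justify the transitivity identity $(\F_R)_{R_1}=\F_{R_1}$ up to null sets, which rests on the conservativity and ergodicity of $(\Omega,\mathcal{B},\bar\mu,\F)$ guaranteeing that $\bar\mu$-a.e.\ orbit returns to each positive-measure set infinitely often. Equivalently, the invariance could be phrased through Krengel's entropy of the infinite-measure system, but the union trick above keeps everything within the elementary finite-measure Abramov formula and avoids appealing to that machinery.
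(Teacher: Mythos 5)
Your proof is correct and takes essentially the route the paper intends: the theorem is quoted from \cite{DKS2023}, and the surrounding text states that its proof exploits precisely the Brown--Yin isomorphism (Theorem \ref{H_1&Gauss}) to transfer the Gauss-map entropy $h(\G)=h(G)=\pi^2/(6\log 2)$ to induced systems, which---combined with an Abramov-type argument showing $\bar\mu(R)\,h(\F_R)$ is independent of $R$ and the computation $\bar\mu(H_1)=\log 2$---is exactly your argument. Your union trick with the transitivity identity $(\F_R)_{R_1}=\F_{R_1}$ correctly keeps Abramov's formula in the finite-measure setting, and all the individual steps (ergodicity of the induced systems, the inner integral $\int_0^1(x+y-xy)^{-2}\,dx=1/y$, and the equality of entropy between a natural extension and its base) check out.
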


\begin{remark}
We remark here the striking resemblance between Theorem \ref{entropy_thm}, Remark 5.10 of \cite{K1991} on the entropy of $S$-expansions and Theorem 2 of \cite{KSS2012} (conjectured in Remark 2 of \cite{LM2008}) on the entropy of $\alpha$-{\sc cf}s.  From the results of \S\ref{S-expansions, revisited} and \S\ref{Nakada's alpha-continued fractions, revisited} below, Theorem \ref{entropy_thm} may be viewed as simultaneously extending these results from \cite{K1991,KSS2012}.  
\end{remark}

%%%%%%%%%%%%%
%%%%%%%%%%%%%
%%%%%%%%%%%%%
\section{Inducing contractions of the mother of all continued fractions}\label{Inducing contractions of the mother of all continued fractions}
%%%%%%%%%%%%%
%%%%%%%%%%%%%
%%%%%%%%%%%%%

We have seen in \S\ref{Inducing Ito's natural extension} that inducible subregions $R\subset\Omega$ naturally determine subsequences $(u_n^R/s_n^R)_{n\ge 0}=(u_{N_n^R}/s_{N_n^R})_{n\ge 0}$ of the convergents of Farey expansions.  In this section, we construct new {\sc gcf}-expansions whose convergents are precisely the subsequences $(u_n^R/s_n^R)_{n\ge 0}$ (Corollary \ref{PQ_in_us_cor}).  These {\sc gcf}s arise from a general procedure described in \S\ref{Contraction} called \emph{contraction}, which---under very mild assumptions---allows one to produce from a given {\sc gcf} a new {\sc gcf} whose convergents are any desired subsequence of the original convergents.  In \S\ref{Contracted Farey expansions}, we use induced transformations of Ito's natural extension of the Farey tent map (`the mother of all continued fractions') to govern contractions of Farey expansions.  In \S\ref{A two-sided shift for contracted Farey expansions} we introduce a dynamical system---isomorphic to the induced system $(R,\mathcal{B},\bar\mu_R,\F_R)$---which acts essentially as a two-sided shift on contracted Farey expansions.

%%%%%%%%%%%%%
%%%%%%%%%%%%%
\subsection{Contraction}\label{Contraction}
%%%%%%%%%%%%%
%%%%%%%%%%%%%

Recall the singularisation procedure discussed in \S\ref{S-expansions}: beginning with a {\sc srcf}-expansion, one may (simultaneously) singularise at possibly countably many positions to produce a new {\sc srcf}-expansion whose convergents are a subsequence of the original convergents.  However, singularisation at position $n$ is subject to the condition that the partial numerator $\alpha_{n+2}$ and partial denominator $\beta_{n+1}$ are both equal to $1$.  Moreover, beginning from a {\sc rcf}-expansion, this constraint on partial denominators implies that consecutive convergents cannot be removed via singularisation.  

In this subsection, we recall an old acceleration technique of Seidel from 1855 (\cite{S1855}; see also \cite{P1950}), called \emph{contraction}, which overcomes these obstacles.  Although our main interest is in producing {\sc gcf}-expansions whose convergents are subsequences of Farey convergents, we present contraction in the general, abstract setting of {\sc gcf}s discussed in \S\ref{Generalised continued fractions}, as we feel this technique has been largely overlooked\footnote{As mentioned in the introduction (\S\ref{Introduction-ch1}), contraction is used in the analytic theory of continued fractions, but usually only for subsequences of odd or even integers (\cite{LW2008}).  See also \cite{B2014}.} and can be fruitfully applied to other continued fraction algorithms.  For the same reason, we include a proof of Seidel's theorem (Theorem \ref{convergents_thm}) below.

\begin{defn}\label{ccf}
A {\sc gcf} $[\beta_0/\alpha_0;\alpha_1/\beta_1,\alpha_2/\beta_2,\dots]$ is called \emph{contractable} if $Q_{[m+1,n]}\neq 0$ for all $0\le m\le n$.  The \textit{contracted continued fraction} ({\sc ccf}) of a contractable {\sc gcf} $[\beta_0/\alpha_0;\alpha_1/\beta_1,\alpha_2/\beta_2,\dots]$ with respect to a strictly increasing sequence of non-negative integers $(n_k)_{k\ge 0}$ is the {\sc gcf} $[\beta_0'/\alpha_0';\alpha_1'/\beta_1',\alpha_2'/\beta_2',\dots],$
where
\[\begin{pmatrix}
\alpha_{k+1}'\\
\beta_{k+1}'
\end{pmatrix}
:=
\begin{pmatrix}
-\det(B_{[n_{k-1}+2,n_k+1]})Q_{[n_{k-2}+2,n_{k-1}]}Q_{[n_k+2,n_{k+1}]}\\
Q_{[n_{k-1}+2,n_{k+1}]}
\end{pmatrix},
\qquad k\ge -1,
\]
with $n_k:=k$ for $k<0$.
\end{defn}

\begin{remark}
The requirement that a contractable {\sc gcf} satisfies $Q_{[m+1,n]}\neq 0$ for all $0\le m\le n$ guarantees that the partial numerators $\alpha_{k+1}'$ are nonzero for all $k\ge -1$, i.e., that a {\sc ccf} is in fact a {\sc gcf} as defined in \S\ref{Generalised continued fractions}.  Indeed, we have $\det (B_{[n_{k-1}+2,n_k+1]})\neq 0$ for all $k\ge -1$ as noted in \S\ref{Generalised continued fractions}, and both $Q_{[n_{k-2}+2,n_{k-1}]}\neq 0,\ k\ge 1$, and $Q_{[n_k+2,n_{k+1}]}\neq 0,\ k\ge -1,$ by assumption.  Moreover, for $k=-1$ and $k=0$, $Q_{[n_{k-2}+2,n_{k-1}]}=1\neq 0$; see \eqref{rec_rels}. 

This requirement also guarantees that the scalars $c_k$ in Theorem \ref{convergents_thm} below are non-zero.  
\end{remark}

\begin{remark}
Notice that the partial numerators $\alpha_n'$ of a {\sc ccf} in general \emph{do not} satisfy $|\alpha_n'|=1$, even if this is true of the original {\sc gcf}.  Hence contraction does not necessarily send {\sc srcf}s to {\sc srcf}s.
\end{remark}

\begin{eg}\label{ccf_eg}
We compute the {\sc ccf} with respect to $(n_k)_{k\ge 0}=(2k)_{k\ge 0}$ of $[\beta_0/\alpha_0;\alpha_1/\beta_1,\alpha_2/\beta_2,\dots]=[1/1;1/2,2/3,3/4,\dots]$, i.e., $\alpha_0=1$ and $\alpha_n=\beta_{n-1}=n$ for all $n>0$.  We first note that the recurrence relations \eqref{rec_rels} and the fact that all of the partial numerators and partial denominators are positive imply that this {\sc gcf} is in fact contractable.  From Definition \ref{ccf},
\[\begin{pmatrix}
\alpha_{0}'\\
\beta_{0}'
\end{pmatrix}
=
\begin{pmatrix}
-\det(B_{[0,0]})Q_{[-1,-2]}Q_{[1,0]}\\
Q_{[0,0]}
\end{pmatrix}
=\begin{pmatrix}
\alpha_0\cdot 1\cdot 1\\
\beta_0
\end{pmatrix}
=\begin{pmatrix}
1\\
1
\end{pmatrix},
\]
\[\begin{pmatrix}
\alpha_1'\\
\beta_1'
\end{pmatrix}
=
\begin{pmatrix}
-\det(B_{[1,1]})Q_{[0,-1]}Q_{[2,2]}\\
Q_{[1,2]}
\end{pmatrix}
=\begin{pmatrix}
\alpha_1\cdot 1\cdot \beta_2\\
\beta_2\beta_1+\alpha_2
\end{pmatrix}
=\begin{pmatrix}
1\cdot 1\cdot 3\\
3\cdot 2+2
\end{pmatrix}
=\begin{pmatrix}
3\\
8
\end{pmatrix},
\]
\[\begin{pmatrix}
\alpha_2'\\
\beta_2'
\end{pmatrix}
=
\begin{pmatrix}
-\det(B_{[2,3]})Q_{[1,0]}Q_{[4,4]}\\
Q_{[2,4]}
\end{pmatrix}
=\begin{pmatrix}
-\alpha_2\alpha_3\cdot 1\cdot \beta_4\\
\beta_4(\beta_3\beta_2+\alpha_3)+\alpha_4\beta_2
\end{pmatrix}
=\begin{pmatrix}
-2\cdot 3\cdot 1\cdot 5\\
5(4\cdot 3+3)+4\cdot 3
\end{pmatrix}
=
\begin{pmatrix}
-30\\
87
\end{pmatrix},
\]
and for $k>1$,
\begin{align*}
\begin{pmatrix}
\alpha_{k+1}'\\
\beta_{k+1}'
\end{pmatrix}
&=
\begin{pmatrix}
-\det(B_{[2k,2k+1]})Q_{[2k-2,2k-2]}Q_{[2k+2,2k+2]}\\
Q_{[2k,2k+2]}
\end{pmatrix}\\
&=\begin{pmatrix}
-\alpha_{2k}\alpha_{2k+1}\cdot \beta_{2k-2}\cdot \beta_{2k+2}\\
\beta_{2k+2}(\beta_{2k+1}\beta_{2k}+\alpha_{2k+1})+\alpha_{2k+2}\beta_{2k}
\end{pmatrix}\\
&=\begin{pmatrix}
-\left((2k)(2k+1)\right)\cdot (2k-1)\cdot (2k+3)\\
(2k+3)((2k+2)(2k+1)+(2k+1))+(2k+2)(2k+1)
\end{pmatrix}\\
&=\begin{pmatrix}
-(2k-1)(2k)(2k+1)(2k+3)\\
(2k+1)((2k+3)^2+(2k+2))
\end{pmatrix}.
\end{align*}
The first few terms of the {\sc ccf} are thus 
\[[1/1;3/8,-30/87,-420/275,-1890/623,-5544/1179,-12870/1991,-25740/3107,\dots].\]
\end{eg}

Before proving that the convergents of a {\sc ccf} are a subsequence of the original {\sc gcf}-convergents, we need the following:

\begin{lem}\label{Qs_lem}
For any {\sc gcf},
\[Q_{[m+1,n]}=\frac{Q_nP_{m-1}-P_nQ_{m-1}}{\det B_{[-1,m]}} \quad \text{for all integers $-1\le m\le n$}.\]
\end{lem}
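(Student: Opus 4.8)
The plan is to exploit the multiplicativity of the matrices $B_{[m,n]}$ together with the entry identification established in \S\ref{Generalised continued fractions}. Since $B_{[m,n]}=B_mB_{m+1}\cdots B_n$ is defined as an ordered product, the factorization $B_{[-1,n]}=B_{[-1,m]}\,B_{[m+1,n]}$ holds for all $-1\le m\le n$. Because $\det B_{[-1,m]}=(-1)^{m}\alpha_{-1}\cdots\alpha_m\neq 0$ (as noted in \S\ref{Generalised continued fractions}), the matrix $B_{[-1,m]}$ is invertible, so one may solve $B_{[m+1,n]}=B_{[-1,m]}^{-1}B_{[-1,n]}$ and simply read off its $(2,2)$-entry $Q_{[m+1,n]}$.

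Concretely, I would first recall from \S\ref{Generalised continued fractions} the identification
\[B_{[-1,m]}=\begin{pmatrix}P_{m-1} & P_m\\ Q_{m-1} & Q_m\end{pmatrix},\qquad B_{[-1,n]}=\begin{pmatrix}P_{n-1} & P_n\\ Q_{n-1} & Q_n\end{pmatrix}.\]
Next I would compute the inverse
\[B_{[-1,m]}^{-1}=\frac{1}{\det B_{[-1,m]}}\begin{pmatrix}Q_m & -P_m\\ -Q_{m-1} & P_{m-1}\end{pmatrix}\]
and form the product $B_{[-1,m]}^{-1}B_{[-1,n]}$. Its $(2,2)$-entry is
\[Q_{[m+1,n]}=\frac{-Q_{m-1}P_n+P_{m-1}Q_n}{\det B_{[-1,m]}}=\frac{Q_nP_{m-1}-P_nQ_{m-1}}{\det B_{[-1,m]}},\]
which is precisely the claimed identity. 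Equivalently, one may compare only the second columns of $B_{[-1,n]}=B_{[-1,m]}B_{[m+1,n]}$, obtaining the linear system $\begin{pmatrix}P_n\\ Q_n\end{pmatrix}=B_{[-1,m]}\begin{pmatrix}P_{[m+1,n]}\\ Q_{[m+1,n]}\end{pmatrix}$, and then apply Cramer's rule to isolate $Q_{[m+1,n]}$.

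The computation itself is routine, so the only real care is needed at the two boundary values of $m$, where the various conventions must be checked for consistency; I expect these boundary checks---rather than the main linear-algebra step---to be the only place demanding attention. When $m=n$, the left-hand side is $Q_{[n+1,n]}=1$ by the convention $(P_{[m,m-1]},Q_{[m,m-1]}):=(0,1)$, while the right-hand side reduces to $(Q_nP_{n-1}-P_nQ_{n-1})/\det B_{[-1,n]}=1$ since the numerator is exactly $\det B_{[-1,n]}$. When $m=-1$, one uses $(P_{-2},Q_{-2})=(0,1)$ and $\det B_{[-1,-1]}=-1$ to verify that the right-hand side collapses to $Q_{[0,n]}=P_n$, matching the identity $B_{[-1,n]}=B_{-1}B_{[0,n]}$ with $B_{-1}=\left(\begin{smallmatrix}0 & 1\\ 1 & 0\end{smallmatrix}\right)$ swapping rows. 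Once these cases are dispatched, the identity holds throughout the full range $-1\le m\le n$.
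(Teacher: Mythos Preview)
Your proof is correct and is essentially identical to the paper's own argument: both invert $B_{[-1,m]}$, multiply by $B_{[-1,n]}$, and read off the $(2,2)$-entry, with the case $m=n$ handled separately via the convention $Q_{[n+1,n]}=1$. Your additional check at $m=-1$ is superfluous (it is already subsumed by the general computation), but it does no harm.
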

\begin{proof}
If $m=n$, then both the left- and right-hand sides of the expression equal 1. For $m<n$, $Q_{[m+1,n]}$ is the bottom-right entry of 
\[B_{[m+1,n]}=B_{[-1,m]}^{-1}B_{[-1,n]}=\frac1{\det B_{[-1,m]}}\begin{pmatrix}Q_m & -P_m \\ -Q_{m-1} & P_{m-1}\end{pmatrix}\begin{pmatrix}P_{n-1} & P_n \\ Q_{n-1} & Q_n\end{pmatrix},\]
which is evidently $\frac{Q_nP_{m-1}-P_nQ_{m-1}}{\det B_{[-1,m]}}$.
\end{proof}

Let $[\beta_0'/\alpha_0';\alpha_1'/\beta_1',\alpha_2'/\beta_2',\dots]$ be the {\sc ccf} of a contractable {\sc gcf} $[\beta_0/\alpha_0;\alpha_1/\beta_1,\alpha_2/\beta_2,\dots]$ with respect to $(n_k)_{k\ge 0}$.  For $-1\le m\le n$, let
\[\begin{pmatrix}P'_{[m,n-1]} & P'_{[m,n]}\\ Q'_{[m,n-1]} & Q'_{[m,n]}\end{pmatrix}=B'_{[m,n]}:=B_{[m,n]}([\beta_0'/\alpha_0';\alpha_1'/\beta_1',\alpha_2'/\beta_2',\dots]),\]
and when $m=-1$, set $P_n':=P_{[-1,n]}'$ and $Q_n':=Q_{[-1,n]}'$.

\begin{thm}[Seidel, 1855 \cite{S1855}]\label{convergents_thm}
With notation as above, 
\[
\begin{pmatrix}
P_k'\\
Q_k'
\end{pmatrix}=
c_k
\begin{pmatrix}
P_{n_k}\\
Q_{n_k}
\end{pmatrix},
\quad k\ge -2, \qquad \text{where}\quad  c_k=\prod_{j=0}^{k-1} Q_{[n_{j-1}+2,n_j]},
\]
with $n_k:=k$, $k<0$, and the product defining $c_k$ set equal to $1$ for $k<1$.  In particular, the {\sc ccf} with respect to $(n_k)_{k\ge 0}$ of a contractable {\sc gcf}-expansion $x=[\beta_0/\alpha_0;\alpha_1/\beta_1,\alpha_2/\beta_2,\dots]\in\mathbb{C}$ with convergents $(P_k/Q_k)_{k\ge -1}$ is a {\sc gcf}-expansion of $x$ with convergents $(P_{n_k}/Q_{n_k})_{k\ge -1}$.
\end{thm}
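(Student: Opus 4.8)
The plan is to prove by induction the stronger matrix identity
\[
B'_{[-1,k]} = \begin{pmatrix} P_{n_{k-1}} & P_{n_k}\\ Q_{n_{k-1}} & Q_{n_k}\end{pmatrix}\begin{pmatrix} c_{k-1} & 0\\ 0 & c_k\end{pmatrix},\qquad k\ge -1,
\]
which records both columns of $B'_{[-1,k]}$ at once and immediately yields the theorem, since a column that is a nonzero scalar multiple of $(P_{n_k},Q_{n_k})^{\mathsf T}$ represents the same convergent $P_{n_k}/Q_{n_k}$. The engine of the induction is the factorisation $B'_{[-1,k]}=B'_{[-1,k-1]}B'_k$ with $B'_k=\left(\begin{smallmatrix}0 & \alpha'_k\\ 1 & \beta'_k\end{smallmatrix}\right)$. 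First I would verify the base case $k=-1$ (and, as a sanity check, $k=0$) directly from the conventions $(\alpha'_{-1},\beta'_{-1})=(1,0)$, $(P_{-2},Q_{-2})=(0,1)$, $(P_{-1},Q_{-1})=(1,0)$ and $c_{-2}=c_{-1}=c_0=1$; the $k=0$ check amounts to the two auxiliary identities $Q_{[0,n]}=P_n$ and $\alpha_0Q_{[1,n]}=Q_n$, which fall out of the factorisations $B_{[0,n]}=B_{-1}^{-1}B_{[-1,n]}$ and $B_{[1,n]}=B_0^{-1}B_{[0,n]}$.

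For the inductive step, suppose the displayed identity holds for $k-1$. Multiplying $B'_{[-1,k-1]}$ on the right by $B'_k$ carries the right column of $B'_{[-1,k-1]}$ (namely $c_{k-1}(P_{n_{k-1}},Q_{n_{k-1}})^{\mathsf T}$) into the new left column automatically, so all the content sits in the new right column
\[
\alpha'_k\,c_{k-2}\begin{pmatrix}P_{n_{k-2}}\\ Q_{n_{k-2}}\end{pmatrix}+\beta'_k\,c_{k-1}\begin{pmatrix}P_{n_{k-1}}\\ Q_{n_{k-1}}\end{pmatrix},
\]
which must be shown equal to $c_k(P_{n_k},Q_{n_k})^{\mathsf T}$. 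Inserting the Definition \ref{ccf} formulas for $\alpha'_k,\beta'_k$ (with the index shifted by one), and using the telescoping relations $c_{k-1}=c_{k-2}\,Q_{[n_{k-3}+2,n_{k-2}]}$ and $c_k=c_{k-1}\,Q_{[n_{k-2}+2,n_{k-1}]}$ to cancel the scalars $c_\bullet$, the whole step reduces---on writing $a=n_{k-2}$, $b=n_{k-1}$, $c=n_k$ and $V_i=(P_i,Q_i)^{\mathsf T}$---to the single vector identity
\[
Q_{[a+2,b]}\,V_c=-\det\!\big(B_{[a+2,b+1]}\big)\,Q_{[b+2,c]}\,V_a+Q_{[a+2,c]}\,V_b,
\]
a statement purely about the convergents of the original {\sc gcf}.

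To establish this identity I would test it against the linearly independent pair $V_a,V_b$: for each side I form the two $2\times2$ determinants obtained by taking that side together with $V_b$, respectively together with $V_a$, as the columns of a matrix. (The pair $V_a,V_b$ is independent because the determinant of $(V_a\mid V_b)$ is $Q_{[a+2,b]}\det B_{[-1,a+1]}\neq0$.) Any such determinant, $Q_jP_i-P_jQ_i$ for columns $V_i,V_j$, is evaluated by Lemma \ref{Qs_lem} as $Q_{[i+2,j]}\det B_{[-1,i+1]}$; the test against $V_b$ then collapses to the multiplicativity $\det B_{[-1,b+1]}=\det(B_{[a+2,b+1]})\cdot\det B_{[-1,a+1]}$ coming from $B_{[-1,b+1]}=B_{[-1,a+1]}B_{[a+2,b+1]}$, while the test against $V_a$ reduces to a tautology. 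The main obstacle I anticipate is exactly this reduction: performing the substitution of the telescoping scalars and the index-shifted Definition \ref{ccf} formulas without sign or off-by-one slips, and recognising that the resulting vector identity is far cleaner to verify through these determinant pairings than by a head-on expansion of the recurrences \eqref{rec_rels}. Once the identity is secured, the induction---and with it the theorem---follows.
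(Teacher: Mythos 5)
Your proposal is correct and takes essentially the same route as the paper's proof: an induction on $k$ driven by the recurrence relations \eqref{rec_rels_classic} applied to $B'_{[-1,k]}=B'_{[-1,k-1]}B'_k$, with Lemma \ref{Qs_lem} and the multiplicativity $\det B_{[-1,b+1]}=\det B_{[-1,a+1]}\det B_{[a+2,b+1]}$ carrying the inductive step, and with $c_k\neq 0$ (from contractability) yielding the convergents statement.  The only difference is presentational: where the paper verifies the key two-term identity by substituting Lemma \ref{Qs_lem} and expanding the cross terms directly, you verify the equivalent vector identity by pairing with determinants against the linearly independent columns $(P_{n_{k-2}},Q_{n_{k-2}})^{\mathsf T}$ and $(P_{n_{k-1}},Q_{n_{k-1}})^{\mathsf T}$ --- a tidy repackaging of the same computation, including the same base-case conventions $c_{-2}=c_{-1}=c_0=1$ and the $m=-1$ instances of the lemma.
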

\begin{proof}
The proof of the first statement is by induction on $k$.  The statement trivially holds for $k<0$.  Now let $k+1\ge 0$ and suppose the statement is true for $k$ and $k-1$.  By the recurrence relations (\ref{rec_rels_classic}) and Definition \ref{ccf}---letting $U$ represent either $P$ or $Q$---we compute
\begin{align*}
U_{k+1}'=&\beta_{k+1}'U_{k}'+\alpha_{k+1}'U_{k-1}'\\
=&Q_{[n_{k-1}+2,n_{k+1}]}c_kU_{n_k}-\det(B_{[n_{k-1}+2,n_k+1]})Q_{[n_{k-2}+2,n_{k-1}]}Q_{[n_k+2,n_{k+1}]}c_{k-1}U_{n_{k-1}}\\
=&c_k\left(Q_{[n_{k-1}+2,n_{k+1}]}U_{n_k}-\det(B_{[n_{k-1}+2,n_k+1]})Q_{[n_k+2,n_{k+1}]}U_{n_{k-1}}\right).
\end{align*}
By Lemma \ref{Qs_lem},
\[Q_{[n_{k-1}+2,n_{k+1}]}=\frac{Q_{n_{k+1}}P_{n_{k-1}}-P_{n_{k+1}}Q_{n_{k-1}}}{\det B_{[-1,n_{k-1}+1]}} \qquad \text{and} \qquad Q_{[n_k+2,n_{k+1}]}=\frac{Q_{n_{k+1}}P_{n_k}-P_{n_{k+1}}Q_{n_k}}{\det B_{[-1,n_k+1]}},\]
so the above computation gives
\begin{align*}
U_{k+1}'=&\frac{c_k\left((Q_{n_{k+1}}P_{n_{k-1}}-P_{n_{k+1}}Q_{n_{k-1}})U_{n_k}-(Q_{n_{k+1}}P_{n_k}-P_{n_{k+1}}Q_{n_k})U_{n_{k-1}}\right)}{\det B_{[-1,n_{k-1}+1]}}.
\end{align*}
For both $U=P$ and $U=Q$, the numerator of the previous expression simplifies to 
\[c_k(Q_{n_k}P_{n_{k-1}}-P_{n_k}Q_{n_{k-1}})U_{n_{k+1}}.\]
Using Lemma \ref{Qs_lem} once more, we have
\[U_{k+1}'=c_k\frac{Q_{n_k}P_{n_{k-1}}-P_{n_k}Q_{n_{k-1}}}{\det B_{[-1,n_{k-1}+1]}}U_{n_{k+1}}=c_k Q_{[n_{k-1}+2,n_k]}U_{n_{k+1}}=c_{k+1}U_{n_{k+1}}.
\]

This proves the first statement.  The second statement follows from the first, since for a contractable {\sc gcf}, $Q_{[n_{j-1}+2,n_j]}\neq 0$ for all $j\ge 0$ implies $c_k\neq 0$ for all $k\ge -1$, and $x=\lim_{k\to\infty}\frac{P_k}{Q_k}=\lim_{k\to\infty}\frac{P_{n_k}}{Q_{n_k}}.$
\end{proof}

\begin{eg}
Continuing with Example \ref{ccf_eg}, for $[\beta_0/\alpha_0;\alpha_1/\beta_1,\alpha_2/\beta_2,\dots]=[1/1;1/2,2/3,3/4,\dots]$ one computes
{\small
\[\left(\begin{pmatrix}
P_n\\
Q_n
\end{pmatrix}\right)_{n\ge 0}=\left(\begin{pmatrix}
1\\
1
\end{pmatrix},
\begin{pmatrix}
3\\
2
\end{pmatrix},
\begin{pmatrix}
11\\
8
\end{pmatrix},
\begin{pmatrix}
53\\
38
\end{pmatrix},
\begin{pmatrix}
309\\
222
\end{pmatrix},
\begin{pmatrix}
2119\\
1522
\end{pmatrix},
\begin{pmatrix}
16687\\
11986
\end{pmatrix},
\begin{pmatrix}
148329\\
106542
\end{pmatrix},
\begin{pmatrix}
1468457\\
1054766
\end{pmatrix},
\begin{pmatrix}
16019531\\
11506538
\end{pmatrix},\dots
\right).\]
}
For the {\sc ccf} 
{\small
\[[\beta_0'/\alpha_0';\alpha_1'/\beta_1',\alpha_2'/\beta_2',\dots]=[1/1;3/8,-30/87,-420/275,-1890/623,-5544/1179,-12870/1991,-25740/3107,\dots]\]
}
of $[1/1;1/2,2/3,3/4,\dots]$ with respect to $(2k)_{k\ge 0}$, we find
{\small
\[\left(\begin{pmatrix}
P_k'\\
Q_k'
\end{pmatrix}\right)_{k\ge 0}=\left(\begin{pmatrix}
1\\
1
\end{pmatrix},\begin{pmatrix}
11\\
8
\end{pmatrix},
3\begin{pmatrix}
309\\
222
\end{pmatrix},
3\cdot 5\begin{pmatrix}
16687\\
11986
\end{pmatrix},
3\cdot 5\cdot 7\begin{pmatrix}
1468457\\
1054766
\end{pmatrix},
3\cdot 5\cdot 7\cdot 9\begin{pmatrix}
190899411\\
137119578
\end{pmatrix},
\dots
\right).\]
}
As fractions, $P_k'/Q_k'=P_{2k}/Q_{2k}$ for $k\ge 0$.
\end{eg}

%%%%%%%%%%%%%
%%%%%%%%%%%%%
\subsection{Contracted Farey expansions}\label{Contracted Farey expansions}
%%%%%%%%%%%%%
%%%%%%%%%%%%%

Throughout this subsection, $R\subset\Omega$ is an inducible subregion and $z=(x,y)\in\Omega$ with $x$ irrational.  Using notation from \S\ref{Generalised continued fractions} and \S\ref{Farey expansions and Farey convergents}, let $[\beta_0/\alpha_0;\alpha_1/\beta_1,\alpha_2/\beta_2,\dots]$ denote the Farey expansion of $x$, i.e., $\alpha_0=1$, $\beta_0=1-\varepsilon_1$, and for $n>0$, $\alpha_n=2\varepsilon_n-1$ and $\beta_n=2-\varepsilon_{n+1}$; see \eqref{farey_expn}.  Below, we shall perform contraction on Farey expansions, but first we must show that Farey expansions are in fact contractable.

\begin{prop}
The Farey expansion of an irrational $x\in(0,1)$ is contractable.  
\end{prop}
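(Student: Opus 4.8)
The plan is to verify Definition~\ref{ccf} directly, i.e.\ to show that $Q_{[m+1,n]}\neq 0$ for every pair of integers $0\le m\le n$. Two extreme cases are immediate from the recurrence relations \eqref{rec_rels} and Lemma~\ref{Qs_lem}: when $n=m$ we have $Q_{[m+1,m]}=1$, and when $m=0$, Lemma~\ref{Qs_lem} gives $Q_{[1,n]}=Q_n$, which is positive because $Q_n=s_{n+1}$ is the denominator of a Farey convergent and $s_k=\lambda_kq_{j_k}+q_{j_k-1}\ge 1$ for all $k\ge 1$ by \eqref{FLseq}. So the real content lies in the range $1\le m<n$.

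For this range I would combine Lemma~\ref{Qs_lem},
\[Q_{[m+1,n]}=\frac{Q_nP_{m-1}-P_nQ_{m-1}}{\det B_{[-1,m]}},\]
with the fact that every partial numerator of a Farey expansion is $\pm 1$: indeed $\alpha_{-1}=\alpha_0=1$ and $\alpha_j=2\varepsilon_j-1\in\{-1,1\}$ for $j\ge 1$, so that $\det B_{[-1,m]}=(-1)^{m}\alpha_{-1}\alpha_0\cdots\alpha_m=\pm 1$. Hence
\[Q_{[m+1,n]}=\pm\bigl(Q_nP_{m-1}-P_nQ_{m-1}\bigr)=\pm\,Q_nQ_{m-1}\Bigl(\tfrac{P_{m-1}}{Q_{m-1}}-\tfrac{P_n}{Q_n}\Bigr),\]
where $Q_{m-1}=s_m>0$ and $Q_n=s_{n+1}>0$. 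The entire question thus reduces to showing that the two Farey convergents $P_{m-1}/Q_{m-1}$ and $P_n/Q_n$, which carry distinct indices $0\le m-1<n$, are distinct in value.

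The distinctness of distinct-index Farey convergents is the crux, and I expect it to be the one step requiring genuine (if brief) justification. By Proposition~\ref{FareyConvs} and \eqref{FLseq}, the convergents $P_k/Q_k=u_{k+1}/s_{k+1}$ are precisely the {\sc rcf}-convergents and mediants of $x$. The monotonicity relations \eqref{meds-mon-odd} and \eqref{meds-mon-even} show that those Farey convergents lying below $x$ chain together into a single strictly increasing sequence converging to $x$, while those lying above $x$ form a single strictly decreasing sequence converging to $x$; since $x$ is irrational, no Farey convergent equals $x$. Consequently the two one-sided families are disjoint and each is injective, so the full sequence of Farey convergents is pairwise distinct. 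In particular $P_{m-1}/Q_{m-1}\neq P_n/Q_n$, whence $Q_{[m+1,n]}\neq 0$, completing the verification.

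As an alternative to the final step one could instead prove $Q_{[m+1,n]}>0$ by induction on $n$ using the recurrence $Q_{[m+1,n+1]}=\beta_{n+1}Q_{[m+1,n]}+\alpha_{n+1}Q_{[m+1,n-1]}$ together with the structural implication—special to Farey expansions—that $\alpha_k=-1$ forces $\beta_{k-1}=2$. However, this induction is delicate precisely because $Q_{[m+1,n]}$ need not be monotone in $n$ (the text already records the analogous non-monotonicity of the $s_n$), so I prefer the cleaner reduction via Lemma~\ref{Qs_lem} and the monotonicity of the one-sided families of Farey convergents.
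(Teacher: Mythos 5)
Your proof is correct and follows essentially the same route as the paper's: reducing $Q_{[m+1,n]}\neq 0$ via Lemma~\ref{Qs_lem} to the distinctness of the Farey convergents $P_{m-1}/Q_{m-1}$ and $P_n/Q_n$, which by Proposition~\ref{FareyConvs} are {\sc rcf}-convergents and mediants and hence pairwise distinct by the monotonicity relations \eqref{meds-mon-odd} and \eqref{meds-mon-even}. You merely spell out more carefully what the paper compresses into one line (the boundary cases $m=n$ and $m=0$, the sign of $\det B_{[-1,m]}$, and the one-sided monotone-family argument for distinctness), all of which checks out.
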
  
\begin{proof}
We must show that $Q_{[m+1,n]}\neq 0$ for any $0\le m\le n$.  By Lemma \ref{Qs_lem}, this is equivalent to $Q_nP_{m-1}\neq P_nQ_{m-1}$ for all $0\le m\le n$.  By Proposition \ref{FareyConvs}, the Farey convergents $(P_j/Q_j)_{j\ge -1}=(u_j/s_j)_{j\ge 0}$ are all {\sc rcf}-convergents and mediants, which are distinct by \eqref{meds-mon-odd} and \eqref{meds-mon-even}.  
\end{proof}

Recall the definition of $N_n^R=N_n^R(z)$ from \eqref{N_n^R}.

\begin{defn}\label{cfe}
The \textit{contracted Farey expansion} of $x$ with respect to $R$ and $z=(x,y)$, denoted 
\[[\beta_0^R/\alpha_0^R;\alpha_1^R/\beta_1^R,\alpha_2^R/\beta_2^R,\dots]=[\beta_0^R(z)/\alpha_0^R(z);\alpha_1^R(z)/\beta_1^R(z),\alpha_2^R(z)/\beta_2^R(z),\dots],\]
is the {\sc ccf} of the Farey expansion of $x$ with respect to $(n_k)_{k\ge 0}$, where $n_k:=N_{k+1}^R-1,\ k\ge 0$.  If $z=(x,1)$, we call this the \emph{contracted Farey expansion} of $x$ with respect to $R$.
\end{defn}
\begin{remark}
Using \eqref{z_n-explicit}, one can show that for any $z=(x,y)$ and $z'=(x',y')$ in $\Omega$ with $x=x'$, $|\F^n(z)-\F^n(z')|\to 0$.  Using this and the fact that $(\Omega,\mathcal{B},\bar\mu,\F)$ is conservative and ergodic (\cite{BY1996}), one finds that for any inducible $R\subset\Omega$ with $\bar\mu(\text{int}(R))>0$, the forward $\F$-orbit of $(x,1)$ enters $R$ infinitely often for Lebesgue--a.e.~$x\in (0,1)$; see Remark 4.3 of \cite{DKS2023}.  Hence, for such $R$, the contracted Farey expansion of $x$ with respect to $R$ exists for Lebesgue--a.e.~$x\in (0,1)$. 
\end{remark}

In order to study contracted Farey expansions using the dynamics of $\F_R$, we wish to understand these expansions and their convergents in terms of entries from the matrices $A_R(z)$ and $A_{[m,n]}^R(z)$ from \eqref{A_R(z)} and \eqref{A_{[m,n]}^R} rather than $B_{[m,n]}$.  To this end, we begin with a lemma.  Let $z_n^R=\F_R^n(z)$ and recall that $s_R(z)$ and $s_n^R(z)$ denote the bottom-left entries of the matrices $A_R(z)$ and $A_{[0,n]}^R(z)$, respectively; see \eqref{A_R(z)} and \eqref{A_n^R(z)}.
\begin{lem}\label{Qs_and_ss}
For any $z\in\Omega$ and $0\le j<k,$ one has $Q_{[N_j^R+1,N_k^R-1]}=s_{k-j}^R(z_j^R)$.  In particular, if $k=j+1$, then $Q_{[N_j^R+1,N_{j+1}^R-1]}=s_R(z_j^R)$.
\end{lem}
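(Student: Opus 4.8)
The statement to prove is Lemma~\ref{Qs_and_ss}: for $z\in\Omega$ and $0\le j<k$, one has $Q_{[N_j^R+1,N_k^R-1]}=s_{k-j}^R(z_j^R)$. My plan is to translate both sides into the language of the underlying Farey convergents $(u_n/s_n)_{n\ge 0}$ and the quantities $P_n,Q_n$ from the Farey expansion, where an explicit formula for the $Q$-blocks is available via Lemma~\ref{Qs_lem}, and then match them up.

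\textbf{First steps.} First I would apply Lemma~\ref{Qs_lem} to the left-hand side. Setting $m=N_j^R$ and $n=N_k^R-1$ (both of which satisfy $-1\le m\le n$ since $j<k$ forces $N_j^R<N_k^R$), Lemma~\ref{Qs_lem} gives
\[
Q_{[N_j^R+1,N_k^R-1]}=\frac{Q_{N_k^R-1}P_{N_j^R-1}-P_{N_k^R-1}Q_{N_j^R-1}}{\det B_{[-1,N_j^R]}}.
\]
Next I would rewrite the numerator using Proposition~\ref{FareyConvs}, which identifies the Farey convergents $(P_{n-1},Q_{n-1})$ with the vectors $(u_n,s_n)$ generated by the Farey tent map. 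Thus $P_{N_k^R-1}=u_{N_k^R}$, $Q_{N_k^R-1}=s_{N_k^R}$, and similarly $P_{N_j^R-1}=u_{N_j^R}$, $Q_{N_j^R-1}=s_{N_j^R}$. Since the Farey expansion is a {\sc srcf} with $|\alpha_{-1}\cdots\alpha_n|=1$, the determinant $\det B_{[-1,N_j^R]}$ is $\pm1$, so up to sign the left-hand side becomes $s_{N_k^R}u_{N_j^R}-u_{N_k^R}s_{N_j^R}$, which is (again up to sign) the determinant of the matrix whose columns are the $N_j^R$-th and $N_k^R$-th convergent vectors of the Farey expansion.

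\textbf{Matching with the right-hand side.} For the right-hand side, I would use the relabelling identities~\eqref{A_m(z_n)=A_{m+n}(z)} and~\eqref{A_{[m,n]}^R} together with~\eqref{A_n^R_entries} to express $s_{k-j}^R(z_j^R)$ in terms of the global quantities. The key observation is that $A_{[0,k-j]}^R(z_j^R)$, the matrix whose bottom-left entry is $s_{k-j}^R(z_j^R)$, should factor as $\big(A_{[0,j]}^R(z)\big)^{-1}A_{[0,k]}^R(z)$ — this is the cocycle/telescoping property of the blocks $A_n^R$, analogous to the factorisation $B_{[m+1,n]}=B_{[-1,m]}^{-1}B_{[-1,n]}$ used in the proof of Lemma~\ref{Qs_lem}. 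Extracting the bottom-left entry of this product, using the entry formulas from~\eqref{supr_not} and~\eqref{A_n^R_entries}, and invoking $\det A_{[0,j]}^R=\pm1$ (the $A_\varepsilon$ have determinant $\pm1$), should produce exactly the same determinant expression $s_{N_k^R}u_{N_j^R}-u_{N_k^R}s_{N_j^R}$ up to sign.

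\textbf{The main obstacle.} The delicate point — and where I would spend the most care — is bookkeeping the signs and the precise shift by one in the indices. The off-by-one shifts between $N_n^R-1$ and $N_n^R$, and between the $(n-1)$-st Farey convergent and the $n$-th iterate (Proposition~\ref{FareyConvs}), must be tracked exactly so that the two determinant expressions coincide \emph{with the correct sign}, not merely up to sign. The cleanest route may be to avoid Lemma~\ref{Qs_lem} on the right-hand side entirely and instead argue directly from the matrix factorisation $A_{[0,k-j]}^R(z_j^R)=\big(A_{[0,j]}^R(z)\big)^{-1}A_{[0,k]}^R(z)$, reading off the bottom-left entry and simplifying with $\det A_{[0,j]}^R(z)=\pm1$; this keeps everything inside the $A$-formalism and makes the final comparison with the $B$-side (via Lemma~\ref{Qs_lem} and Proposition~\ref{FareyConvs}) a single clean identification. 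The special statement for $k=j+1$ is then immediate, since $A_{[0,1]}^R(z_j^R)=A_R(z_j^R)$ and its bottom-left entry is by definition $s_R(z_j^R)$.
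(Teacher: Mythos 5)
Your proposal follows essentially the same route as the paper's proof: Lemma~\ref{Qs_lem} plus Proposition~\ref{FareyConvs} to rewrite the left-hand side, and the telescoping factorisation $A_{[0,k-j]}^R(z_j^R)=\bigl(A_{[0,j]}^R\bigr)^{-1}A_{[0,k]}^R$ with the bottom-left entry read off for the right-hand side, the $k=j+1$ case then following from $A_{[0,1]}^R(z_j^R)=A_R(z_j^R)$ exactly as you say. The sign bookkeeping you flag as the main obstacle is settled in the paper not by an ``up to sign'' argument but by the exact identity $\det A_{[0,n]}=\prod_{j=1}^{n}(1-2\varepsilon_j)=\det B_{[-1,n]}$, so the denominators $\det B_{[-1,N_j^R]}$ and $\det A_{[0,j]}^R$ in the two expressions literally coincide and no sign ambiguity ever arises.
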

\begin{proof}
First, notice that for any $n>0$,
\begin{equation}\label{detA=detB}
\det A_{[0,n]}=\det\left(A_{\varepsilon_1}\cdots A_{\varepsilon_n}\right)=\prod_{j=1}^{n}(1-2\varepsilon_j)=\det (B_{-1}B_0\cdots B_n)=\det B_{[-1,n]},
\end{equation}
and equality of the left- and right-hand sides also holds for $n=0$ since both sides equal $1$.  Then by Lemma \ref{Qs_lem}, Proposition \ref{FareyConvs}, and Equations \eqref{supr_not} and \eqref{A_n^R_entries},
\[Q_{[N_j^R+1,N_k^R-1]}=\frac{Q_{N_k^R-1}P_{N_j^R-1}-P_{N_k^R-1}Q_{N_j^R-1}}{\det B_{[-1,N_j^R]}}=\frac{s_{N_k^R}u_{N_j^R}-u_{N_k^R}s_{N_j^R}}{\det A_{[0,N_j^R]}}=\frac{s_k^Ru_j^R-u_k^Rs_j^R}{\det A_{[0,j]}^R},\]
where $A_{[0,j]}^R=A_{[0,j]}^R(z)$.  For the first statement, it suffices to show that the right-hand side of the previous line equals the bottom-left entry of $A_{[0,k-j]}^R(z_j^R)$.  From (\ref{A_m(z_n)=A_{m+n}(z)}), (\ref{A_{[m,n]}^R}) and (\ref{supr_not}), we have
\begin{align*}
A_{[0,k-j]}^R(z_j^R)=&A_1^R(z_j^R)A_2^R(z_j^R)\cdots A_{k-j}^R(z_j^R)\\
=&A_{j+1}^R(z)A_{j+2}^R(z)\cdots A_k^R(z)\\
=&\left(A_{[0,j]}^{R}\right)^{-1}A_{[0,k]}^R=\frac1{\det A_{[0,j]}^R}\begin{pmatrix}
r_j^R & -t_j^R\\
-s_j^R & u_j^R
\end{pmatrix}\begin{pmatrix}
u_k^R & t_k^R\\
s_k^R & r_k^R
\end{pmatrix},
\end{align*}
and the first statement follows.  The second statement follows immediately from the first and the fact that $A_{[0,1]}^R(z_j^R)=A_1^R(z_j^R)=A_R(z_j^R)$; see \eqref{A_m(z_n)=A_{m+n}(z)} and \eqref{A_{[m,n]}^R}.
\end{proof}

Now let $[\beta_0^R/\alpha_0^R; \alpha_1^R/\beta_1^R,\alpha_2^R/\beta_2^R,\dots]$ be the contracted Farey expansion of $x$ with respect to $R$ and $z$, and for $-1\le m\le n$, let 
\begin{equation}\label{B_{[m,n]}^R}
\begin{pmatrix}
P_{[m,n-1]}^R & P_{[m,n]}^R\\
Q_{[m,n-1]}^R & Q_{[m,n]}^R
\end{pmatrix}
=B_{[m,n]}^R:=B_{[m,n]}([\beta_0^R/\alpha_0^R; \alpha_1^R/\beta_1^R,\alpha_2^R/\beta_2^R,\dots])
\end{equation}
and
\begin{equation}\label{P_n^R&Q_n^R}
P_n^R:=P_{[-1,n]}^R, \qquad Q_n^R:=Q_{[-1,n]}^R. 
\end{equation}
Then Theorem \ref{convergents_thm}, Proposition \ref{FareyConvs} and Lemma \ref{Qs_and_ss} imply:
\begin{cor}\label{PQ_in_us_cor}
With notation as above, 
\[
\begin{pmatrix}
P_k^R\\
Q_k^R
\end{pmatrix}
=c_k^R\begin{pmatrix}
u_{k+1}^R\\
s_{k+1}^R
\end{pmatrix}, \quad k\ge -1, \qquad \text{where} \quad c_k^R=\prod_{j=0}^{k-1} s_R(z_j^R),\]
with $c_k^R=1$ for $k<1$.  In particular, the contracted Farey expansion of $x$ with respect to $R$ and $z=(x,y)$ has convergents $(u_k^R/s_k^R)_{k
\ge 0}.$
\end{cor}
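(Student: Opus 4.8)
The plan is to read the claim off directly from Seidel's theorem, translating its output into the induced-matrix notation. Since the argument is entirely a matter of matching indices, I would proceed in three short steps, all of which invoke results stated above.

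First I would apply Theorem~\ref{convergents_thm} to the Farey expansion of $x$, which is contractable by the preceding proposition, with the index sequence $n_k = N_{k+1}^R - 1$ prescribed in Definition~\ref{cfe}. The theorem gives at once
\[
\begin{pmatrix} P_k^R \\ Q_k^R \end{pmatrix}
= c_k \begin{pmatrix} P_{n_k} \\ Q_{n_k}\end{pmatrix},
\qquad
c_k = \prod_{j=0}^{k-1} Q_{[n_{j-1}+2,\,n_j]},
\]
where $P_{n_k}/Q_{n_k}$ are Farey convergents of $x$ and $n_{-1}=-1$ by convention. It then remains only to rewrite the right-hand column and the scalar $c_k$.

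For the column, Proposition~\ref{FareyConvs} gives $(u_n,s_n)^T = (P_{n-1},Q_{n-1})^T$, so taking $n = n_k+1$ turns the column into $(u_{n_k+1},s_{n_k+1})^T$. Because $n_k+1 = N_{k+1}^R$ and \eqref{A_n^R_entries} records that $u_m^R = u_{N_m^R}$ and $s_m^R = s_{N_m^R}$, this is exactly $(u_{k+1}^R, s_{k+1}^R)^T$. For the scalar, I would identify each factor $Q_{[n_{j-1}+2,\,n_j]}$ with $s_R(z_j^R)$: substituting $n_{j-1} = N_j^R - 1$ and $n_j = N_{j+1}^R - 1$ (valid for $j \ge 1$) into Lemma~\ref{Qs_and_ss} in the case $k=j+1$ yields $Q_{[N_j^R+1,\,N_{j+1}^R-1]} = s_R(z_j^R)$; the boundary factor $j=0$ is the same computation once one uses $n_{-1}=-1$ together with $N_0^R = 0$, so that $n_{-1}+2 = 1 = N_0^R + 1$. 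Hence $c_k = \prod_{j=0}^{k-1} s_R(z_j^R) = c_k^R$ (the empty product giving $c_k^R = 1$ for $k<1$), and the vector identity is proved.

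Finally, the ``in particular'' statement follows by passing from vectors to fractions: by Lemma~\ref{s_R>0} each $s_R(z_j^R) > 0$, so $c_k^R \neq 0$ and $P_k^R/Q_k^R = u_{k+1}^R/s_{k+1}^R$ for every $k \ge -1$, which as $k$ ranges over $k\ge -1$ yields the convergents $(u_k^R/s_k^R)_{k\ge 0}$. I expect the only delicate point to be the index bookkeeping --- keeping the offset $+1$ from Proposition~\ref{FareyConvs}, the offset $+2$ inside the blocks $Q_{[\cdot,\cdot]}$, and the negative-index conventions $n_{-1}=-1$ and $N_0^R=0$ mutually consistent so that the $j=0$ term lands correctly. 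There is no analytic content beyond what the three cited results already supply.
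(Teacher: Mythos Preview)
Your proposal is correct and follows exactly the route the paper takes: the paper's proof is a one-line invocation of Theorem~\ref{convergents_thm}, Proposition~\ref{FareyConvs}, and Lemma~\ref{Qs_and_ss}, and you have simply spelled out the index bookkeeping that makes these three results fit together. Your additional remark that $c_k^R\neq 0$ follows from Lemma~\ref{s_R>0} is a useful detail the paper leaves implicit.
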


Corollary \ref{PQ_in_us_cor} describes the convergents of a contracted Farey expansion in terms of the entries of $A_{[0,n]}^R$ (see \eqref{supr_not}).  Proposition \ref{cfe_alt} below gives an alternative description of the partial numerators and partial denominators $\alpha_k^R,\ \beta_k^R$ in terms of entries of the matrices $A_R(z)$ (see (\ref{A_R(z)})).  For this, we introduce three integer-valued maps on $\Omega$: let $d_R, \alpha_R,\beta_R: \Omega\to\mathbb{Z}$ be defined for $z\in\Omega$ by
\[d_R(z):=\begin{cases}
s_R(\F_R^{-1}(z)), & \text{if $\F_R^{-1}(z)$ is defined},\\
1, & \text{otherwise},
\end{cases}\]
\begin{equation}\label{alpha_R}
\alpha_R(z):=-\det(A_R(z))d_R(z)s_R(\F_R(z))
\end{equation}
and
\begin{equation}\label{beta_R}
\beta_R(z):=s_R(z)u_R(\F_R(z))+r_R(z)s_R(\F_R(z)).
\end{equation}

\begin{remark}\label{d_R(x,1)=1}
Notice that Lemma \ref{s_R>0} implies $d_R(z)$ is a positive integer for any $z$.  We claim that $d_R(z)=1$ whenever $z=(x,1)$.  By definition of $\F$ (Equation (\ref{FareyNEeqn})), $\F^{-1}(z)\in (1/2,1]\times\{0\}$, and $\F^{-n}(z)\in[0,1/2]\times\{0\}$ for $n>1$.  If $\F_R^{-1}(z)$ is not defined (e.g., if $R$ does not intersect the line $[0,1]\times \{0\}$), then $d_R(z)=1$ by definition.  Otherwise, $\F_R^{-1}(z)=\F^{-n}(z)$ for some $n\ge 1$, and
\[A_R(\F_R^{-1}(z))=A_0^{n-1}A_1=\begin{pmatrix}0 & 1\\ 1 & n\end{pmatrix}\]
implies $d_R(z)=s_R(\F_R^{-1}(z))=1$.  In either case, $d_R(z)=1$ for $z=(x,1)$ as claimed.

One motivation for defining $d_R(z)$ as above lies in the second statement of the following proposition: when $d_R(z)=1$, the notation simplifies and we need not consider the index $1$ partial numerator $\alpha_1^R$ as a separate case.  
\end{remark}

\begin{prop}\label{cfe_alt}
The digits of the contracted Farey expansion of $x$ with respect to $R$ and $z=(x,y)$ are given by
\[
\begin{pmatrix}
\alpha_0^R\\
\beta_0^R
\end{pmatrix}
=
\begin{pmatrix}
s_R(z_0^R)\\
u_R(z_0^R)
\end{pmatrix},
\qquad
\begin{pmatrix}
\alpha_1^R\\
\beta_1^R
\end{pmatrix}
=
\begin{pmatrix}
\alpha_R(z_0^R)/d_R(z_0^R)\\
\beta_R(z_0^R)
\end{pmatrix}
\qquad
\text{and}
\qquad
\begin{pmatrix}
\alpha_{k+1}^R\\
\beta_{k+1}^R
\end{pmatrix}
=
\begin{pmatrix}
\alpha_R(z_k^R)\\
\beta_R(z_k^R)
\end{pmatrix}, 
\quad k> 0.
\]
When $d_R(z)=1$ (e.g., when $z=(x,1)$ by Remark \ref{d_R(x,1)=1}), this becomes
\[
\begin{pmatrix}
\alpha_0^R\\
\beta_0^R
\end{pmatrix}
=
\begin{pmatrix}
s_R(z_0^R)\\
u_R(z_0^R)
\end{pmatrix}
\qquad
\text{and}
\qquad
\begin{pmatrix}
\alpha_{k+1}^R\\
\beta_{k+1}^R
\end{pmatrix}
=
\begin{pmatrix}
\alpha_R(z_k^R)\\
\beta_R(z_k^R)
\end{pmatrix}, 
\quad k\ge 0.
\]
\end{prop}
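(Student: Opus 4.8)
The plan is to unwind Definition \ref{cfe}: the contracted Farey expansion is by definition the {\sc ccf} of the Farey expansion of $x$ with respect to the sequence $n_k = N_{k+1}^R - 1$ ($k \geq 0$), with $n_k := k$ for $k < 0$. So I would begin from the digit formula of Definition \ref{ccf} and rewrite each of its blocks---$\det(B_{[n_{k-1}+2,\,n_k+1]})$, $Q_{[n_{k-2}+2,\,n_{k-1}]}$, $Q_{[n_k+2,\,n_{k+1}]}$ and $Q_{[n_{k-1}+2,\,n_{k+1}]}$---in terms of the induced matrices $A_R(z_k^R)$. The governing dictionary is $n_{k-1}+2 = N_k^R+1$, $n_k+1 = N_{k+1}^R$, $n_{k-2}+2 = N_{k-1}^R+1$ and $n_{k+1} = N_{k+2}^R-1$, valid for large $k$; the two small indices $k=-1,0$ require the convention $n_k=k$ and the endpoint value $N_0^R=0$.

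For the partial denominators I would treat all $k \geq 0$ at once. The dictionary turns $\beta_{k+1}^R = Q_{[n_{k-1}+2,\,n_{k+1}]}$ into $Q_{[N_k^R+1,\,N_{k+2}^R-1]}$ (using $N_0^R=0$ to cover $k=0$), which Lemma \ref{Qs_and_ss} identifies with $s_2^R(z_k^R)$. Expanding $A_{[0,2]}^R(z_k^R) = A_R(z_k^R)\,A_R(z_{k+1}^R)$ via \eqref{A_n^R} and reading off the bottom-left entry gives exactly $s_R(z_k^R)u_R(\F_R(z_k^R)) + r_R(z_k^R)s_R(\F_R(z_k^R)) = \beta_R(z_k^R)$ by \eqref{beta_R}. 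The remaining denominator $\beta_0^R = Q_{[0,\,N_1^R-1]}$ I would instead collapse with Lemma \ref{Qs_lem} at $m=-1$, using $P_{-2}=0$, $Q_{-2}=1$ and $\det B_{[-1,-1]} = -1$, to obtain $P_{N_1^R-1}$, which Proposition \ref{FareyConvs} equates to $u_{N_1^R} = u_R(z_0^R)$.

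For the partial numerators with $k>0$ I would split into the three factors. The determinant block equals $\det A_R(z_k^R)$, most cleanly via the subproduct identity $B_{[m+1,n]} = B_{[-1,m]}^{-1}B_{[-1,n]}$ together with $\det A_{[0,n]} = \det B_{[-1,n]}$ from \eqref{detA=detB}; the block $Q_{[N_{k-1}^R+1,\,N_k^R-1]}$ is $s_R(z_{k-1}^R)$ by Lemma \ref{Qs_and_ss}, and since $z_{k-1}^R = \F_R^{-1}(z_k^R)$ this is precisely $d_R(z_k^R)$; and $Q_{[N_{k+1}^R+1,\,N_{k+2}^R-1]} = s_R(\F_R(z_k^R))$. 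Matching these against \eqref{alpha_R} yields $\alpha_{k+1}^R = \alpha_R(z_k^R)$. The two boundary indices then diverge as expected: at $k=0$ the block $Q_{[n_{-2}+2,\,n_{-1}]} = Q_{[0,-1]} = 1$ takes the place of the $d_R$-factor, so $\alpha_1^R = \alpha_R(z_0^R)/d_R(z_0^R)$; at $k=-1$ the determinant block is $\det B_{[0,0]} = -\alpha_0 = -1$ and $Q_{[1,\,N_1^R-1]} = s_R(z_0^R)$, giving $\alpha_0^R = s_R(z_0^R)$. The second (simplified) statement is then immediate: when $d_R(z) = d_R(z_0^R) = 1$ the $k=0$ case reads $\alpha_1^R = \alpha_R(z_0^R)$, merging it with the uniform $k\geq 1$ formula.

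The hard part will not be any individual computation but the disciplined index bookkeeping, and within it the determinant sign is the single most error-prone point: one must check that the factor $(-1)^{n-m+1}$ in $\det B_{[m,n]}$ exactly cancels the sign mismatch between the Farey partial numerators $\alpha_j = 2\varepsilon_j-1$ and $\det A_{\varepsilon_j} = 1-2\varepsilon_j$, so that the determinant block lands on $+\det A_R(z_k^R)$ rather than its negative. Keeping the convention $n_k := k$ straight at $k=-1,0$ and correctly invoking $N_0^R=0$ at the left endpoints of the $Q$-blocks are the other places where care is needed.
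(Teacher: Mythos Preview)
Your proposal is correct and follows essentially the same route as the paper's proof: both start from the {\sc ccf} formula of Definition~\ref{ccf} specialised to $n_k=N_{k+1}^R-1$, convert the $Q$-blocks via Lemma~\ref{Qs_and_ss} (and Lemma~\ref{Qs_lem} for $\beta_0^R$), handle the determinant block through the quotient identity and \eqref{detA=detB}, and read $\beta_R(z_k^R)$ off the bottom-left entry of $A_R(z_k^R)A_R(z_{k+1}^R)$. The only cosmetic difference is ordering---the paper treats $k=-1$ first and then $k\ge 0$, while you group by numerator/denominator---and your caution about the determinant sign and the endpoint conventions $n_k=k$, $N_0^R=0$ is exactly where the paper's bookkeeping also concentrates.
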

\begin{proof}
From Definitions \ref{ccf} and \ref{cfe}, we have 
\begin{equation}\label{alphak_betak_lem}
\begin{pmatrix}
\alpha_{k+1}^R\\
\beta_{k+1}^R
\end{pmatrix}
=
\begin{pmatrix}
-\det(B_{[N_k^R+1,N_{k+1}^R]})Q_{[N_{k-1}^R+1,N_k^R-1]}Q_{[N_{k+1}^R+1,N_{k+2}^R-1]}\\
Q_{[N_k^R+1,N_{k+2}^R-1]}
\end{pmatrix}, \quad k\ge -1,
\end{equation}
where $N_k^R:=k$ for $k<0$.  For $k=-1$, 
\[
\begin{pmatrix}
\alpha_0^R\\
\beta_0^R
\end{pmatrix}
=
\begin{pmatrix}
-\det(B_{[0,0]})Q_{[-1,-2]}Q_{[N_0^R+1,N_1^R-1]}\\
Q_{[0,N_1^R-1]}
\end{pmatrix}
=
\begin{pmatrix}
Q_{[N_0^R+1,N_1^R-1]}\\
Q_{[0,N_1^R-1]}
\end{pmatrix}.
\]
Lemma \ref{Qs_and_ss} gives $Q_{[N_0^R+1,N_1^R-1]}=s_R(z_0^R)$, and Lemma \ref{Qs_lem} and Proposition \ref{FareyConvs} give
\[Q_{[0,N_1^R-1]}=\frac{Q_{N_1^R-1}P_{-2}-P_{N_1^R-1}Q_{-2}}{\det B_{[-1,-1]}}=P_{N_1^R-1}=u_1^R=u_R(z_0^R).\]
Thus the claim holds for $\alpha_0^R,\ \beta_0^R$.  Next, notice from Equation (\ref{detA=detB}) that for any $0\le m< n$, 
\[\det B_{[m+1,n]}=\frac{\det B_{[-1,n]}}{\det B_{[-1,m]}}=\frac{\det A_{[0,n]}}{\det A_{[0,m]}}=\det(A_{\varepsilon_{m+1}}\cdots A_{\varepsilon_n}),
\]
so by \eqref{A_n^R}, we have for $k\ge 0$ that
\[\det(B_{[N_k^R+1,N_{k+1}^R]})=\det(A_{\varepsilon_{N_k^R+1}}\cdots A_{\varepsilon_{N_{k+1}^R}})
=\det(A_R(z_k^R)).\]
This, Equation (\ref{alphak_betak_lem}) and Lemma \ref{Qs_and_ss} give for $k\ge 0$
\begin{align*}
\begin{pmatrix}
\alpha_{k+1}^R\\
\beta_{k+1}^R
\end{pmatrix}
=&
\begin{pmatrix}
-\det(B_{[N_k^R+1,N_{k+1}^R]})Q_{[N_{k-1}^R+1,N_k^R-1]}Q_{[N_{k+1}^R+1,N_{k+2}^R-1]}\\
Q_{[N_k^R+1,N_{k+2}^R-1]}
\end{pmatrix}\\
=&\begin{cases}
\begin{pmatrix}
-\det(A_R(z_0^R))s_R(z_1^R)\\
s_2^R(z_0^R)
\end{pmatrix}, & k=0,\\
\begin{pmatrix}
-\det(A_R(z_k^R))s_R(z_{k-1}^R)s_R(z_{k+1}^R)\\
s_2^R(z_k^R)
\end{pmatrix}, & k>0.
\end{cases}
\end{align*}
When $k=0$, this gives $\alpha_1^R=\alpha_R(z_0^R)/d_R(z_0^R)$.  When $k>0$, $s_R(z_{k-1}^R)=s_R(\F_R^{-1}(z_k^R))=d_R(z_k)$, so $\alpha_{k+1}^R=\alpha_R(z_k^R)$.  Moreover, for $k\ge 0$, $s_2^R(z_k^R)$ is the bottom-left entry of
\begin{align*}
A_{[0,2]}^R(z_k^R)=A_1^R(z_k^R)A_2^R(z_k^R)=A_R(z_k^R)A_R(z_{k+1}^R)=\begin{pmatrix}u_R(z_k^R) & t_R(z_k^R) \\ s_R(z_k^R) & r_R(z_k^R)\end{pmatrix}\begin{pmatrix}u_R(z_{k+1}^R) & t_R(z_{k+1}^R) \\ s_R(z_{k+1}^R) & r_R(z_{k+1}^R)\end{pmatrix};
\end{align*}
see (\ref{A_n^R(z)}), (\ref{A_{[m,n]}^R}), (\ref{A_m(z_n)=A_{m+n}(z)}) and (\ref{A_R(z)}).  Thus $s_2^R(z_k^R)=s_R(z_k^R)u_R(z_{k+1}^R)+r_R(z_k^R)s_R(z_{k+1}^R)=\beta_R(z_k^R)$.  This proves the first statement.  The latter statement follows immediately from the first.
\end{proof}

We refer the reader to \S\ref{Examples of contracted Farey expansions} below for examples using Proposition \ref{cfe_alt}.

%%%%%%%%%%%%%
%%%%%%%%%%%%%
\subsection{A two-sided shift for contracted Farey expansions}\label{A two-sided shift for contracted Farey expansions}
%%%%%%%%%%%%%
%%%%%%%%%%%%%

In this subsection, we associate to the induced system $(R,\mathcal{B},\bar\mu_R,\F_R)$ an isomorphic dynamical system $(\Omega_R,\mathcal{B},\bar\nu_R,\tau_R)$ acting essentially as a two-sided shift for contracted Farey expansions.  This new system will serve several purposes in \S\ref{Examples of contracted Farey expansions} below: we will see that $(\Omega_{H_1},\mathcal{B},\bar\nu_{H_1},\tau_{H_1})=(\Omega,\mathcal{B},\bar\nu_G,\G)$ is the natural extension of the Gauss map; for certain subregions $R\subset H_1$, $(\Omega_R,\mathcal{B},\bar\nu_R,\tau_R)$ will coincide with a two-sided shift system associated to $S$-expansions in \cite{K1991}; and in \S\ref{Nakada's alpha-continued fractions, revisited}, we describe the natural extension of each of Nakada's $\alpha$-{\sc cf}s, $0<\alpha\le 1$, as an induced system $(R,\mathcal{B},\bar\mu_R,\F_R)$ by using the isomorphic system $(\Omega_R,\mathcal{B},\bar\nu_R,\tau_R)$.

To ease exposition, we impose some restrictions on our inducible subregion $R\subset\Omega$ throughout this subsection.  First, we assume that $R$ is bounded away from the origin and that for any $z=(x,y)\in R$, $y>0$.  Furthermore, we assume that $s_R(z)=1$ for all $z\in R$, and hence---by Lemma \ref{s_R>0}---that $u_R(z)\in\{0,1\}$.  The regions $R$ considered in \S\ref{Examples of contracted Farey expansions} below shall satisfy these assumptions.  

\begin{remark}
A two-sided shift space may be constructed without the restriction $s_R(z)=1$, but in general the domain $\Omega_R$ consists of several planar `sheets,' and the invariant measure $\bar\nu_R$ is a sum of measures which---restricted to each of these sheets---has density of the form in Theorem \ref{two_shift_map_and_measure} below.  However, this more general system is not needed for our purposes.  
\end{remark}

Define $\varphi_R:R\to\mathbb{R}^2$, where for $z=(x,y)\in R$, 
\begin{align}\label{varphi_R}
\varphi_R(z)=(X(z),Y(z)):=&\left(\begin{pmatrix}1 & -u_R(z)\\ 0 & 1\end{pmatrix}\cdot x,\ \begin{pmatrix}-1 & 1\\ 1-u_R(z) & u_R(z)\end{pmatrix}\cdot y\right) \\
=&\begin{cases}
\left(x,\frac{1-y}{y}\right), & u_R(z)=0,\notag \\
\left(x-1,1-y\right), & u_R(z)=1.
\end{cases}
\end{align}

\begin{figure}[t]
\centering
	\includegraphics[width=.25\textwidth]{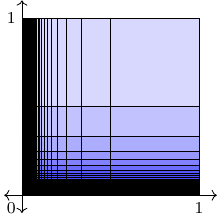}
	\hspace{30pt}
	\raisebox{.006\height}{\includegraphics[width=.45\textwidth]{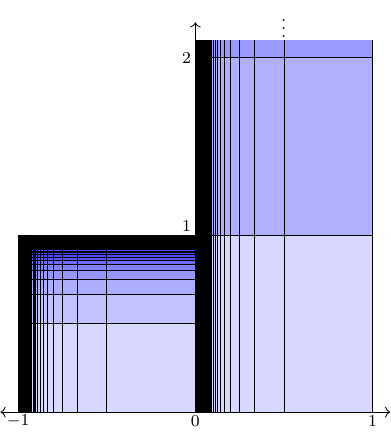}}
\caption{Left: The domain $\Omega=[0,1]^2$.  Right: The first quadrant shows part of the image of $\Omega\backslash ([0,1]\times \{0\})$ under the map $(x,y)\mapsto (x,(1-y)/y)$; the second quadrant shows the image of $\Omega$ under the map $(x,y)\mapsto(x-1,1-y)$.}
\label{varphi_fig}
\end{figure}

The map $\varphi_R$ is injective, except possibly on the null-set of points $\{(x,y)\in R\ |\ x\in\{0,1\}\}$; see Figure \ref{varphi_fig}.  Setting $\Omega_R:=\varphi_R(R)$, its inverse (off of the image of the aforementioned null-set) $\varphi_R^{-1}:\Omega_R\setminus(\{0\}\times [0,\infty))\to R$ is given by
\begin{equation*}
\varphi_R^{-1}(X,Y)=\begin{cases}
\left(X,\frac1{Y+1}\right), & X>0,\\
(X+1,1-Y), & X<0.
\end{cases}
\end{equation*}
If $z=\varphi_R^{-1}(X,Y)$, this may also be written
\begin{equation}\label{varphi_R_inv}
\varphi_R^{-1}(X,Y)=\left(\begin{pmatrix}1 & u_R(z)\\ 0 & 1\end{pmatrix}\cdot X,\ \begin{pmatrix}u_R(z) & -1\\ u_R(z)-1 & -1\end{pmatrix}\cdot Y\right)=\left(X+u_R(z),\frac{u_R(z) Y-1}{(u_R(z)-1)Y-1}\right).
\end{equation}
Define $\tau_R:\Omega_R\to \Omega_R$ by 
\begin{equation*}
\tau_R(X,Y):=\begin{cases}
(X,Y), & X=0,\\
\varphi_R\circ \F_R\circ\varphi_R^{-1}, & X\neq 0.
\end{cases}
\end{equation*}
We obtain a dynamical system $(\Omega_R,\mathcal{B},\bar\nu_R,\tau_R)$, where $\bar\nu_R:=\bar\mu_R\circ \varphi_R^{-1}$ denotes the pushforward measure of $\bar\mu_R$ under $\varphi_R$.  By construction, $(R,\mathcal{B},\bar\mu_R,\F_R)$ and $(\Omega_R,\mathcal{B},\bar\nu_R,\tau_R)$ are isomorphic.  Recall the definitions of $\alpha_R$ and $\beta_R$ from \eqref{alpha_R} and \eqref{beta_R}.

\begin{thm}\label{two_shift_map_and_measure}
The map $\tau_R:\Omega_R\to \Omega_R$ is given by $\tau_R(0,Y)=(0,Y)$ and for $X\neq 0$,
\[\tau_R(X,Y)=\left(\frac{\alpha_R(z)}{X}-\beta_R(z),\frac1{\beta_R(z)+\alpha_R(z)Y}\right),\]
where $z=\varphi_R^{-1}(X,Y)$, and the measure $\bar\nu_R$ has density
\[\frac{1}{\bar\mu(R)(1+XY)^2}.\]
\end{thm}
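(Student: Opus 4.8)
The plan is to work entirely within the Möbius/matrix framework of \S\ref{The natural extension of the Farey tent map} and \S\ref{Inducing Ito's natural extension}, treating $\tau_R=\varphi_R\circ\F_R\circ\varphi_R^{-1}$ as a product of $2\times2$ matrices acting in each coordinate, and then verifying the density by a change of variables for the pushforward $\bar\nu_R=\bar\mu_R\circ\varphi_R^{-1}$. Before anything else I would record the simplifications afforded by the standing assumptions on $R$: since $s_R\equiv1$ on $R$ and $\F_R(z)\in R$, we have $s_R(\F_R(z))=1$; since $\F_R^{-1}(z)\in R$ when defined, we have $d_R(z)=1$; and by Lemma \ref{s_R>0}, $u_R(z)\in\{0,1\}$. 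Writing $A_R(z)=\left(\begin{smallmatrix}u & t\\ 1 & r\end{smallmatrix}\right)$ and $u':=u_R(\F_R(z))$, the definitions \eqref{alpha_R} and \eqref{beta_R} collapse to $\alpha_R(z)=-\det A_R(z)=t-ur$ and $\beta_R(z)=u'+r$.

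For the map formula (the case $X=0$ being immediate by definition), I would compute the two coordinates separately. Using \eqref{F_R2}, \eqref{varphi_R}, \eqref{varphi_R_inv} and the fact that a nonzero scalar multiple of a matrix induces the same Möbius transformation, the first coordinate of $\tau_R$ is governed by
\[
\begin{pmatrix}1 & -u'\\ 0 & 1\end{pmatrix}A_R^{-1}(z)\begin{pmatrix}1 & u\\ 0 & 1\end{pmatrix}\ \sim\ \begin{pmatrix}u'+r & -\alpha_R(z)\\ -1 & 0\end{pmatrix},
\]
whence the first coordinate equals $\alpha_R(z)/X-\beta_R(z)$, and the second coordinate is governed by
\[
\begin{pmatrix}-1 & 1\\ 1-u' & u'\end{pmatrix}A_1A_R^T(z)A_1^{-1}\begin{pmatrix}u & -1\\ u-1 & -1\end{pmatrix}\ \sim\ \begin{pmatrix}0 & -1\\ -\alpha_R(z) & -\beta_R(z)\end{pmatrix},
\]
which sends $Y$ to $1/(\beta_R(z)+\alpha_R(z)Y)$. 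Here $\sim$ denotes equality up to a nonzero scalar. The crucial feature of both computations is that every appearance of $u'$ in the intermediate entries cancels except in the single slot that assembles into $\beta_R(z)=u'+r$; in particular $\det A_R(z)$ emerges in precisely the position where $-\alpha_R(z)$ is required.

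For the density I would invoke the change-of-variables formula for a pushforward: if $f(x,y)=1/(\bar\mu(R)(x+y-xy)^2)$ is the density of $\bar\mu_R$ on $R$, then $\bar\nu_R=\bar\mu_R\circ\varphi_R^{-1}$ has density $f(\varphi_R^{-1}(X,Y))\,\lvert\det D\varphi_R^{-1}(X,Y)\rvert$. Treating the two branches of \eqref{varphi_R} separately, in the case $u_R(z)=0$ one has $\varphi_R^{-1}(X,Y)=(X,1/(Y+1))$ with Jacobian of absolute value $1/(Y+1)^2$ and $x+y-xy=(1+XY)/(Y+1)$, while in the case $u_R(z)=1$ one has $\varphi_R^{-1}(X,Y)=(X+1,1-Y)$ with Jacobian of absolute value $1$ and $x+y-xy=1+XY$; in both branches the factor $(x+y-xy)^2$ cancels against the Jacobian to leave precisely $1/(\bar\mu(R)(1+XY)^2)$.

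The main obstacle will be the bookkeeping in the two matrix products: one must carefully distinguish the entries of $A_R(z)$ (evaluated at $z$) from the scalar $u'=u_R(\F_R(z))$ introduced by the outer copy of $\varphi_R$, and confirm that the $u'$-dependence disappears from the $\alpha_R(z)$ slot and survives only in $\beta_R(z)$. Everything else---the first-coordinate product and the two-branch density check---is a routine, if slightly lengthy, verification.
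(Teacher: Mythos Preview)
Your proposal is correct and follows essentially the same route as the paper: both compute $\tau_R$ by assembling the M\"obius matrices from \eqref{varphi_R}, \eqref{F_R2}, \eqref{varphi_R_inv} and both obtain the density via the change-of-variables formula for the pushforward $\bar\mu_R\circ\varphi_R^{-1}$. The only organizational differences are that the paper observes the second-coordinate matrix is exactly $M^{-T}$ where $M$ is the first-coordinate matrix (so only one product need be carried out), and the paper handles the density in a single computation parameterized by $u\in\{0,1\}$ rather than splitting into two branches; neither difference is substantive.
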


\begin{remark}
We remark here the resemblance between the measures and maps from $(\Omega_R,\mathcal{B},\bar\nu_R,\tau_R)$ and the natural extension $(\Omega,\mathcal{B},\bar\nu_G,\G)$ of the Gauss map from \S\ref{The Gauss map}.  We shall return to this point in \S\ref{Regular continued fractions, revisited} below.  
\end{remark}

\begin{proof}[Proof of Theorem \ref{two_shift_map_and_measure}]
We begin with the statement about the map $\tau_R$.  By definition, $\tau_R(0,Y)=(0,Y),$ so let $(X,Y)\in \Omega_R$ with $X\neq 0$.  Set $(X',Y'):=\tau_R(X,Y),\ z=(x,y):=\varphi_R^{-1}(X,Y)$ and $z'=(x',y'):=\F_R(z)$, and note that $(X',Y')=\varphi_R(z')$.  Set $u=u_R(z)$ and $u'=u_R(z')$.  Using Equations \eqref{varphi_R}, (\ref{F_R2}), (\ref{varphi_R_inv}), and symmetry of $A_1$, respectively,
\begin{align*}
(X',Y')=&\left(\begin{pmatrix}1 & -u'\\ 0 & 1\end{pmatrix}\cdot x',\ \begin{pmatrix}-1 & 1\\ 1-u' & u'\end{pmatrix}\cdot y'\right)\notag\\
=&\left(\begin{pmatrix}1 & -u'\\ 0 & 1\end{pmatrix}A_R^{-1}(z)\cdot x,\ \begin{pmatrix}-1 & 1\\ 1-u' & u'\end{pmatrix}A_1A_R^{T}(z)A_1^{-1}\cdot y\right)\notag\\
=&\left(\begin{pmatrix}1 & -u'\\ 0 & 1\end{pmatrix}A_R^{-1}(z)\begin{pmatrix}1 & u\\ 0 & 1\end{pmatrix}\cdot X,\ \begin{pmatrix}-1 & 1\\ 1-u' & u'\end{pmatrix}A_1A_R^{T}(z)A_1^{-1}\begin{pmatrix}u & -1\\ u-1 & -1\end{pmatrix}\cdot Y\right)\notag\\
=&\left(\begin{pmatrix}1 & -u'\\ 0 & 1\end{pmatrix}A_R^{-1}(z)\begin{pmatrix}1 & u\\ 0 & 1\end{pmatrix}\cdot X,\ \left(\begin{pmatrix}-1 & 1\\ 1-u' & u'\end{pmatrix}^{-T}A_1^{-1}A_R^{-1}(z)A_1\begin{pmatrix}u & -1\\ u-1 & -1\end{pmatrix}^{-T}\right)^{-T}\cdot Y\right).
\end{align*}
One easily computes
\[\begin{pmatrix}-1 & 1\\ 1-u' & u'\end{pmatrix}^{-T}A_1^{-1}=\begin{pmatrix}1 & -u' \\ 0 & 1\end{pmatrix} \qquad \text{and} \qquad A_1\begin{pmatrix}u & -1\\ u-1 & -1\end{pmatrix}^{-T}=-\begin{pmatrix}1 & u \\ 0 & 1\end{pmatrix},\]
so $(X',Y')=(M\cdot X,M^{-T}\cdot Y)$, where (recall $u=u_R(z)$, $u'=u_R(z')$ and $s_R(z)=s_R(z')=1$)
\begin{align*}
M=&\begin{pmatrix}1 & -u'\\ 0 & 1\end{pmatrix}A_R^{-1}(z)\begin{pmatrix}1 & u\\ 0 & 1\end{pmatrix}\\
=&\frac1{\det A_R(z)}\begin{pmatrix}1 & -u'\\ 0 & 1\end{pmatrix}\begin{pmatrix}r_R(z) & -t_R(z)\\ -s_R(z) & u_R(z)\end{pmatrix}\begin{pmatrix}1 & u\\ 0 & 1\end{pmatrix}\\
=&\frac1{\det A_R(z)}\begin{pmatrix}r_R(z)+s_R(z)u' & (r_R(z)u-t_R(z))-u'(u_R(z)-s_R(z)u)\\ -s_R(z) & u_R(z)-s_R(z)u\end{pmatrix}\\
=&\frac1{\det A_R(z)}\begin{pmatrix}r_R(z)+s_R(z)u' & r_R(z)u-t_R(z)\\ -s_R(z) & 0\end{pmatrix}\\
=&\frac{-1}{\det A_R(z)}\begin{pmatrix}-\left(r_R(z)s_R(z')+s_R(z)u_R(z')\right) & -\det(A_R(z))\\ 1 & 0\end{pmatrix}\\
=&\frac{-1}{\det A_R(z)}\begin{pmatrix}-\beta_R(z) & \alpha_R(z)\\ 1 & 0\end{pmatrix}.
\end{align*}
Thus
\[
\tau_R(X,Y)=(X',Y')=(M\cdot X,M^{-T}\cdot Y)=\left(\begin{pmatrix}-\beta_R(z) & \alpha_R(z)\\ 1 & 0\end{pmatrix}\cdot X,\begin{pmatrix}0 & 1\\ \alpha_R(z) & \beta_R(z)\end{pmatrix}\cdot Y\right),
\]
proving the claim about $\tau_R$.  

Next we prove the statement about the density of $\bar\nu_R$.  Let $S$ be a measurable subset of $\Omega_R$.  Using a change of variables,
\begin{align*}
\bar\nu_R(S)=&\bar\mu_R\circ\varphi_R^{-1}(S)=\int_{\varphi_R^{-1}(S)}d\bar\mu_R=\frac{1}{\bar\mu(R)}\iint_{\varphi_R^{-1}(S)}\rho(x,y)dxdy\\
=&\frac{1}{\bar\mu(R)}\iint_{S}\rho(\varphi_R^{-1}(X,Y))|\det J|dXdY,
\end{align*}
where
\[\rho(x,y):=\frac{1}{(x+y-xy)^2}\]
is the density of $\bar\mu$ and $J$ is the Jacobian of $\varphi_R^{-1}$ at $(X,Y)\in S$.  Let $u=u_R(z)\in\{0,1\}$, where $z=\varphi_R^{-1}(X,Y)$.  By Equation (\ref{varphi_R_inv}), the Jacobian of $\varphi_R^{-1}$ at $(X,Y)$ is
\[J=\begin{pmatrix}1 & 0 \\ 0 & \frac{u((u-1)Y-1)-(u-1)(u Y-1)}{((u-1)Y-1)^2}\end{pmatrix}=\begin{pmatrix}1 & 0 \\ 0 & \frac{-1}{((u-1)Y-1)^2}\end{pmatrix}.\]
Moreover,
\begin{align*}
\rho(\varphi_R^{-1}(X,Y))=&\left((X+u)+\frac{u Y-1}{(u-1)Y-1}-(X+u)\frac{u Y-1}{(u-1)Y-1}\right)^{-2}\\
=&\left(\frac{(X+u)((u-1)Y-1)+(u Y-1)-(X+u)(u Y-1)}{((u-1)Y-1)}\right)^{-2}\\
=&\left(\frac{1+XY}{(u-1)Y-1}\right)^{-2}\\
\end{align*}
so that
\[\rho(\varphi_R^{-1}(X,Y))|\det J|=\left(\frac{(u-1)Y-1}{1+XY}\right)^2\frac{1}{((u-1)Y-1)^2}=\frac1{(1+XY)^2}.\]
\end{proof}

For given $(X,Y)=(X(z),Y(z))\in \Omega_R$, set
\begin{equation}\label{XnRYnRunRsnR}
(X_n^R,Y_n^R)=(X_n^R(z),Y_n^R(z)):=\tau_R^n(X,Y), \quad n\ge 0.
\end{equation}
Then, for $X_n^R\neq 0$,
\begin{equation}\label{zn=varphi_inv_XnYN}
z_n^R=\F_R^n(z)=\F_R^n\circ \varphi_R^{-1}(X,Y)=\varphi_R^{-1}\circ \tau_R^{n}(X,Y)=\varphi_R^{-1}(X_n^R,Y_n^R), \quad n\ge 0.
\end{equation}
The next result states that the map $\tau_R$ acts essentially as a two-sided shift operator on contracted Farey expansions.

\begin{prop}\label{two_shift_prop}
Let $[\beta_0^R/\alpha_0^R;\alpha_1^R/\beta_1^R,\alpha_2^R/\beta_2^R,\dots]$ denote the contracted Farey expansion of $x\in(0,1)\backslash \mathbb{Q}$ with respect to $R$ and $z=(x,y)\in R$.  Then for $n\ge 0$,
\[(X_n^R,Y_n^R)=([0/1;\alpha_{n+1}^R/\beta_{n+1}^R,\alpha_{n+2}^R/\beta_{n+2}^R,\dots],[0/1;1/\beta_n^R,\alpha_n^R/\beta_{n-1}^R,\dots,\alpha_1^R/\beta_0^R,\alpha_0^R/(1/y-1)]).\]
\end{prop}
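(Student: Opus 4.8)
The plan is to bypass the two continued fractions entirely and instead prove the single pair of Möbius identities
\[
X_n^R=(B_{[-1,n]}^R)^{-1}\cdot x,
\qquad
Y_n^R=(B_{[-1,n]}^R)^{T}\cdot\Big(\tfrac1y-1\Big),
\qquad n\ge 0,
\]
where $B_{[-1,n]}^R$ is the convergent matrix of the contracted Farey expansion of $x$ with respect to $R$ and $z$. Both claimed expressions then follow for free: the identity $X_n^R=(B_{[-1,n]}^R)^{-1}\cdot x$ says exactly that $X_n^R$ equals the $n^{\text{th}}$ tail $[0/1;\alpha_{n+1}^R/\beta_{n+1}^R,\dots]$ by the tail formula \eqref{x=B*tail} applied to the contracted Farey expansion, while $Y_n^R=(B_{[-1,n]}^R)^{T}\cdot(1/y-1)$ is precisely the asserted form of $Y_n^R$ upon substituting $z=1/y-1$ into \eqref{B^T*z}. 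So the whole proposition reduces to establishing the two boxed identities, which I would prove simultaneously by induction on $n$.

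Before the induction I would record the key consequence of the standing assumptions of this subsection. Since $s_R\equiv 1$ on $R$ and the induced map is invertible on $R$ (so that $\F_R^{-1}(z)\in R$ for a.e.\ $z\in R$), we get $d_R(z)=s_R(\F_R^{-1}(z))=1$ for all $z\in R$, exactly as in Remark \ref{d_R(x,1)=1}. Hence the simplified form of Proposition \ref{cfe_alt} applies uniformly, giving $\alpha_R(z_k^R)=\alpha_{k+1}^R$ and $\beta_R(z_k^R)=\beta_{k+1}^R$ for every $k\ge 0$; this is what allows me to replace the maps $\alpha_R,\beta_R$ occurring in $\tau_R$ by the contracted-Farey digits, and in particular it removes any special treatment of the index-$1$ partial numerator.

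For the base case $n=0$ I would compute $B_{[-1,0]}^R=\left(\begin{smallmatrix}0&1\\1&0\end{smallmatrix}\right)\left(\begin{smallmatrix}0&\alpha_0^R\\1&\beta_0^R\end{smallmatrix}\right)=\left(\begin{smallmatrix}1&\beta_0^R\\0&\alpha_0^R\end{smallmatrix}\right)$ using $(\alpha_{-1},\beta_{-1})=(1,0)$ and $(\alpha_0^R,\beta_0^R)=(s_R(z),u_R(z))=(1,u_R(z))$, and then check that $(B_{[-1,0]}^R)^{-1}\cdot x=x-u_R(z)$ and $(B_{[-1,0]}^R)^{T}\cdot(1/y-1)$ reproduce, in the two cases $u_R(z)=0$ and $u_R(z)=1$, the piecewise definition \eqref{varphi_R} of $\varphi_R(z)=(X_0^R,Y_0^R)$. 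For the inductive step I would use $z_n^R=\varphi_R^{-1}(X_n^R,Y_n^R)$ from \eqref{zn=varphi_inv_XnYN} (valid since $X_n^R\neq 0$, being an irrational tail) together with the matrix form of $\tau_R$ derived inside the proof of Theorem \ref{two_shift_map_and_measure}, namely $\tau_R(X,Y)=\big(\left(\begin{smallmatrix}-\beta_R(z)&\alpha_R(z)\\1&0\end{smallmatrix}\right)\cdot X,\ \left(\begin{smallmatrix}0&1\\\alpha_R(z)&\beta_R(z)\end{smallmatrix}\right)\cdot Y\big)$. The crux is the observation that, after substituting $\alpha_R(z_n^R)=\alpha_{n+1}^R$ and $\beta_R(z_n^R)=\beta_{n+1}^R$, these two matrices are projective representatives of $(B_{n+1}^R)^{-1}$ and $(B_{n+1}^R)^{T}$: indeed $\left(\begin{smallmatrix}-\beta_{n+1}^R&\alpha_{n+1}^R\\1&0\end{smallmatrix}\right)=\alpha_{n+1}^R\,(B_{n+1}^R)^{-1}$ and $\left(\begin{smallmatrix}0&1\\\alpha_{n+1}^R&\beta_{n+1}^R\end{smallmatrix}\right)=(B_{n+1}^R)^{T}$, and scalar multiples act identically as Möbius transformations. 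Applying $\tau_R$ to the inductive hypothesis therefore yields $X_{n+1}^R=(B_{n+1}^R)^{-1}(B_{[-1,n]}^R)^{-1}\cdot x=(B_{[-1,n+1]}^R)^{-1}\cdot x$ and $Y_{n+1}^R=(B_{n+1}^R)^{T}(B_{[-1,n]}^R)^{T}\cdot(1/y-1)=(B_{[-1,n+1]}^R)^{T}\cdot(1/y-1)$, closing the induction.

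I expect the only genuine friction to sit at the two ends of the argument rather than in the inductive step itself: first, pinning down $d_R\equiv 1$ so that the digit identifications of Proposition \ref{cfe_alt} hold uniformly in $k$; and second, carefully matching the two branches $u_R(z)\in\{0,1\}$ in the base case against the piecewise map $\varphi_R$. Once $\tau_R$ is recognized as projective left-multiplication by $(B_{n+1}^R)^{-1}$ and $(B_{n+1}^R)^{T}$ in its two coordinates, the induction and the final translation through \eqref{x=B*tail} and \eqref{B^T*z} are purely formal.
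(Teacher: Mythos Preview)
Your proposal is correct and follows essentially the same approach as the paper. Both proofs are by induction on $n$, with the same base-case computation (matching $\varphi_R(z)$ against $B_{[-1,0]}^R$ in the two cases $u_R(z)\in\{0,1\}$) and the same inductive step (using the $\tau_R$-formula from Theorem~\ref{two_shift_map_and_measure} together with Proposition~\ref{cfe_alt} to identify $\alpha_R(z_n^R),\beta_R(z_n^R)$ with $\alpha_{n+1}^R,\beta_{n+1}^R$); the only cosmetic difference is that you phrase the inductive invariant as the pair of M\"obius identities $X_n^R=(B_{[-1,n]}^R)^{-1}\cdot x$, $Y_n^R=(B_{[-1,n]}^R)^{T}\cdot(1/y-1)$ and then invoke \eqref{x=B*tail}--\eqref{B^T*z}, whereas the paper works directly with the tail and backward continued fractions and their recurrence \eqref{TV}. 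You also make explicit the point $d_R\equiv 1$ (needed so that Proposition~\ref{cfe_alt} gives $\alpha_R(z_0^R)=\alpha_1^R$ at the $n=0$ step), which the paper uses implicitly.
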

\begin{proof}
For each $n\ge 0$, set
\[(T_n,V_n):=([0/1;\alpha_{n+1}^R/\beta_{n+1}^R,\alpha_{n+2}^R/\beta_{n+2}^R,\dots],[0/1;1/\beta_n^R,\alpha_n^R/\beta_{n-1}^R,\dots,\alpha_1^R/\beta_0^R,\alpha_0^R/(1/y-1)]).\]
Using \eqref{x=B*tail} and \eqref{B^T*z_first}, one finds that for each $n\ge 0$,
\begin{equation}\label{TV}
(T_{n+1},V_{n+1})=\left(\begin{pmatrix}0 & \alpha_{n+1}^R\\ 1 & \beta_{n+1}^R\end{pmatrix}^{-1}\cdot T_n,\begin{pmatrix}0 & 1\\ \alpha_{n+1}^R & \beta_{n+1}^R\end{pmatrix}\cdot V_n\right)=\left(\frac{\alpha_{n+1}^R}{T_n}-\beta_{n+1}^R,\frac1{\beta_{n+1}^R+\alpha_{n+1}^RV_n}\right).
\end{equation}
We will show by induction that $(X_n^R,Y_n^R)=(T_n,V_n)$ for all $n\ge 0$.  By (\ref{varphi_R}), Proposition \ref{cfe_alt} and the fact that $s_R(z)=1$ for all $z$, 
\[X_0^R=\begin{pmatrix}1 & -u_R(z) \\ 0 & 1\end{pmatrix}\cdot x=\begin{pmatrix}\alpha_0^R & -\beta_0^R \\ 0 & 1\end{pmatrix}\cdot x=\left(\begin{pmatrix}0 & 1 \\ 1 & 0\end{pmatrix}\begin{pmatrix}0 & \alpha_0^R \\ 1 & \beta_0^R\end{pmatrix}\right)^{-1}\cdot x.\]
Setting $n=0$ in (\ref{x=B*tail}) and multiplying both sides by $B_{[-1,0]}^{-1}$ reveals that $X_0^R=T_0$.  Similarly, 
\begin{align*}
Y_0^R=&\begin{pmatrix}-1 & 1 \\ 1-u_R(z) & u_R(z)\end{pmatrix}\cdot y=\begin{pmatrix}-1 & 1 \\ \alpha_0^R-\beta_0^R & \beta_0^R\end{pmatrix}\cdot y\\
=&\begin{pmatrix}0 & 1 \\ \alpha_0^R & \beta_0^R\end{pmatrix}\begin{pmatrix}0 & 1 \\ 1 & 0\end{pmatrix}\begin{pmatrix}-1 & 1 \\ 1 & 0\end{pmatrix}\cdot y=\left(\begin{pmatrix}0 & 1 \\ 1 & 0\end{pmatrix}\begin{pmatrix}0 & \alpha_0^R\\ 1 & \beta_0^R\end{pmatrix}\right)^T\begin{pmatrix}-1 & 1 \\ 1 & 0\end{pmatrix}\cdot y.\\
\end{align*}
Since $\left(\begin{smallmatrix}-1 & 1 \\ 1 & 0\end{smallmatrix}\right)\cdot y=1/y-1$, Equation (\ref{B^T*z}) gives $Y_0^R=V_0$.  Now suppose that $(X_n^R,Y_n^R)=(T_n,V_n)$ for some $n\ge 0$.  By Theorem \ref{two_shift_map_and_measure}, Proposition \ref{cfe_alt}, our inductive hypothesis and \eqref{TV},
\begin{align*}
(X_{n+1}^R,Y_{n+1}^R)=&\left(\frac{\alpha_R(z_n^R)}{X_n^R}-\beta_R(z_n^R),\frac1{\beta_R(z_n^R)+\alpha_R(z_n^R)Y_n^R}\right)\\
=&\left(\frac{\alpha_{n+1}^R}{T_n}-\beta_{n+1}^R,\frac1{\beta_{n+1}^R+\alpha_{n+1}^RV_n}\right)\\
=&(T_{n+1},V_{n+1}).
\end{align*}
\end{proof}

For the remainder of this subsection, we restrict our attention to the full-measure subset of points $z\in R$ for which $z_n^R=(x_n^R,y_n^R):=\F_R^n(z)\in R$ is defined and $x_n^R\neq 0$ for all $n\in\mathbb{Z}$ (in particular, $x\notin\mathbb{Q}$).  We remark that as $\F_R$ is totally invariant on this subset, the induced system $(R,\mathcal{B},\bar\mu_R,\F_R)$ and its restriction to this full-measure subset are isomorphic.  The same is true of the system $(\Omega_R,\mathcal{B},\bar\nu_R,\tau_R)$ and its restriction to the image under $\varphi_R$ of our full-measure, totally $\F_R$-invariant subset of $R$.  Abusing notation, we denote these restricted, isomorphic systems again by $(R,\mathcal{B},\bar\mu_R,\F_R)$ and $(\Omega_R,\mathcal{B},\bar\nu_R,\tau_R)$.  

Now, let 
\[\Delta(0/1;\alpha_1/\beta_1,\alpha_2/\beta_2,\dots,\alpha_n/\beta_n)\times\Delta(0/1;1/\beta_0,\alpha_0/\beta_{-1},\alpha_{-1}/\beta_{-2},\dots,\alpha_{-(m-1)}/\beta_{-m})\subset\Omega_R\]
be the (possibly empty) set of points $(X(z),Y(z))\in\Omega_R$ satisfying
\[\alpha_R(z_j^R)=\alpha_{j+1}\quad \text{and}\quad \beta_R(z_k^R)=\beta_{k+1}\]
for all $-m\le j\le n-1$ and $-m-1\le k\le n-1$.  The following result is needed in \S\ref{Nakada's alpha-continued fractions, revisited} below when we realise the natural extensions of the $\alpha$-{\sc cf}s as induced systems $(R,\mathcal{B},\bar\mu_R,\F_R)$.

\begin{prop}\label{gen_set_of_B}
The Borel $\sigma$-algebra $\mathcal{B}$ restricted to $\Omega_R$ is generated by the sets
\[\Delta(0/1;\alpha_1/\beta_1,\alpha_2/\beta_2,\dots,\alpha_n/\beta_n)\times\Delta(0/1;1/\beta_0,\alpha_0/\beta_{-1},\alpha_{-1}/\beta_{-2},\dots,\alpha_{-(m-1)}/\beta_{-m}).\]
\end{prop}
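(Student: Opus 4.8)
The plan is to prove two inclusions between the $\sigma$-algebra generated by the displayed sets, call it $\mathcal{C}$, and the trace $\mathcal{B}|_{\Omega_R}$. Since $\Omega_R\subseteq\mathbb{R}^2$, the trace $\mathcal{B}|_{\Omega_R}$ is generated by the two coordinate projections $(X,Y)\mapsto X$ and $(X,Y)\mapsto Y$; thus the proof reduces to two claims: first, that each generating set is Borel (giving $\mathcal{C}\subseteq\mathcal{B}|_{\Omega_R}$), and second, that both coordinate functions $X$ and $Y$ are $\mathcal{C}$-measurable (giving $\mathcal{B}|_{\Omega_R}\subseteq\mathcal{C}$). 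Throughout I work on the full-measure, totally $\tau_R$-invariant subset fixed above and suppress the null set on which $\varphi_R$ fails to be injective.

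The first claim is routine. The integer-valued maps $z\mapsto\alpha_R(z),\beta_R(z)$ of \eqref{alpha_R}--\eqref{beta_R} are Borel, being built from the hitting time $N_R$ and the matrix entries of $A_R(z)$; composing with the Borel iterates $\F_R^j$ and transporting along the bimeasurable $\varphi_R$ shows that each of $(X,Y)\mapsto\alpha_R(z_j^R)$ and $(X,Y)\mapsto\beta_R(z_k^R)$ is Borel on $\Omega_R$. A generating set is a finite intersection of preimages of singletons under these maps, hence Borel, so $\mathcal{C}\subseteq\mathcal{B}|_{\Omega_R}$.

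For the second claim, consider first $X$. By Proposition \ref{two_shift_prop}, $X=X_0^R=[0/1;\alpha_1^R/\beta_1^R,\alpha_2^R/\beta_2^R,\dots]$, whose digits $\alpha_{j+1}^R=\alpha_R(z_j^R)$ and $\beta_{j+1}^R=\beta_R(z_j^R)$ are exactly the forward digit functions. Its depth-$k$ convergent $B_{[1,k]}^R\cdot 0=P_{[1,k]}^R/Q_{[1,k]}^R$ is a function of the digits $\alpha_1^R,\beta_1^R,\dots,\alpha_k^R,\beta_k^R$ alone (via the recurrences \eqref{rec_rels}), hence is constant on every generating set of future-depth $n=k$ and is therefore $\mathcal{C}$-measurable. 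Since contracted Farey expansions converge (Theorem \ref{convergents_thm}, Corollary \ref{PQ_in_us_cor}), so does this tail expansion, i.e.\ $X=\lim_{k\to\infty}P_{[1,k]}^R/Q_{[1,k]}^R$ pointwise. Being a pointwise limit of $\mathcal{C}$-measurable functions, $X$ is $\mathcal{C}$-measurable.

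The coordinate $Y$ is handled in the same spirit, but using the reversed expansion. By Proposition \ref{two_shift_prop} and the totally-invariant hypothesis, $Y=Y_0^R$ is the value of the infinite backward continued fraction $[0/1;1/\beta_0^R,\alpha_0^R/\beta_{-1}^R,\alpha_{-1}^R/\beta_{-2}^R,\dots]$ obtained by reading the past digits through the transpose relation \eqref{B^T*z}; its depth-$m$ truncation depends only on $\beta_0^R,\alpha_0^R,\dots,\beta_{-m}^R$ and hence is constant on every generating set of past-depth $m$, giving a $\mathcal{C}$-measurable function. I then conclude $Y=\lim_{m\to\infty}(\text{depth-}m\text{ truncation})$, so that $Y$ is $\mathcal{C}$-measurable, and together with the first claim this yields $\mathcal{B}|_{\Omega_R}=\mathcal{C}$. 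The main obstacle is precisely this last convergence: one must show that fixing more past digits pins $Y$ down to arbitrary precision, i.e.\ that the backward expansion converges and the dependence on the deep-past tail $\alpha_{-m}^R/(1/y_{-m}^R-1)$ from Proposition \ref{two_shift_prop} decays as $m\to\infty$. This follows from the unbounded growth of the denominators of the transpose-convergents (the relevant entries of the matrices $A_{[m,0]}$, cf.\ \eqref{conjugation}), which are Farey denominators of $x$ and hence tend to infinity; it is the one step requiring genuine estimation rather than bookkeeping.
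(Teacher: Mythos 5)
Your top-level packaging is sound and genuinely different from the paper's: where the paper shows that the cylinder sets $D_n$ have diameters shrinking \emph{uniformly} in $n$, so that every open subset of $\Omega_R$ is a countable union of cylinders, you instead reduce to showing that the two coordinate functions are measurable with respect to the cylinder $\sigma$-algebra $\mathcal{C}$, for which \emph{pointwise} convergence of the digit-determined truncations suffices. That reduction is correct (the trace Borel $\sigma$-algebra is generated by the coordinates, and each digit function is $\mathcal{C}$-measurable since the depth-$(n,m)$ cylinders form countable partitions), and your measurability direction $\mathcal{C}\subseteq\mathcal{B}|_{\Omega_R}$ matches the paper's opening remark. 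Your treatment of $X$ also closes: by Corollary \ref{PQ_in_us_cor} the truncations of the forward expansion are (shifts of) contracted Farey convergents, which converge to $x$, hence to $X$, pointwise.

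The gap is the step you yourself flag, and your proposed justification for it is wrong as stated. The convergence $d_m\to Y$ of the backward truncations is precisely the nontrivial second half of the paper's proof, and the relevant denominators are \emph{not} Farey denominators of $x$: unwinding via Proposition \ref{two_shift_prop} applied at $z_{-m}^R$ and Corollary \ref{PQ_in_us_cor}, the depth-$m$ truncation error is controlled by $Q_m^R(z_{-m}^R)=s_{m+1}^R(z_{-m}^R)$, i.e., Farey denominators of the \emph{moving backward-orbit points} $x_{-m}^R$, built from the past digit string of $z$. Their divergence is not automatic from monotonicity of the $q_j$'s of $x$; the paper extracts it from the standing hypothesis that $R$ is bounded away from the origin (which forces the number $j_N$ of completed {\sc rcf}-blocks to grow with the return count) and from the eventual bound $\min\{u_m^R,s_m^R\}\ge 1$, and the error estimate itself needs $|\det B_{[-1,m]}^R(z_{-m}^R)|=1$, which holds only because $s_R\equiv 1$ forces $|\alpha_k^R|=1$. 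Until you supply this estimate---say, the paper's chain $|Y-d_m|\le \bigl(\min\{P_m^R(z_{-m}^R),Q_m^R(z_{-m}^R)\}\,Q_m^R(z_{-m}^R)\bigr)^{-1}\to 0$---your proof of the $\mathcal{C}$-measurability of $Y$, and hence of $\mathcal{B}|_{\Omega_R}\subseteq\mathcal{C}$, does not close. Once it is supplied, your route does work, and it is in fact marginally lighter than the paper's: you need the convergence only pointwise, whereas the paper must prove it uniformly in $n$ because its covering argument has to fit a whole cylinder $D_n$ inside a given open set.
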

\begin{proof}
We first remark that each of these sets belongs to $\mathcal{B}$ since each may be written as an intersection of preimages of integers under compositions of the measurable functions $\alpha_R$, $\beta_R$, $\F_R^{\pm 1}$ and $\varphi_R^{-1}$.  Next, notice that there are only countably many such sets, so it suffices to show that any open set $U\in \Omega_R$ can be written as \emph{some} union of these.  It thus suffices to show that for any $(X,Y)=(X(z),Y(z))\in U$, there exists some set 
\begin{equation}\label{DxD}
D_n=\Delta(0/1;\alpha_1/\beta_1,\alpha_2/\beta_2,\dots,\alpha_n/\beta_n)\times\Delta(0/1;1/\beta_0,\alpha_0/\beta_{-1},\alpha_{-1}/\beta_{-2},\dots,\alpha_{-(n-2)}/\beta_{-(n-1)})
\end{equation}
such that $(X,Y)\in D_n\subset U$.  By definition, $(X,Y)$ belongs to each $D_n,\ n\ge 1$, for which 
\begin{equation}\label{XYinDxD}
\alpha_{j+1}=\alpha_R(z_j^R)\quad \text{and}\quad \beta_{k+1}=\beta_R(z_k^R)
\end{equation}
for all $-(n-1)\le j\le n-1$ and $-n\le k\le n-1$.  Thus, to prove that there is some $n$ for which $D_n\subset U$, it suffices to show that the Euclidean diameters of the sets $D_n$ tend to $0$ uniformly in $n$.  For this, it suffices to show that 
\[|X-c_n|\to 0 \qquad \text{and}\qquad |Y-d_n|\to 0\]
uniformly in $n$, where---recycling notation---$(X,Y)$ is an arbitrary point in $D_n$ and 
\[(c_n,d_n):=([0/1;\alpha_1/\beta_1,\alpha_2/\beta_2,\dots,\alpha_n/\beta_n],[0/1;1/\beta_0,\alpha_0/\beta_{-1},\alpha_{-1}/\beta_{-2},\dots,\alpha_{-(n-2)}/\beta_{-(n-1)}]).\]

Fix $D_n$ as in (\ref{DxD}) and assume $(X,Y)=(X(z),Y(z))\in D_n$ so that (\ref{XYinDxD}) holds.  Proposition \ref{cfe_alt} and the fact that $s_R(z)=1$ (and hence $d_R(z)=1$) imply that the digits of the contracted Farey expansion of $x$ with respect to $R$ and $z=(x,y)$ are given by 
\[
\begin{pmatrix}
\alpha_0^R\\
\beta_0^R
\end{pmatrix}
=
\begin{pmatrix}
1\\
u_R(z)
\end{pmatrix}
\qquad
\text{and}
\qquad
\begin{pmatrix}
\alpha_{j+1}^R\\
\beta_{j+1}^R
\end{pmatrix}
=
\begin{pmatrix}
\alpha_R(z_{j}^R)\\
\beta_R(z_{j}^R)
\end{pmatrix},
\quad j\ge 0.
\]
The previous line and \eqref{varphi_R} give $X=x-\beta_0^R$.  Letting $P_n^R,\ Q_n^R$ be as in (\ref{P_n^R&Q_n^R}), the previous line and \eqref{XYinDxD} give
\[\frac{P_n^R}{Q_n^R}=[\beta_0^R/\alpha_0^R;\alpha_1^R/\beta_1^R,\dots,\alpha_n^R/\beta_n^R]=[\beta_0^R/1;\alpha_1/\beta_1,\dots,\alpha_n/\beta_n],\]
so also $c_n=\frac{P_n^R}{Q_n^R}-\beta_0^R$.  By Corollary \ref{PQ_in_us_cor} and \eqref{A_n^R_entries}, 
\[\frac{P_n^R}{Q_n^R}=\frac{u_{n+1}^R}{s_{n+1}^R}=\frac{\lambda_Np_{j_N}+p_{j_N-1}}{\lambda_Nq_{j_N}+q_{j_N-1}},\]
where $N=N_{n+1}^R(z)$.  By \eqref{meds-mon-odd} and \eqref{meds-mon-even},
\[|X-c_n|=\left|x-\frac{P_n^R}{Q_n^R}\right|=\left|x-\frac{\lambda_Np_{j_N}+p_{j_N-1}}{\lambda_Nq_{j_N}+q_{j_N-1}}\right|\le\left|x-\frac{p_{j_N-1}}{q_{j_N-1}}\right|\le \frac{1}{q_{j_N-1}^2},\]
where the final inequality follows classical arguments in the theory of {\sc rcf}s.  Since $R$ is bounded away from the origin, there is some integer $M>0$ such that for any integer $a\ge 1$, the number of rectangles $V_{a-\lambda}\cap H_{\lambda+1}$, $0\le \lambda<a$, intersecting $R$ is no greater than $M$.  By \eqref{j_n&lambda_n_alt} and the fact that $z_{n+1}^R\in V_{a_{j_N+1}-\lambda_N}\cap H_{\lambda_N+1}$, this implies that $j_N=j_{N_{n+1}^R(z)}$ grows uniformly in $n$.  Since the denominators $q_j$ of {\sc rcf}-convergents are strictly increasing, we have that $|X-c_n|\to 0$ uniformly in $n$.  

It remains to show that $|Y-d_n|\to 0$ uniformly in $n$.  Let $n\ge 1$, and consider $z_{-n}^R=(x_{-n}^R,y_{-n}^R)=\F_R^{-n}(z)$.  By Proposition \ref{cfe_alt}, the digits of the contracted Farey expansion of $x_{-n}^R$ with respect to $R$ and $z_{-n}^R$ are given by 
\[
\begin{pmatrix}
\alpha_0^R(z_{-n}^R)\\
\beta_0^R(z_{-n}^R)
\end{pmatrix}
=
\begin{pmatrix}
1\\
u_R(z_{-n}^R)
\end{pmatrix}
\qquad
\text{and}
\qquad
\begin{pmatrix}
\alpha_{k+1}^R(z_{-n}^R)\\
\beta_{k+1}^R(z_{-n}^R)
\end{pmatrix}
=
\begin{pmatrix}
\alpha_R(z_{-n+k}^R)\\
\beta_R(z_{-n+k}^R)
\end{pmatrix}, 
\quad k\ge 0.
\]

Since $(X_n^R(z_{-n}^R),Y_n^R(z_{-n}^R))=(X,Y)\in D_n$ (see (\ref{XnRYnRunRsnR})), we have by (\ref{XYinDxD})
\[\alpha_{j+1}=\alpha_R(z_{-n+(n+j)}^R)=\alpha_{n+j+1}^R(z_{-n}^R)\quad \text{and}\quad \beta_{k+1}=\beta_R(z_{-n+(n+k)}^R)=\beta_{n+k+1}^R(z_{-n}^R)\]
for all $-(n-1)\le j\le n-1$ and $-n\le k\le n-1$.  In particular, applying Proposition \ref{two_shift_prop} to $z_{-n}^R$, we find
\[Y=Y_n^R(z_{-n}^R)=[0/1;1/\beta_n^R(z_{-n}^R),\alpha_n^R(z_{-n}^R)/\beta_{n-1}^R(z_{-n}^R),\dots,\alpha_1^R(z_{-n}^R)/\beta_0^R(z_{-n}^R),\alpha_0^R(z_{-n}^R)/(1/y_{-n}^R-1)]),\]
while
\begin{align*}
d_n=&[0/1;1/\beta_0,\alpha_0/\beta_{-1},\alpha_{-1}/\beta_{-2},\dots,\alpha_{-(n-2)}/\beta_{-(n-1)}]\\
=&[0/1;1/\beta_n^R(z_{-n}^R),\alpha_n^R(z_{-n}^R)/\beta_{n-1}^R(z_{-n}^R),\dots,\alpha_2^R(z_{-n}^R)/\beta_1^R(z_{-n}^R)])].
\end{align*}
Set
\begin{align*}
B_{[-1,n]}^R(z_{-n}^R):=&B_{[-1,n]}([\beta_0^R(z_{-n}^R)/\alpha_0^R(z_{-n}^R);\alpha_1^R(z_{-n}^R)/\beta_1^R(z_{-n}^R),\dots,\alpha_n^R(z_{-n}^R)/\beta_n^R(z_{-n}^R)])\\
=&\begin{pmatrix}0 & 1 \\ 1 & 0\end{pmatrix}\begin{pmatrix}0 & \alpha_0^R(z_{-n}^R) \\ 1 & \beta_0^R(z_{-n}^R)\end{pmatrix}\begin{pmatrix}0 & \alpha_1^R(z_{-n}^R) \\ 1 & \beta_1^R(z_{-n}^R)\end{pmatrix}\cdots \begin{pmatrix}0 & \alpha_n^R(z_{-n}^R) \\ 1 & \beta_n^R(z_{-n}^R)\end{pmatrix},
\end{align*}
and denote the entries by
\[B_{[-1,n]}^R(z_{-n}^R)=\begin{pmatrix}
P_{n-1}^R(z_{-n}^R) & P_n^R(z_{-n}^R)\\
Q_{n-1}^R(z_{-n}^R) & Q_n^R(z_{-n}^R)
\end{pmatrix}.\]
Then Equation (\ref{B^T*z}) gives
\[Y=(B_{[-1,n]}^R(z_{-n}^R))^T\cdot \left(\frac1{y_{-n}^R}-1\right)=\frac{P_{n-1}^R(z_{-n}^R)(1-y_{-n}^R)+Q_{n-1}^R(z_{-n}^R)y_{-n}^R}{P_n^R(z_{-n}^R)(1-y_{-n}^R)+Q_n^R(z_{-n}^R)y_{-n}^R},\]
while 
\[d_n=(B_{[-1,n]}^R(z_{-n}^R))^T\cdot 0=\frac{Q_{n-1}^R(z_{-n}^R)}{Q_n^R(z_{-n}^R)}.\]
Notice by Proposition \ref{cfe_alt}, Equation (\ref{alpha_R}) and the fact that $s_R(z)=1$ for all $z$, that
\[|\det(B_{[-1,n]}^R(z_{-n}^R)|=|\alpha_0^R(z_{-n}^R)\alpha_1^R(z_{-n}^R)\cdots \alpha_n^R(z_{-n}^R)|=1.\]
Moreover, recall that $y_{-n}^R\in[0,1]$.  We thus compute 
\begin{align*}
|Y-d_n|=&\left|\frac{P_{n-1}^R(z_{-n}^R)(1-y_{-n}^R)+Q_{n-1}^R(z_{-n}^R)y_{-n}^R}{P_n^R(z_{-n}^R)(1-y_{-n}^R)+Q_n^R(z_{-n}^R)y_{-n}^R}-\frac{Q_{n-1}^R(z_{-n}^R)}{Q_n^R(z_{-n}^R)}\right|\\
=&\frac{|P_{n-1}^R(z_{-n}^R)Q_n^R(z_{-n}^R)-P_n^R(z_{-n}^R)Q_{n-1}^R(z_{-n}^R)||1-y_{-n}^R|}{|P_n^R(z_{-n}^R)(1-y_{-n}^R)+Q_n^R(z_{-n}^R)y_{-n}^R||Q_n^R(z_{-n}^R)|}\\
\le&\frac{1}{|P_n^R(z_{-n}^R)+(Q_n^R(z_{-n}^R)-P_n^R(z_{-n}^R))y_{-n}^R|Q_n^R(z_{-n}^R)}\\
\le& \frac1{\min\left\{P_n^R(z_{-n}^R),Q_n^R(z_{-n}^R)\right\} Q_n^R(z_{-n}^R)}.
\end{align*}
By Corollary \ref{PQ_in_us_cor},
\[\min\left\{P_n^R(z_{-n}^R),Q_n^R(z_{-n}^R)\right\} Q_n^R(z_{-n}^R)=\min\left\{u_{n+1}^R(z_{-n}^R),s_{n+1}^R(z_{-n}^R)\right\} s_{n+1}^R(z_{-n}^R),\]
so it suffices to show that 
\[\min\left\{u_n^R(z),s_n^R(z)\right\} s_n^R(z)\to\infty\]
uniformly in $n$.  Write $u_n^R(z)=\lambda_Np_{j_N}+p_{j_N-1}\ge p_{j_N-1}$ and $s_n^R(z)=\lambda_Nq_{j_N}+q_{j_N-1}\ge q_{j_N-1}$ where $N=N_n^R(z)$.  As before, since $R$ is bounded away from the origin, $j_N=j_{N_n^R(z)}$ grows uniformly in $n$.  Since $x\notin\mathbb{Q}$, there is some $n$ large enough (independent of $z$) for which $u_n^R(z)\ge p_{j_N-1}\ge 1$.  Since the {\sc rcf}-convergent denominators $q_j$ are strictly increasing for $j>0$, 
\[\min\left\{u_n^R(z),s_n^R(z)\right\} s_n^R(z)\ge\min\left\{p_{j_N-1},q_{j_N-1}\right\} q_{j_N-1} \to\infty\]
uniformly in $n$.  
\end{proof}

Notice from Propositions \ref{cfe_alt} and \ref{two_shift_prop} that 
\[X(z)=[0/1;\alpha_R(z_0^R)/\beta_R(z_0^R),\alpha_R(z_1^R)/\beta_R(z_1^R),\dots].\]
From the proof of Proposition \ref{gen_set_of_B}, it is evident that the convergents 
\[d_n=[0/1;1/\beta_0,\alpha_0/\beta_{-1},\alpha_{-1}/\beta_{-2},\dots,\alpha_{-(n-2)}/\beta_{-(n-1)}]\]
of the {\sc gcf}
\[[0/1;1/\beta_0,\alpha_0/\beta_{-1},\alpha_{-1}/\beta_{-2},\dots]\]
with 
\[\alpha_{j+1}=\alpha_R(z_j^R) \quad \text{and} \quad \beta_{j+1}=\beta_R(z_j^R), \quad j<0,\]
also converge to $Y(z)$.  We thus obtain {\sc gcf}-expansions of both $X(z)$ and $Y(z)$ on which $\tau_R$ acts as a two-sided shift:

\begin{cor}\label{two_shift_cor}
For $z\in R$ for which $z_n^R$ is defined for all $n\in\mathbb{Z}$, 
\[\left(X(z),Y(z)\right)=\left([0/1;\alpha_R(z_0^R)/\beta_R(z_0^R),\alpha_R(z_1^R)/\beta_R(z_1^R),\dots],[0/1;1/\beta_R(z_{-1}^R),\alpha_R(z_{-1}^R)/\beta_R(z_{-2}^R),\dots]\right),\]
and for any $n\in\mathbb{Z}$, $\tau_R^n\left(X(z),Y(z)\right)$ equals
\[\left([0/1;\alpha_R(z_n^R)/\beta_R(z_n^R),\alpha_R(z_{n+1}^R)/\beta_R(z_{n+1}^R),\dots],[0/1;1/\beta_R(z_{n-1}^R),\alpha_R(z_{n-1}^R)/\beta_R(z_{n-2}^R),\dots]\right).\]
\end{cor}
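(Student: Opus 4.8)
The plan is to prove the first displayed identity—the $n=0$ case—and then derive the general statement by conjugating through $\varphi_R$. The first identity splits into an $X$-coordinate and a $Y$-coordinate claim, and for the $X$-coordinate almost nothing is needed beyond bookkeeping: Proposition \ref{two_shift_prop} with $n=0$ gives $X(z) = X_0^R = [0/1;\alpha_1^R/\beta_1^R,\alpha_2^R/\beta_2^R,\dots]$, and our standing assumption $s_R(z)=1$ forces $d_R(z)=1$ (Remark \ref{d_R(x,1)=1}), so Proposition \ref{cfe_alt} yields $\alpha_{k+1}^R = \alpha_R(z_k^R)$ and $\beta_{k+1}^R = \beta_R(z_k^R)$ for all $k\ge 0$. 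Substituting these digit identities into the tail expansion produces exactly $X(z) = [0/1;\alpha_R(z_0^R)/\beta_R(z_0^R),\alpha_R(z_1^R)/\beta_R(z_1^R),\dots]$.

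For the $Y$-coordinate, the substantive work has already been done. The paragraph immediately preceding the corollary records—extracting the estimate $|Y-d_n|\to 0$ from the proof of Proposition \ref{gen_set_of_B}—that the convergents $d_n$ of the backward {\sc gcf} $[0/1;1/\beta_R(z_{-1}^R),\alpha_R(z_{-1}^R)/\beta_R(z_{-2}^R),\dots]$ converge to $Y(z)$. By the very definition of a {\sc gcf}-expansion (\S\ref{Generalised continued fractions}), this convergence is precisely the statement that $Y(z)$ equals that infinite {\sc gcf}. I would then only need to align indices: with $\alpha_{j+1}=\alpha_R(z_j^R)$ and $\beta_{j+1}=\beta_R(z_j^R)$ for $j<0$, one has $\beta_0 = \beta_R(z_{-1}^R)$, $\alpha_0 = \alpha_R(z_{-1}^R)$, $\beta_{-1} = \beta_R(z_{-2}^R)$, and so on, giving exactly the claimed $Y$-coordinate.

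For arbitrary $n\in\mathbb{Z}$, I would use that $\tau_R$ is conjugate to $\F_R$ through $\varphi_R$. On the totally $\F_R$-invariant, full-measure subset under consideration both maps are invertible, so $\tau_R^n = \varphi_R\circ \F_R^n\circ \varphi_R^{-1}$ for every integer $n$, whence $\tau_R^n(X(z),Y(z)) = \varphi_R(\F_R^n(z)) = (X(z_n^R),Y(z_n^R))$. Applying the already-proven $n=0$ identity to the point $z_n^R$ in place of $z$, and invoking the orbit relation $(z_n^R)_j^R = z_{n+j}^R$ to shift every index by $n$, delivers precisely the asserted expression for $\tau_R^n(X(z),Y(z))$.

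In short, the argument is an assembly of Propositions \ref{two_shift_prop}, \ref{cfe_alt} and \ref{gen_set_of_B} rather than a new computation. The only genuinely nontrivial ingredient is the convergence of the one-sided backward {\sc gcf} to $Y(z)$, which is the main obstacle; but since this convergence is already discharged within the proof of Proposition \ref{gen_set_of_B}, here it need only be cited and its indexing matched to the present two-sided notation.
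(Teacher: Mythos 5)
Your proposal is correct and takes essentially the same route as the paper: the paper justifies the corollary in the paragraph immediately preceding it, obtaining the $X$-expansion from Propositions \ref{cfe_alt} and \ref{two_shift_prop} (with $s_R\equiv 1$ giving $d_R\equiv 1$, exactly as you note) and the $Y$-expansion from the estimate $|Y-d_n|\to 0$ established inside the proof of Proposition \ref{gen_set_of_B}, with the general $n\in\mathbb{Z}$ case following from the conjugacy $\tau_R=\varphi_R\circ\F_R\circ\varphi_R^{-1}$ on the totally $\F_R$-invariant full-measure subset. Your index-shifting via $(z_n^R)_j^R=z_{n+j}^R$ is precisely the implicit step the paper leaves to the reader.
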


%%%%%%%%%%%%%
%%%%%%%%%%%%%
%%%%%%%%%%%%%
\section{Examples of contracted Farey expansions}\label{Examples of contracted Farey expansions}
%%%%%%%%%%%%%
%%%%%%%%%%%%%
%%%%%%%%%%%%%

In this section we consider several examples of explicit, inducible regions $R$ and the contracted Farey expansions they produce.  We shall find in \S\ref{Regular continued fractions, revisited} {\sc rcf}s, in \S\ref{S-expansions, revisited} the second-named author's $S$-expansions, and in \S\ref{Nakada's alpha-continued fractions, revisited} Nakada's $\alpha$-{\sc cf}s for $\alpha\in (0,1]$.  Throughout this section, any reference to the induced system $(H_1,\mathcal{B},\bar\mu_{H_1},\F_{H_1})$ is to the `altered' system from Remark \ref{altered_maps}.

%%%%%%%%%%%%%
%%%%%%%%%%%%%
\subsection{Regular continued fractions, revisited}\label{Regular continued fractions, revisited}
%%%%%%%%%%%%%
%%%%%%%%%%%%%

Set $R=H_1$, and recall from Theorem \ref{H_1&Gauss} above that the induced system $(R,\mathcal{B},\bar\mu_R,\F_R)$ is isomorphic to the Gauss natural extension $(\Omega,\mathcal{B},\bar{\nu}_G,\G)$.  We re-obtain this fact here through the use of contracted Farey expansions and the two-sided shift of \S\ref{A two-sided shift for contracted Farey expansions}.  

\begin{proof}[Proof of Theorem \ref{H_1&Gauss}]
Let $z=(x,y)\in R$ with $x\neq 0$ be as in (\ref{(x,y)-rcfs}).  Using \eqref{F_NE_symbolic}, we find that $N_R(z)=a_1=a(x)$, and by (\ref{A_R(z)}),
\begin{equation}\label{A_{H_1}(z)}
\begin{pmatrix}
u_R(z) & t_R(z)\\
s_R(z) & r_R(z)
\end{pmatrix}
=A_R(z)=A_{\varepsilon_1}\dots A_{\varepsilon_{a_1}}=A_0^{a_1-1}A_1=\begin{pmatrix}0 & 1\\ 1 & a_1\end{pmatrix}=\begin{pmatrix}0 & 1\\ 1 & a(x)\end{pmatrix}.
\end{equation}
In particular, $s_R(z)=1$ for all $z$, so we are in the setting of \S\ref{A two-sided shift for contracted Farey expansions}.  We know that $(R,\mathcal{B},\bar\mu_R,\F_R)$ is isomorphic to $(\Omega_R,\mathcal{B},\bar\nu_R,\tau_R)$; we shall show that this latter system is precisely $(\Omega,\mathcal{B},\bar\nu_G,\G)$.  Since $u_R(z)=0$ for all $z$, the map $\varphi_R:R\to\mathbb{R}^2$ from (\ref{varphi_R}) is
\begin{equation}\label{H_1-varphi_R}
\varphi_R(z)=\left(x,\frac{1-y}{y}\right) \qquad \text{for all $z=(x,y)\in R$,\ $x\neq 0$},
\end{equation}
and thus $\Omega_R=\varphi_R(R)=\Omega$, up to a null set.  Since $\bar\mu(R)=\log 2$, Theorem \ref{two_shift_map_and_measure} gives that $\bar\nu_R=\bar\nu_G$.  Moreover, from Equations (\ref{alpha_R}), (\ref{beta_R}), and (\ref{A_{H_1}(z)}) we find
\begin{equation}\label{alpha_{H_1},beta_{H_1}}
\begin{pmatrix}
\alpha_R(z)\\
\beta_R(z)
\end{pmatrix}
=\begin{pmatrix}
1\\
a(x)
\end{pmatrix}.
\end{equation}
But if $(X,Y)=\varphi_R(z)$, Equation (\ref{H_1-varphi_R}) gives $X=x$, so by Theorem \ref{two_shift_map_and_measure} and Equation (\ref{GNE}),
\[\tau_R(X,Y)=\left(\frac1X-a(X),\frac1{a(X)+Y}\right)=\G(X,Y).\]
Thus $(\Omega_R,\mathcal{B},\bar\nu_R,\tau_R)=(\Omega,\mathcal{B},\bar\nu_G,\G)$.
\end{proof}

Let $z=(x,y)\in R$ as in \eqref{(x,y)-rcfs} (so $b_1=1$) with $x\notin\mathbb{Q}$, and notice that repeated use of (\ref{F_{H_1}}) gives
\[z_k^R=(x_k^R,y_k^R)=\F_R^k(x,y)=([0;a_{k+1},a_{k+2},\dots],[0;1,a_k,\dots,a_1,b_2,b_3,\dots]).\]
Thus, by Proposition \ref{cfe_alt} and Equation \eqref{alpha_{H_1},beta_{H_1}}, the digits of the contracted Farey expansion of $x$ with respect to $R=H_1$ and $z=(x,y)\in R$ are
\[\begin{pmatrix}
\alpha_0^R\\
\beta_0^R
\end{pmatrix}=
\begin{pmatrix}
s_R(z)\\
u_R(z)
\end{pmatrix}=
\begin{pmatrix}
1\\
0
\end{pmatrix}
\qquad
\text{and}
\qquad
\begin{pmatrix}
\alpha_{k+1}^R\\
\beta_{k+1}^R
\end{pmatrix}=
\begin{pmatrix}
1\\
a(x_k^R)
\end{pmatrix}=
\begin{pmatrix}
1\\
a_{k+1}
\end{pmatrix}.
\]
That is, the contracted Farey expansion of $x$ with respect to $R=H_1$ and $z=(x,y)$ recovers the {\sc rcf}-expansion $[0/1;1/a_1,1/a_2,\dots]=[0;a_1,a_2,\dots]$ of $x$.

%%%%%%%%%%%%%
%%%%%%%%%%%%%
\subsection{$S$-expansions, revisited}\label{S-expansions, revisited}
%%%%%%%%%%%%%
%%%%%%%%%%%%%

We also find $S$-expansions (and thus Minkowski's diagonal {\sc cf}s, Bosma's optimal {\sc cf}s, and Nakada's $\alpha$-{\sc cf}s for $\alpha\ge 1/2$; see \cite{K1991} and \S\ref{S-expansions} above) as special instances of contracted Farey expansions.  Indeed, let $S\subset\Omega$ be a singularisation area, i.e., $S$ is $\bar\nu_G$-measurable set with $\bar\nu_G(\partial S)=0$ satisfying both 
\begin{itemize}
\item[(a)] $S\subset V_1$ and
\item[(b)] $S\cap\G(S)=\varnothing$,
\end{itemize}
and let $[\beta_0^S/\alpha_0^S;\alpha_1^S/\beta_1^S,\alpha_2^S/\beta_2^S,\dots]$ be the $S$-expansion of $x=[0;a_1,a_2,\dots]\in (0,1)\setminus\mathbb{Q}$ obtained by simultaneously singularising at all positions $n$ for which $\G^n(x,0)\in S$ (see Definitions 4.4, 4.5 of \cite{K1991} and \S\ref{S-expansions} above).  For $n\ge -1$ let
\[B_{[-1,n]}^S=\begin{pmatrix}
P_{n-1}^S & P_n^S\\
Q_{n-1}^S & Q_n^S
\end{pmatrix}:=B_{[-1,n]}([\beta_0^S/\alpha_0^S;\alpha_1^S/\beta_1^S,\alpha_2^S/\beta_2^S,\dots]).\]
From remarks preceding Theorem 4.13 and Theorem 5.3.i of \cite{K1991}, it follows that $P_{-2}^S=Q_{-1}^S=0$, $P_{-1}^S=Q_{-2}^S=1$, and for $k\ge 0$,
\[\begin{pmatrix}
P_k^S\\
Q_k^S
\end{pmatrix}=
\begin{pmatrix}
p_{j_k^S}\\
q_{j_k^S}
\end{pmatrix},\]
where $p_j/q_j$ is the $j^\text{th}$ {\sc rcf}-convergent of $x$ and $(j_k^S)_{k\ge 0}$ is the subsequence of powers $j\ge 0$ for which $\G^j(x,0)\in \Delta:=\Omega\backslash S$.  

We wish to determine a proper, inducible subregion $R\subset\Omega$ for which the contracted Farey expansion $[\beta_0^R/\alpha_0^R;\alpha_1^R/\beta_1^R,\alpha_2^R/\beta_2^R,\dots]$ of $x$ with respect to $R$ coincides with the $S$-expansion of $x$.  By Remark \ref{convs_determin_digits}, it suffices to find $R$ such that $P_k^R=P_k^S$ and $Q_k^R=Q_k^S$ for all $k\ge 0$, with
\[B_{[-1,k]}^R=\begin{pmatrix}
P_{k-1}^R & P_k^R\\
Q_{k-1}^R & Q_k^R
\end{pmatrix}
\]
as in (\ref{B_{[m,n]}^R}) and (\ref{P_n^R&Q_n^R}).

It seems natural to set $R:=\varphi_{H_1}^{-1}(\Delta)\subset  H_1$, where $\varphi_{H_1}:H_1\to \Omega$ is the isomorphism map between $(H_1,\mathcal{B},\bar\mu_{H_1},\F_{H_1})$ and $(\Omega,\mathcal{B},\bar\nu_G,\G)$ from (\ref{H_1-varphi_R}) above satisfying $\varphi_{H_1}\circ\F_{H_1}(z)=\G\circ \varphi_{H_1}(z)$ for all $z\in H_1$.  However, in the classical setting of {\sc rcf}s and, in particular, $S$-expansions, one uses the one-to-one correspondence between points in the $\G$-orbit of $(x,0)$ and {\sc rcf}-convergents $p_n/q_n$, which come from the \emph{right-hand column} of the matrix $\left(\begin{smallmatrix}p_{n-1} & p_n\\ q_{n-1} & q_n \end{smallmatrix}\right)$.  On the other hand, for contracted Farey expansions we use the one-to-one correspondence between points in the $\F_R$-orbit of $(x,1)$ and contracted Farey convergents $u_n^R/s_n^R$ coming from the \emph{left-hand column} of the matrix $A_{[0,n]}^R=A_{[0,N_n^R]}$ from (\ref{supr_not}).  When $R=\varphi_{H_1}^{-1}(\Omega)=H_1$, the matrix $A_{[0,n]}^{H_1}$ is of the form $\left(\begin{smallmatrix}p_{n-1} & p_{n}\\ q_{n-1} & q_{n} \end{smallmatrix}\right)$, so the one-to-one correspondence in this setting is between $\F_{H_1}^{n}(x,1)=\varphi_{H_1}^{-1}\circ\G^{n}(x,0)$ and $p_{n-1}/q_{n-1}$.  This indexing discrepancy is fixed by instead considering the isomorphism map $\psi:=\G^{-1}\circ \varphi_{H_1}$ between $(H_1,\mathcal{B},\bar\mu_{H_1},\F_{H_1})$ and $(\Omega,\mathcal{B},\bar\nu_G,\G)$:

\[
\begin{tikzcd}
H_1 \arrow{r}{\F_{H_1}} \arrow[swap]{d}{\varphi_{H_1}}  & H_1 \arrow{d}{\varphi_{H_1}} \arrow[swap]{dl}{\psi} \\
\Omega \arrow{r}{\G} & \Omega%
\end{tikzcd}
\]

\begin{figure}[t]
\centering
\begin{tikzpicture}
\node[inner sep=0pt] (SE1) at (0,0)
	{\includegraphics[width=.25\textwidth]{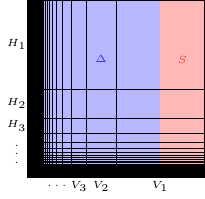}};
\node[inner sep=0pt] (SE2) at (6,0)
	{\includegraphics[width=.25\textwidth]{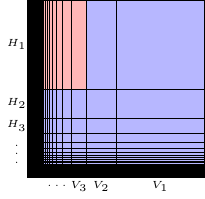}};
\node[inner sep=0pt] (SE3) at (6,5.5)
	{\includegraphics[width=.25\textwidth]{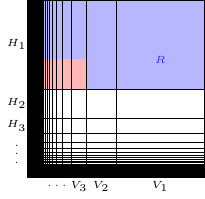}};
\draw[->,thick] (2.3,0) -- (SE2.west) node[midway, above] {$\G$};
\draw[<-,thick] (6,2.2) -- (SE3.south) node[midway, right] {$\varphi_{H_1}$};
\draw[->,thick] (SE3.west) -- (0.2,2.2) node[midway, above] {$\psi$};
\end{tikzpicture}
\caption{Bottom-left: A singularisation area $S$ and its complement $\Delta$ in $\Omega$.  Bottom-right: The images of $S$ and $\Delta$ under $\G$.  Top-right: The region $R=\psi^{-1}(\Delta)$ and its complement in $H_1$.}
\label{S-expns_fig}
\end{figure}

Set
\[R:=\psi^{-1}(\Delta)= H_1\backslash \psi^{-1}(S);\]
see Figure \ref{S-expns_fig}.  Notice that for any $z\in H_1$, either $\varphi_{H_1}(z)\in \Delta$ or $\G\circ\varphi_{H_1}(z)\in \Delta$; otherwise, both $\varphi_{H_1}(z)$ and $\G\circ \varphi_{H_1}(z)$ belong to $S$, contrary to condition (ii) of a singularisation area.  Thus, either $\psi^{-1}\circ \varphi_{H_1}(z)\in R$ or $\psi^{-1}\circ \G\circ\varphi_{H_1}(z)\in R$.  But $\psi^{-1}\circ \varphi_{H_1}=\F_{H_1}$ and $\psi^{-1}\circ \G\circ \varphi_{H_1}=\F_{H_1}^2$, so for any $z\in H_1$, either $\F_{H_1}(z)\in R$ or $\F_{H_1}^2(z)\in R$.  The entries of the matrices $A_R(z)$ depend on whether $\F_{H_1}(z)\in R$:

\begin{lem}\label{A_R-S-expns}
For any $z=(x,y)\in H_1$ with $x=[0;a_1,a_2,\dots]$,
\[\begin{pmatrix}
u_R(z) & t_R(z)\\
s_R(z) & r_R(z)
\end{pmatrix}=A_R(z)=\begin{cases}
\begin{pmatrix}
0 & 1\\
1 & a_1
\end{pmatrix} & \text{if $\F_{H_1}(z)\in R$},\\
\begin{pmatrix}
1 & a_2\\
1 & a_2+1
\end{pmatrix} & \text{if $\F_{H_1}(z)\notin R$}.
\end{cases}\]
\end{lem}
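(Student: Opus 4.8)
The plan is to determine the hitting time $N_R(z)$ explicitly and then read off $A_R(z)=A_{\varepsilon_1}\cdots A_{\varepsilon_{N_R(z)}}$ from the $\varepsilon$-coding \eqref{epsilon_seq} together with \eqref{A_0^kA_1}. First I would observe that since $R\subset H_1$, the $\F$-orbit of $z$ can meet $R$ only at times when it lies in $H_1$; these are exactly the return times $N_k^{H_1}(z)$, at which $\F^{N_k^{H_1}}(z)=\F_{H_1}^k(z)$. Hence $N_R(z)=N_k^{H_1}(z)$ for the least $k\ge 1$ with $\F_{H_1}^k(z)\in R$. By the dichotomy established just above the lemma---that for every $z\in H_1$ either $\F_{H_1}(z)\in R$ or $\F_{H_1}^2(z)\in R$---this least $k$ is either $1$ or $2$, so the argument splits into exactly these two cases. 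I would also recall that $N_1^{H_1}(z)=a_1=a(x)$ and, since the first coordinate of $\F_{H_1}(z)$ is $[0;a_2,a_3,\dots]$, that $N_1^{H_1}(\F_{H_1}(z))=a_2$; thus $N_2^{H_1}(z)=a_1+a_2$.

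In the first case, $\F_{H_1}(z)\in R$ gives $N_R(z)=a_1$. The coding \eqref{epsilon_seq} shows $\varepsilon_1\cdots\varepsilon_{a_1}=0^{a_1-1}1$, so by \eqref{A_0^kA_1},
\[
A_R(z)=A_0^{a_1-1}A_1=\begin{pmatrix}0 & 1\\ 1 & a_1\end{pmatrix},
\]
which is the first branch.

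The crux is the second case. Here $N_R(z)=a_1+a_2$ and $\varepsilon_1\cdots\varepsilon_{a_1+a_2}=0^{a_1-1}10^{a_2-1}1$, whence
\[
A_R(z)=\left(A_0^{a_1-1}A_1\right)\left(A_0^{a_2-1}A_1\right)=\begin{pmatrix}0 & 1\\ 1 & a_1\end{pmatrix}\begin{pmatrix}0 & 1\\ 1 & a_2\end{pmatrix}=\begin{pmatrix}1 & a_2\\ a_1 & a_1a_2+1\end{pmatrix}.
\]
To reduce this to the claimed matrix I must show $a_1=1$, and this is where the structure of the singularisation area enters. The condition $\F_{H_1}(z)\notin R=\psi^{-1}(\Delta)$ means $\psi(\F_{H_1}(z))\in S$, and using $\psi=\G^{-1}\circ\varphi_{H_1}$ together with the conjugacy $\varphi_{H_1}\circ\F_{H_1}=\G\circ\varphi_{H_1}$ one computes $\psi(\F_{H_1}(z))=\G^{-1}\G\,\varphi_{H_1}(z)=\varphi_{H_1}(z)$. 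Thus $\varphi_{H_1}(z)\in S\subset V_1$, so by \eqref{H_1-varphi_R} the first coordinate $x$ of $\varphi_{H_1}(z)$ satisfies $x>1/2$, forcing $a_1=a(x)=1$. Substituting $a_1=1$ collapses the product above to $\left(\begin{smallmatrix}1 & a_2\\ 1 & a_2+1\end{smallmatrix}\right)$, as desired.

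I expect the identification $\F_{H_1}(z)\notin R\iff\varphi_{H_1}(z)\in S$---and the consequent deduction $a_1=1$ from condition (a) of a singularisation area---to be the only non-mechanical step; the remaining work consists of the routine matrix products governed by \eqref{epsilon_seq} and \eqref{A_0^kA_1} and the bookkeeping of return times to $H_1$.
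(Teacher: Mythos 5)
Your proof is correct and follows essentially the same route as the paper's: compute $N_R(z)$ ($=a_1$ or $a_1+a_2$ via the dichotomy $\F_{H_1}(z)\in R$ or $\F_{H_1}^2(z)\in R$), form the corresponding products $A_0^{a_1-1}A_1$ and $A_0^{a_1-1}A_1A_0^{a_2-1}A_1$, and in the second case deduce $a_1=1$ from $\F_{H_1}(z)\notin R\iff\varphi_{H_1}(z)\in S\subset V_1$ together with $\varphi_{H_1}$ acting as the identity on the first coordinate. Your explicit computation $\psi\circ\F_{H_1}=\G^{-1}\circ\G\circ\varphi_{H_1}=\varphi_{H_1}$ just spells out the equivalence the paper asserts in one line.
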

\begin{proof}
First suppose that $\F_{H_1}(z)\in R$.  Now $\F_{H_1}(z)=\F^{a_1}(z)$, and for all $1\le j<a_1$, $\F^j(z)\notin H_1$ implies $\F^j(z)\notin R\subset H_1$.  Thus $N_R(z)=a_1$, and by (\ref{A_R(z)}) we have
\[A_R(z)=A_0^{a_1-1}A_1=\begin{pmatrix}
0 & 1\\
1 & a_1
\end{pmatrix}.\]
If $\F_{H_1}(z)=\F^{a_1}(z)\notin R$, then $\F_{H_1}^2(z)=\F^{a_1+a_2}(z)\in R$.  Since $\F^j(z)\notin H_1$ for $1\le j<a_1+a_2$ with $j\neq a_1$, we have $N_R(z)=a_1+a_2$ and---by \eqref{A_R(z)}---
\[A_R(z)=A_0^{a_1-1}A_1A_0^{a_2-1}A_1=\begin{pmatrix}
0 & 1\\
1 & a_1
\end{pmatrix}\begin{pmatrix}
0 & 1\\
1 & a_2
\end{pmatrix}=\begin{pmatrix}
1 & a_2\\
a_1 & a_2a_1+1
\end{pmatrix}
.\]
But $\F_{H_1}(z)\notin R$ is equivalent to $\varphi_{H_1}(z)\notin \Delta$, or $\varphi_{H_1}(z)\in S$.  Since $\varphi_{H_1}$ acts as the identity on the first coordinate and $S\subset V_1$ by condition (a) of a singularisation area, this implies $a_1=1$.
\end{proof}

By Lemma \ref{A_R-S-expns}, $s_R(z)=1$ for all $z\in H_1$ and, in particular, for all $z=(x,1)$.  By Corollary \ref{PQ_in_us_cor}, Equation \eqref{A_n^R_entries}, and the fact that $R\subset H_1$ (so $\lambda_{N_{k+1}^R}=0$),
\[\begin{pmatrix}
P_k^R\\
Q_k^R
\end{pmatrix}=\begin{pmatrix}
u_{k+1}^R\\
s_{k+1}^R
\end{pmatrix}=
\begin{pmatrix}
p_{j_{N_{k+1}^R}-1}\\
q_{j_{N_{k+1}^R}-1}
\end{pmatrix},\qquad k\ge0.\]
Writing $N_{k+1}^R=a_1+\cdots +a_{j_{N_{k+1}^R}}$ (see \eqref{j_n&lambda_n_alt}), we see that the indices $j_{N_{k+1}^R},\ k\ge 0,$ are precisely the powers $j>0$ for which 
\[\F^{a_1+\cdots+a_j}(x,1)=\F_{H_1}^j(x,1)\in R.\]
Equivalently, these are the powers $j>0$ for which
\[\varphi_{H_1}^{-1}\circ \G^j\circ \varphi_{H_1}(x,1)\in R=\psi^{-1}(\Delta)=\varphi_{H_1}^{-1}\circ \G(\Delta),\]
or $\G^{j-1}(x,0)\in \Delta$.  Thus $j_{N_{k+1}^R}-1=j_k^S$, and 
\[\begin{pmatrix}
P_k^R\\
Q_k^R
\end{pmatrix}=
\begin{pmatrix}
p_{j_{N_{k+1}^R}-1}\\
q_{j_{N_{k+1}^R}-1}
\end{pmatrix}=
\begin{pmatrix}
p_{j_k^S}\\
q_{j_k^S}
\end{pmatrix}=
\begin{pmatrix}
P_k^S\\
Q_k^S
\end{pmatrix},
\qquad k\ge0.\]

By Remark \ref{convs_determin_digits}, this proves:

\begin{prop}\label{cfe&s-expn_agree}
The contracted Farey expansion of $x$ with respect to $R=\psi^{-1}(\Delta)$ coincides with the $S$-expansion of $x$.
\end{prop}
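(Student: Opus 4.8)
The plan is to reduce the claimed coincidence of expansions to an equality of convergents, and then to verify that the two resulting subsequences of {\sc rcf}-convergents agree term by term. By Remark~\ref{convs_determin_digits}, the digits of a {\sc gcf} are determined by the numerators and denominators $P_n,Q_n$ of its convergents; hence it suffices to establish that $P_k^R=P_k^S$ and $Q_k^R=Q_k^S$ for all $k\ge 0$. Both families are subsequences of the {\sc rcf}-convergents $p_j/q_j$ of $x$: the $S$-expansion retains $(P_k^S,Q_k^S)=(p_{j_k^S},q_{j_k^S})$, where $(j_k^S)_{k\ge 0}$ enumerates those $j\ge 0$ with $\G^j(x,0)\in\Delta$, while Corollary~\ref{PQ_in_us_cor} (together with the fact that $s_R\equiv 1$, so $c_k^R=1$) gives $(P_k^R,Q_k^R)=(u_{k+1}^R,s_{k+1}^R)$. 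The whole argument thus comes down to matching index sets.

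First I would compute the induced matrices $A_R(z)$, which is the content of Lemma~\ref{A_R-S-expns}. Because $R\subset H_1$, every return time $N_{k+1}^R$ to $R$ is of the form $a_1+\cdots+a_j$, so in the Euclidean decomposition \eqref{j_n&lambda_n_alt} one has $\lambda_{N_{k+1}^R}=0$; substituting this into \eqref{A_n^R_entries} collapses $(u_{k+1}^R,s_{k+1}^R)$ to $(p_{j-1},q_{j-1})$ with $j=j_{N_{k+1}^R}$. In particular $s_R(z)=1$ throughout, placing us squarely in the setting of \S\ref{A two-sided shift for contracted Farey expansions}, and it only remains to identify $j_{N_{k+1}^R}-1$ with $j_k^S$.

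This identification is the crux, and it is exactly where the choice $\psi=\G^{-1}\circ\varphi_{H_1}$ (rather than $\varphi_{H_1}$ itself) pays off: the two correspondences are indexed differently, one reading $p_n/q_n$ off the right-hand column of $\bigl(\begin{smallmatrix}p_{n-1}&p_n\\q_{n-1}&q_n\end{smallmatrix}\bigr)$ along the $\G$-orbit of $(x,0)$, the other reading $u_n/s_n$ off the left-hand column of $A_{[0,N_n^R]}$ along the $\F$-orbit of $(x,1)$. I would make the translation explicit using $\varphi_{H_1}\circ\F_{H_1}=\G\circ\varphi_{H_1}$: the condition $\F_{H_1}^j(x,1)\in R$ becomes $\varphi_{H_1}^{-1}\circ\G^{j}\circ\varphi_{H_1}(x,1)\in R=\psi^{-1}(\Delta)=\varphi_{H_1}^{-1}\circ\G(\Delta)$, i.e.\ $\G^{j-1}(x,0)\in\Delta$. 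This yields $j_{N_{k+1}^R}-1=j_k^S$, hence $(P_k^R,Q_k^R)=(p_{j_k^S},q_{j_k^S})=(P_k^S,Q_k^S)$ for every $k\ge 0$, and a final appeal to Remark~\ref{convs_determin_digits} upgrades the equality of convergents to equality of digits.

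The one point I would take care to justify is that $R=\psi^{-1}(\Delta)$ is genuinely inducible and is entered infinitely often by the $\F$-orbit of $(x,1)$, so that the contracted Farey expansion exists and Lemma~\ref{A_R-S-expns} is a true dichotomy. This is supplied by condition~(b) of a singularisation area, $S\cap\G(S)=\varnothing$: for each $z\in H_1$ at least one of $\varphi_{H_1}(z),\G\circ\varphi_{H_1}(z)$ lies in $\Delta$, whence at least one of $\F_{H_1}(z),\F_{H_1}^2(z)$ lies in $R$. I expect this step, combined with the index bookkeeping above, to be the main hurdle, while the matrix computations themselves are routine.
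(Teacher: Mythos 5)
Your proposal is correct and follows essentially the same route as the paper: reduce to equality of convergents via Remark \ref{convs_determin_digits}, use Corollary \ref{PQ_in_us_cor} together with $s_R\equiv 1$ (Lemma \ref{A_R-S-expns}) and $\lambda_{N_{k+1}^R}=0$ to get $(P_k^R,Q_k^R)=(p_{j_{N_{k+1}^R}-1},q_{j_{N_{k+1}^R}-1})$, and then perform exactly the index shift $j_{N_{k+1}^R}-1=j_k^S$ via the conjugation $\varphi_{H_1}\circ\F_{H_1}=\G\circ\varphi_{H_1}$ and $R=\psi^{-1}(\Delta)=\varphi_{H_1}^{-1}\circ\G(\Delta)$. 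Your added care about the dichotomy $\F_{H_1}(z)\in R$ or $\F_{H_1}^2(z)\in R$ from condition (b) matches the paper's remarks preceding Lemma \ref{A_R-S-expns}, so there is nothing substantive to add.
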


In \S5 of \cite{K1991}, a two-dimensional ergodic system\footnote{We replace the original notation $\Omega_S$ from \cite{K1991} by $\Gamma_S$ to avoid confusion with $\Omega_R$ defined \S\ref{A two-sided shift for contracted Farey expansions}.  However, we shall see in Proposition \ref{S-expn_two_shift} below that, in fact, $\Gamma_S=\Omega_R$.} $(\Gamma_S,\mathcal{B},\rho,\tau)$ is constructed corresponding to the two-sided shift operator for $S$-expansions.  We briefly recall this system here and show that it coincides with $(\Omega_R,\mathcal{B},\bar\nu_R,\tau_R)$ as defined in \S\ref{A two-sided shift for contracted Farey expansions}.  (Note by Lemma \ref{A_R-S-expns} that $s_R(z)=1$ for all $z\in R$, so we are in the setting of \S\ref{A two-sided shift for contracted Farey expansions}.)  Set 
\[\Delta^-:=\G(S) \qquad \text{and} \qquad \Delta^+:=\Delta\backslash\Delta^-.\]
Define $M:\Delta\to\mathbb{R}^2$ for $z=(x,y)$ by
\[M(z):=\begin{cases}
(x,y), & z\in \Delta^+,\\
\left(\frac{-x}{1+x},1-y\right), & z\in \Delta^-,
\end{cases}\]
and let $\Gamma_S:=M(\Delta)$.  The map $\tau:\Gamma_S\to\Gamma_S$ is defined by $\tau:=M\circ\G_\Delta\circ M^{-1},$ where $\G_\Delta:\Delta\to\Delta$ is the map $\G$ induced on $\Delta$, i.e., $\G_\Delta(z)=\G(z)$ if $\G(z)\in \Delta$ and $\G_\Delta(z)=\G^2(z)$ otherwise.  The measure $\rho$ is the probability measure on $(\Gamma_S,\mathcal{B})$ with density $1/(\log2\bar\nu_G(\Delta)(1+XY)^2)$ (see Theorem 5.9 of \cite{K1991}).  Setting 
\[X_k^S:=[0/1;\alpha_{k+1}^S/\beta_{k+1}^S,\alpha_{k+2}^S/\beta_{k+2}^S,\dots], \qquad k\ge 0,\]
$Y_0^S:=0$ and 
\[Y_k^S:=[0/1;1/\beta_k^S,\alpha_k^S/\beta_{k-1}^S,\dots,\alpha_2^S/\beta_1^S], \qquad k\ge 1,\]
where $x=[\beta_0^S/\alpha_0^S;\alpha_1^S/\beta_1^S,\alpha_2^S/\beta_2^S,\dots]$ is the $S$-expansion of $x$, it is observed following Definition 5.8 of \cite{K1991} that 
\[(X_k^S,Y_k^S)=\tau^k(X_0^S,Y_0^S), \qquad k\ge 0.\]

Note that by Propositions \ref{two_shift_prop} and \ref{cfe&s-expn_agree}, $\tau^n$ and $\tau_R^n$ agree for all $n\ge 0$ when evaluated at $(X_0^S,Y_0^S)=(X_0^S,0)$.  We claim that in fact $(\Omega_R,\mathcal{B},\bar\nu_R,\tau_R)=(\Gamma_S,\mathcal{B},\rho,\tau)$.  By \eqref{varphi_R} and Lemma \ref{A_R-S-expns}, 
\begin{equation}\label{S-expn_phi_R}
\varphi_R(z)=\begin{cases}
\left(x,\frac{1-y}{y}\right), & \F_{H_1}(z)\in R,\\
\left(x-1,1-y\right), & \F_{H_1}(z)\notin R.
\end{cases}
\end{equation}

\begin{lem}\label{M_lem}
For any $z\in \Delta,$ $M(z)=\varphi_R\circ\psi^{-1}\circ\G_\Delta^{-1}(z)$.
\end{lem}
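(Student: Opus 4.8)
The plan is to evaluate the composition $\varphi_R\circ\psi^{-1}\circ\G_\Delta^{-1}$ from the inside out, splitting according to the two cases $z\in\Delta^+$ and $z\in\Delta^-$ that appear in the definition of $M$. The first task is to make $\G_\Delta^{-1}$ explicit. Since $\G_\Delta$ sends $w\in\Delta$ to $\G(w)$ when $\G(w)\in\Delta$ and to $\G^2(w)$ when $\G(w)\in S$, and since in the latter case $\G^2(w)\in\G(S)=\Delta^-$, I expect to establish
\[\G_\Delta^{-1}(z)=\begin{cases}\G^{-1}(z), & z\in\Delta^+,\\ \G^{-2}(z), & z\in\Delta^-.\end{cases}\]
For $z\in\Delta^+$ one checks that $\G^{-1}(z)\in\Delta$ (otherwise $z=\G(\G^{-1}(z))\in\G(S)=\Delta^-$, a contradiction), so the one-step branch applies. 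For $z\in\Delta^-$, writing $z=\G(s)$ with $s\in S$, condition (b) of a singularisation area, $S\cap\G(S)=\varnothing$, forces $\G^{-1}(s)\notin S$, hence $\G^{-1}(s)=\G^{-2}(z)\in\Delta$, so the two-step branch applies.

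Next I would apply $\psi^{-1}=\varphi_{H_1}^{-1}\circ\G$; the extra application of $\G$ collapses the two cases to $\psi^{-1}\circ\G_\Delta^{-1}(z)=\varphi_{H_1}^{-1}(z)$ for $z\in\Delta^+$ and $\psi^{-1}\circ\G_\Delta^{-1}(z)=\varphi_{H_1}^{-1}(\G^{-1}(z))$ for $z\in\Delta^-$, both lying in $R=\psi^{-1}(\Delta)=\varphi_{H_1}^{-1}(\G(\Delta))$. To apply $\varphi_R$ through the branches of \eqref{S-expn_phi_R}, I must decide on which side of $R$ the point $\F_{H_1}$ of the result lands. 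Using the intertwining $\F_{H_1}=\varphi_{H_1}^{-1}\circ\G\circ\varphi_{H_1}$, this image is $\varphi_{H_1}^{-1}(\G(z))$ in the first case and $\varphi_{H_1}^{-1}(z)$ in the second. The first lies in $R=\varphi_{H_1}^{-1}(\G(\Delta))$ since $z\in\Delta$ gives $\G(z)\in\G(\Delta)$; the second does not, since $z\in\G(S)$ while $\G(S)\cap\G(\Delta)=\varnothing$ up to a null set (as $S\cap\Delta=\varnothing$ and $\G$ is essentially injective). Hence the first branch of \eqref{S-expn_phi_R} governs $z\in\Delta^+$ and the second governs $z\in\Delta^-$.

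Finally I would compute $\varphi_R$ explicitly. For $z=(x,y)\in\Delta^+$, the first branch of \eqref{S-expn_phi_R} coincides with $\varphi_{H_1}$ by \eqref{H_1-varphi_R}, so $\varphi_R(\varphi_{H_1}^{-1}(z))=\varphi_{H_1}(\varphi_{H_1}^{-1}(z))=z=M(z)$, matching the definition of $M$ on $\Delta^+$. The part I expect to be most delicate is $z=(x,y)\in\Delta^-$, where $\G^{-1}(z)$ must be computed concretely: writing $z=\G(s)$ with $s\in S\subset V_1$, so that $a(\cdot)=1$ on the relevant domain, the Gauss natural extension \eqref{GNE} yields $\G^{-1}(z)=\bigl(1/(x+1),(1-y)/y\bigr)$. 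Applying $\varphi_{H_1}^{-1}$ produces the point $w=\bigl(1/(x+1),y\bigr)\in R$, and the second branch of \eqref{S-expn_phi_R} then gives $\varphi_R(w)=\bigl(-x/(1+x),1-y\bigr)$, which is exactly $M(z)$ on $\Delta^-$. The only remaining care is the standard handling of the Lebesgue-null boundary sets on which $\G$ fails to be injective, which is absorbed into the paper's standing a.e.\ conventions.
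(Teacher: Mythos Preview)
Your proposal is correct and follows essentially the same route as the paper: split into $\Delta^+$ and $\Delta^-$, identify $\G_\Delta^{-1}$ as $\G^{-1}$ or $\G^{-2}$ accordingly, apply $\psi^{-1}=\varphi_{H_1}^{-1}\circ\G$, determine which branch of \eqref{S-expn_phi_R} applies by checking whether $\F_{H_1}$ of the resulting point lies in $R$, and compute explicitly. The only notable difference is in the $\Delta^-$ branch check: you argue $\varphi_{H_1}^{-1}(z)\notin R$ via $z\in\G(S)$ and $\G(S)\cap\G(\Delta)=\varnothing$ up to a null set, whereas the paper observes directly that $\varphi_{H_1}$ of the point in question equals $\G^{-1}(z)\in S$, avoiding any appeal to injectivity of $\G$; both are fine.
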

\begin{proof}
Suppose first that $z=(x,y)\in \Delta^+$.
Now, since $z\notin \Delta^-=\G(S)$, we have $\G^{-1}(z)\in \Omega\setminus S=\Delta$.  Hence $\G_\Delta^{-1}(z)=\G^{-1}(z)$.  Then
\[\varphi_R\circ\psi^{-1}\circ\G_\Delta^{-1}(z)=\varphi_R\circ\varphi_{H_1}^{-1}(z)=\varphi_R\left(x,\frac1{1+y}\right).\]
Notice that 
\[\F_{H_1}\left(x,\frac1{1+y}\right)=\psi^{-1}\circ \varphi_{H_1}\left(x,\frac1{1+y}\right)=\psi^{-1}(z)\in \psi^{-1}(\Delta)=R,\]
so by \eqref{S-expn_phi_R}, $\varphi_R\left(x,1/(1+y)\right)=z$.  Thus, for $z\in\Delta^+$, $\varphi_R\circ\psi^{-1}\circ\G_\Delta^{-1}(z)=z=M(z).$

Next, suppose that $z\in \Delta^-$.  Then $z\in \G(S)$, so $\G^{-1}(z)\in S=\Omega\setminus \Delta$ and $\G_\Delta^{-1}(z)=\G^{-2}(z)$.  Moreover, since $\G^{-1}(z)\in S\subset V_1$, we have $\G^{-1}(z)=(1/(x+1),1/y-1)$.  With these observations, we find
\[\varphi_R\circ\psi^{-1}\circ\G_\Delta^{-1}(z)=\varphi_R\circ\varphi_{H_1}^{-1}\circ\G^{-1}(z)=\varphi_R\circ\varphi_{H_1}^{-1}\left(\frac1{x+1},\frac1y-1\right)=\varphi_R\left(\frac1{x+1},y\right).\] 
We claim that $\F_{H_1}(1/(x+1),y)\notin R$.  This is equivalent to $\psi^{-1}\circ \varphi_{H_1}(1/(x+1),y)\notin\psi^{-1}(\Delta)$, or $\varphi_{H_1}(1/(x+1),y)\in S$.  But $\varphi_{H_1}(1/(x+1),y)=(1/(x+1),1/y-1)=\G^{-1}(z)\in S$ by assumption, so the claim holds.  Thus, from \eqref{S-expn_phi_R}, we have $\varphi_R(1/(x+1),y)=(-x/(x+1),1-y)$ and $\varphi_R\circ\psi^{-1}\circ\G_\Delta^{-1}(z)=(-x/(x+1),1-y)=M(z)$ for $z\in \Delta^-$.
\end{proof}

\begin{prop}\label{S-expn_two_shift}
With $R=\psi^{-1}(\Delta),$
\[(\Omega_R,\mathcal{B},\bar\nu_R,\tau_R)=(\Gamma_S,\mathcal{B},\rho,\tau).\]
\end{prop}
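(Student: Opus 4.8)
The plan is to verify separately that the three constituents of the two systems coincide: the domains $\Omega_R=\Gamma_S$, the maps $\tau_R=\tau$, and the measures $\bar\nu_R=\rho$. The workhorse throughout is Lemma \ref{M_lem}, which gives $M=\varphi_R\circ\psi^{-1}\circ\G_\Delta^{-1}$ on $\Delta$, together with the fact (visible in the commutative diagram defining $\psi=\G^{-1}\circ\varphi_{H_1}$) that $\psi$ is an isomorphism intertwining $\F_{H_1}$ and $\G$, namely $\psi\circ\F_{H_1}=\G\circ\psi$, and that $\psi$ carries $\bar\mu_{H_1}$ to $\bar\nu_G$ with $\psi(R)=\Delta$.

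For the domains, first I would apply $M$ to $\Delta$. By Lemma \ref{M_lem}, $\Gamma_S=M(\Delta)=\varphi_R\bigl(\psi^{-1}(\G_\Delta^{-1}(\Delta))\bigr)$. Since $\G$ is invertible and $(\Omega,\mathcal{B},\bar\nu_G,\G)$ conservative ergodic, the induced map $\G_\Delta$ is a bijection of $\Delta$ up to null sets, so $\G_\Delta^{-1}(\Delta)=\Delta$; as $R=\psi^{-1}(\Delta)$, this yields $\Gamma_S=\varphi_R(R)=\Omega_R$.

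For the maps, the central identity is $\F_R=\psi^{-1}\circ\G_\Delta\circ\psi$ on $R$. This holds for two reasons: because $R\subset H_1$, the first return to $R$ under $\F$ can only occur through returns to $H_1$, so $\F_R$ coincides with the map $\F_{H_1}$ induced on $R$; and because $\psi$ intertwines $\F_{H_1}$ with $\G$ and sends $R$ to $\Delta$, inducing commutes with the isomorphism and carries $(\F_{H_1})_R$ to $\G_\Delta$. Granting this identity, I would invert Lemma \ref{M_lem} to write $M^{-1}=\G_\Delta\circ\psi\circ\varphi_R^{-1}$ and substitute into $\tau=M\circ\G_\Delta\circ M^{-1}$; the factor $\G_\Delta^{-1}$ coming from $M$ cancels one factor of $\G_\Delta$, leaving $\tau=\varphi_R\circ\psi^{-1}\circ\G_\Delta\circ\psi\circ\varphi_R^{-1}=\varphi_R\circ\F_R\circ\varphi_R^{-1}=\tau_R$ (and both fix $\{X=0\}$).

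For the measures, Theorem \ref{two_shift_map_and_measure} gives that $\bar\nu_R$ has density $1/\bigl(\bar\mu(R)(1+XY)^2\bigr)$, while $\rho$ has density $1/\bigl(\log 2\cdot\bar\nu_G(\Delta)(1+XY)^2\bigr)$, so it suffices to match the constants, i.e.\ to show $\bar\mu(R)=\log 2\cdot\bar\nu_G(\Delta)$. Since $\psi$ carries $\bar\mu_{H_1}$ to $\bar\nu_G$ and $\psi(R)=\Delta$, one has $\bar\nu_G(\Delta)=\bar\mu_{H_1}(R)=\bar\mu(R)/\bar\mu(H_1)$, and $\bar\mu(H_1)=\log 2$ (from the proof of Theorem \ref{H_1&Gauss}), giving exactly the required relation. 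The main obstacle will be making precise the identity $\F_R=\psi^{-1}\circ\G_\Delta\circ\psi$, which rests on the two observations that inducing commutes with an isomorphism and that $\F_R=(\F_{H_1})_R$ because $R\subset H_1$; once these are in hand, the domain, map, and measure computations are routine bookkeeping.
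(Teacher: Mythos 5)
Your proposal is correct and follows essentially the same route as the paper's proof: the domain identity via Lemma \ref{M_lem} (with $\G_\Delta^{-1}(\Delta)=\Delta$ up to null sets), the map identity via the conjugation $\F_R=\psi^{-1}\circ\G_\Delta\circ\psi$ cancelling the $\G_\Delta^{-1}$ inside $M$, and the measure identity via the common density shape $C(1+XY)^{-2}$. The only divergences are in how you justify two sub-steps---the paper verifies $\psi\circ\F_R=\G_\Delta\circ\psi$ by a direct two-case computation (using that either $\F_{H_1}(z)\in R$ or $\F_{H_1}^2(z)\in R$, which follows from $S\cap\G(S)=\varnothing$) where you invoke transitivity of inducing together with its equivariance under the isomorphism $\psi$, and the paper concludes $\bar\nu_R=\rho$ simply because both are probability measures on $\Omega_R=\Gamma_S$ with densities proportional to $(1+XY)^{-2}$ where you instead compute the normalising constants explicitly via $\bar\nu_G(\Delta)=\bar\mu_{H_1}(R)=\bar\mu(R)/\log 2$---and both of your substitute arguments are sound.
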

\begin{proof}
By Lemma \ref{M_lem},
\[\Omega_R=\varphi_R(R)=\varphi_R\circ \psi^{-1}(\Delta)=\varphi_R\circ \psi^{-1}\circ\G_\Delta^{-1}(\Delta)=M(\Delta)=\Gamma_S.\]
Moreover, 
\begin{align*}\psi\circ \F_R(z)=&\begin{cases}
\psi\circ \F_{H_1}(z), & \F_{H_1}(z)\in R,\\
\psi\circ \F_{H_1}^2(z), & \F_{H_1}(z)\notin R,\\
\end{cases}\\
=&\begin{cases}
\varphi_{H_1}(z), & \varphi_{H_1}(z)\in \Delta,\\
\G\circ\varphi_{H_1}(z), & \varphi_{H_1}(z)\notin \Delta,\\
\end{cases}\\
=&\begin{cases}
\G\circ\psi(z), & \G\circ\psi(z)\in \Delta,\\
\G^2\circ\psi(z), & \G\circ\psi(z)\notin \Delta,\\
\end{cases}\\
=&\G_{\Delta}\circ \psi(z),
\end{align*}
so
\[\tau_R=\varphi_R\circ\F_R\circ\varphi_R^{-1}=\varphi_R\circ\psi^{-1}\circ \G_\Delta\circ \psi\circ\varphi_R^{-1}=M\circ \G_\Delta\circ M^{-1}=\tau.\]
Lastly, $\bar\nu_R=\rho$ since these are both probability measures on $\Omega_R=\Gamma_S$ with densities of the form $C(1+XY)^{-2}$, where $C$ is a normalising constant.
\end{proof}

%%%%%%%%%%%%%
%%%%%%%%%%%%%
\subsection{Nakada's $\alpha$-continued fractions, revisited}\label{Nakada's alpha-continued fractions, revisited}
%%%%%%%%%%%%%
%%%%%%%%%%%%%

Recall Nakada's parameterised family of $\alpha$-{\sc cf} maps from \S\ref{Nakada's alpha-continued fractions}, which are defined for all $0\le\alpha\le 1$.  Moreover, recall from the end of \S\ref{S-expansions} that the natural extensions of the $\alpha$-{\sc cf}s are realised as $S$-expansion systems, but only for $\alpha\ge 1/2$.  Since, by \S\ref{S-expansions, revisited}, $S$-expansions are realised as contracted Farey expansions, so are Nakada's $\alpha$-{\sc cf}s for $\alpha\ge 1/2$.  In this subsection we extend this fact to $\alpha>0$, giving a new description of a planar natural extension of $([\alpha-1,\alpha],\mathcal{B},\rho_\alpha,G_\alpha)$ as an explicit induced transformation $(R,\mathcal{B},\bar\mu_R,\F_R)$ of Ito's natural extension of the Farey tent map (Theorem \ref{alpha_cf_ne} below; cf.~\cite{KSS2012}).

\begin{remark}
One finds that $G_\alpha([\alpha-1,\alpha])=[\alpha-1,\alpha)$, so $([\alpha-1,\alpha],\mathcal{B},\rho_\alpha,G_\alpha)$ is isomorphic to the restriction of this system to $[\alpha-1,\alpha)$, which we denote by $([\alpha-1,\alpha),\mathcal{B},\rho_\alpha,G_\alpha)$.  The endpoint $\alpha$ was included in the domain in \S\ref{Nakada's alpha-continued fractions} so that we could speak of matching, which depends on the $G_\alpha$-orbits of $\alpha$ and $\alpha-1$.  However, it shall be more convenient in this subsection to consider the isomorphic system $([\alpha-1,\alpha),\mathcal{B},\rho_\alpha,G_\alpha)$.  
\end{remark}

The domain $R$ will be constructed in two steps: first, we define a subset $A\subset H_1$ via an integer-valued map $k$ on $H_1$; second, $R$ is defined by `pushing' part of $A$ down into $\Omega\setminus H_1$ with the map $\F$.  Fix $\alpha\in(0,1]$ and define $k:H_1\to\mathbb{N}\cup\{\infty\}$ by
\[k(z):=\inf\{j>0\ |\ \F_{H_1}^{-j}(z)\in [0,\alpha)\times[1/2,1]\},\qquad z\in H_1,\]
and let
\[A:=\{z\in H_1\ |\ k(z)\ \text{is odd}\};\]
see Figure \ref{alphaNE_fig}.  Recall the definition of hitting times $N_R$ from (\ref{hit_time}).  The restriction of $N_R$ to $R$---also denoted $N_R$---is called the \emph{return time} to $R$.  We wish to determine the return times $N_A$ under $\F$.  For this, we use the following:

\begin{lem}\label{k(F(z))}
For any $z=(x,y)\in H_1$,
\[k(\F_{H_1}(z))=\begin{cases}
1, & x<\alpha,\\
k(z)+1, & x\ge \alpha.
\end{cases}\]
\end{lem}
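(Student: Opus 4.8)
The plan is to reduce everything to the single identity $\F_{H_1}^{-j}(\F_{H_1}(z))=\F_{H_1}^{-(j-1)}(z)$, valid for $j\ge 1$ on the full-measure, totally $\F_{H_1}$-invariant subset of $H_1$ on which all backward iterates are defined (as in \S\ref{A two-sided shift for contracted Farey expansions}, and consistent with Remarks \ref{closure_remark} and \ref{altered_maps}). This identity simply records that the backward $\F_{H_1}$-orbit of $w:=\F_{H_1}(z)$ is the backward orbit of $z$ shifted by one index, with the very first backward step from $w$ landing back on $z$ itself.

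First I would observe that membership in the target set $[0,\alpha)\times[1/2,1]$ depends only on the first coordinate. Every point of $H_1=[0,1]\times(1/2,1]$ has second coordinate in $(1/2,1]\subseteq[1/2,1]$, so for $p=(p_1,p_2)\in H_1$ one has $p\in[0,\alpha)\times[1/2,1]$ if and only if $p_1<\alpha$. Writing $x(p)$ for the first coordinate of a point $p$, this gives
\[k(z)=\inf\{j>0:\ x(\F_{H_1}^{-j}(z))<\alpha\},\]
and the analogous expression for $w=\F_{H_1}(z)$.

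Then I would split on the value of $x=x(z)$. By the identity above, $x(\F_{H_1}^{-j}(w))=x(\F_{H_1}^{-(j-1)}(z))$ for every $j\ge 1$; in particular the $j=1$ term equals $x(z)=x$. If $x<\alpha$, the infimum defining $k(w)$ is attained already at $j=1$, so $k(\F_{H_1}(z))=1$. If instead $x\ge\alpha$, the $j=1$ term fails the inequality, and re-indexing by $i=j-1\ge 1$ yields
\[k(\F_{H_1}(z))=1+\inf\{i>0:\ x(\F_{H_1}^{-i}(z))<\alpha\}=1+k(z),\]
which is precisely the claimed formula.

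I do not expect a genuine obstacle in this argument; the only point requiring care is bookkeeping at the measure-zero boundary—the line $y=1/2$ and the points with $x\in\{0,1\}$, where injectivity of $\F_{H_1}$ or the open/closed endpoint in the condition $y\in[1/2,1]$ could misbehave—but this is absorbed by restricting to the totally invariant full-measure set, exactly as the paper does throughout.
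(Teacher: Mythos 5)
Your proof is correct and takes essentially the same route as the paper's: both reduce to the identity $\F_{H_1}^{-j}(\F_{H_1}(z))=\F_{H_1}^{-(j-1)}(z)$ (in the paper, via $\F_{H_1}^{-1}(\F_{H_1}(z))=z$), verify that the $j=1$ term satisfies the defining condition exactly when $x<\alpha$, and re-index the infimum to obtain $k(z)+1$ when $x\ge\alpha$. Your explicit observations—that for points of $H_1$ membership in $[0,\alpha)\times[1/2,1]$ depends only on the first coordinate, and that boundary/invertibility subtleties are absorbed by restricting to a full-measure totally invariant set—simply make precise what the paper leaves implicit.
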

\begin{proof}
First, notice that $\F_{H_1}^{-1}(\F_{H_1}(z))=z$ belongs to $[0,\alpha)\times[1/2,1]$ if and only if $x<\alpha$.  Thus, if $x<\alpha$, then $k(\F_{H_1}(z))=1$.  If $x\ge \alpha$, then $k(z)$ is the infimum of powers $j>0$ for which 
\[\F_{H_1}^{-(j+1)}(\F_{H_1}(z))=\F_{H_1}^{-j}(z)\in[0,\alpha)\times[1/2,1].\]
Hence $k(\F_{H_1}(z))=k(z)+1$.
\end{proof}

\begin{lem}\label{N_A}
The return times $N_A:A\to\mathbb{N}$ under $\F$ are
\[N_A(z)=\begin{cases}
a_1, & x<\alpha,\\
a_1+a_2, & x\ge \alpha,
\end{cases}\]
where $z=(x,y)\in A$ with $x=[0;a_1,a_2,\dots]$.
\end{lem}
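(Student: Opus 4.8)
The plan is to reduce this first-return computation under $\F$ to a first-\emph{return} computation under the induced map $\F_{H_1}$, and then to read off the answer from the parity bookkeeping supplied by Lemma \ref{k(F(z))}. Since $A\subset H_1$, any iterate $\F^n(z)$ that lands in $A$ must in particular land in $H_1$, and by definition of the induced map the times at which the $\F$-orbit of $z$ meets $H_1$ are exactly $N_m^{H_1}(z)=a_1+a_2+\cdots+a_m$, $m\ge 0$ (recall from \eqref{F_{H_1}} that $\F_{H_1}(z)=\F^{a(x)}(z)$, and that the first coordinate of $\F_{H_1}^{\,j-1}(z)$ is $[0;a_j,a_{j+1},\dots]$, so each successive induced step advances $\F$ by the next {\sc rcf}-digit). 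Consequently, if $m=m(z):=\min\{j\ge 1\mid \F_{H_1}^{\,j}(z)\in A\}$ denotes the first return time to $A$ under $\F_{H_1}$, then $N_A(z)=a_1+\cdots+a_m$, and it remains only to prove that $m=1$ when $x<\alpha$ and $m=2$ when $x\ge\alpha$.

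First I would dispose of the case $x<\alpha$. Here Lemma \ref{k(F(z))} gives $k(\F_{H_1}(z))=1$, which is odd, so $\F_{H_1}(z)\in A$; hence $m=1$ and $N_A(z)=a_1$.

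For the case $x\ge\alpha$ I would first observe that membership $z\in A$ means $k(z)$ is (finite and) odd, so Lemma \ref{k(F(z))} yields $k(\F_{H_1}(z))=k(z)+1$, which is even; thus $\F_{H_1}(z)\notin A$ and $m\ge 2$. Writing $x'=[0;a_2,a_3,\dots]$ for the first coordinate of $\F_{H_1}(z)$, I would then apply Lemma \ref{k(F(z))} a second time: if $x'<\alpha$ then $k(\F_{H_1}^{2}(z))=1$, whereas if $x'\ge\alpha$ then $k(\F_{H_1}^{2}(z))=k(\F_{H_1}(z))+1=k(z)+2$. In both subcases this value is odd, so $\F_{H_1}^{2}(z)\in A$; therefore $m=2$ and $N_A(z)=a_1+a_2$.

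The crux—and the only place where the definition of $A$ through the parity of $k$ genuinely earns its keep—is this last case: one must verify that the second induced iterate returns to $A$ \emph{regardless} of whether $x'$ falls below or above $\alpha$, the two distinct mechanisms (a reset $k\mapsto 1$, or two successive increments $k\mapsto k+2$) each restoring the odd parity that characterises $A$. I expect no further difficulty, since the intermediate step $\F_{H_1}(z)\notin A$ already secures $m\ge 2$, and the two-step parity computation secures $m\le 2$; for completeness I would note that the values of $k$ invoked are finite for the points of $A$ under consideration, which holds by definition of $A$ as the set of $z\in H_1$ with $k(z)$ a finite odd integer.
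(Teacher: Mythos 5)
Your proposal is correct and follows essentially the same route as the paper's proof: both cases are settled by the parity bookkeeping of Lemma \ref{k(F(z))}, with the key observation in the case $x\ge\alpha$ that $k(\F_{H_1}(z))=k(z)+1$ is even while $k(\F_{H_1}^2(z))$ is odd in both subcases ($x'<\alpha$ giving a reset to $1$, $x'\ge\alpha$ giving $k(z)+2$). Your preliminary reduction via the visit times $N_m^{H_1}(z)=a_1+\cdots+a_m$ is just a repackaging of the paper's direct remark that $\F^j(z)\notin H_1\supset A$ for the intermediate exponents $j$, so the arguments coincide in substance.
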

\begin{proof}
First, suppose that $x<\alpha$, and notice that for all $0<j<a_1$, $\F^j(z)\in H_{j+1}\neq H_1$ implies $\F^j(z)\notin A\subset H_1$.  On the other hand, by Lemma \ref{k(F(z))}, $k(\F_{H_1}(z))=1$ is odd, so $\F_{H_1}(z)\in A$.  Since $\F_{H_1}(z)=\F^{a_1}(z)$, we have $N_A(z)=a_1$.  

Next, suppose that $x\ge \alpha$.  As above, $\F^j(z)\notin H_1$ for all $0<j<a_1+a_2$ with $j\neq a_1$, so $\F^j(z)\notin A$ for such $j$.  Moreover, $z\in A$ implies that $k(z)$ is odd, and thus by Lemma \ref{k(F(z))}, $k(\F_{H_1}(z))=k(z)+1$ is even.  Hence $\F^{a_1}(z)=\F_{H_1}(z)\notin A$.  Write $z'=(x',y'):=\F_{H_1}(z)$.  Again by Lemma \ref{k(F(z))}, 
\[k(\F_{H_1}(z'))=\begin{cases}
1, & x'<\alpha,\\
k(z')+1, & x'\ge \alpha.\\
\end{cases}\]
But $k(z')=k(\F_{H_1}(z))$ is even, so in either case $k(\F_{H_1}(z'))$ is odd.  Hence $\F^{a_1+a_2}(z)=\F_{H_1}^2(z)=\F_{H_1}(z')\in A$, and $N_A(z)=a_1+a_2$.  
\end{proof}

We now define the subregion $R\subset \Omega$ in terms of the set $A\subset H_1$.  For each integer $a>1$, let
\[A_a:=A\cap V_a\cap ([\alpha,1]\times[1/2,1])\]
be the set of points $z=(x,y)\in A$ for which $x=[0;a_1,a_2,\dots]$ with $a_1=a$ and $x\ge \alpha$.  Next, define
\begin{equation}\label{R_alpha_cfs}
R:=A\cup\bigcup_{a>1}\bigcup_{\lambda=1}^{a-1}\F^\lambda(A_a)
\end{equation}
as the region $A\subset H_1$ together with each $A_a$ `pushed down' into $\Omega\backslash H_1$ under $\F$ a maximal number of times; see Figure \ref{alphaNE_fig}.  Notice that if $\alpha>1/2$, then $A_a=\varnothing$ for $a>1$ and hence $R=A$.  

\begin{remark}
In Figure \ref{alphaNE_fig}, the region $A$ consists of rectangles extending from $x=0$ to $x=1$, and the region $R$ consists of $A$ together with rectangles extending from $x=F(1/4)=1/3$ to $x=1$ and $x=F^2(1/4)=1/2$ to $x=1$.  These `full' rectangles are due to the fact that $\alpha=1/4$ is of the form $\alpha=1/n$ for some integer $n\ge 1$; see also \cite{LM2008}, where the natural extension of the $\alpha$-{\sc cf} maps are constructed for such $\alpha$.  For general $\alpha>0$, one can show that $A$ consists of rectangles extending from various $x=x_0\in[0,1)$ to $x=1$.  
\end{remark}

\begin{figure}[t]
\centering
\includegraphics[width=.37\textwidth]{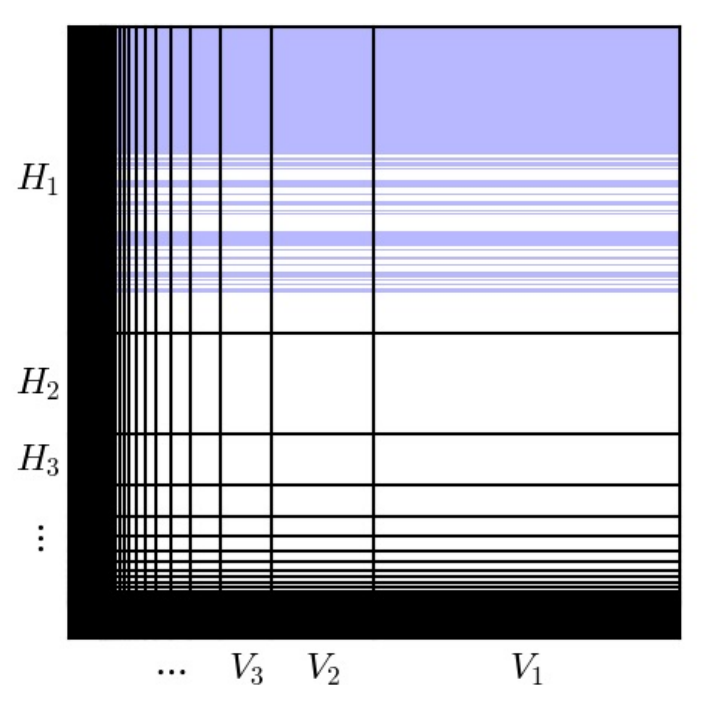}
\hspace{2pt}
\includegraphics[width=.37\textwidth]{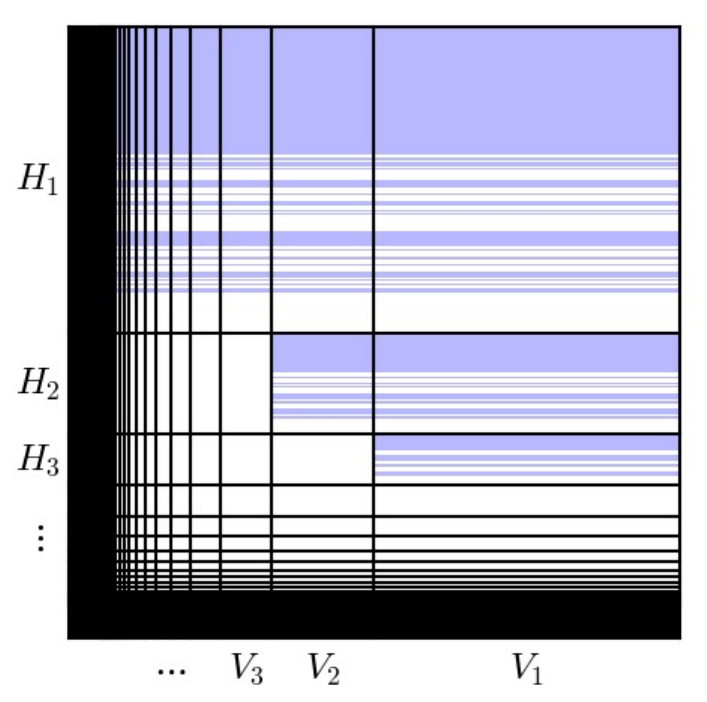}
%\hspace{15pt}
\includegraphics[width=.7\textwidth]{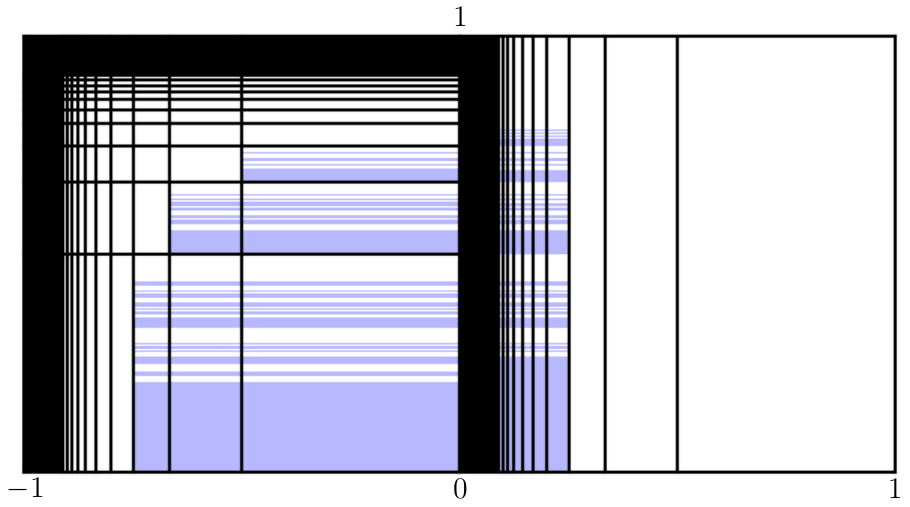}
\caption{Approximations of the regions $A$ (top-left), $R$ (top-right), and $\Omega_R$ (bottom) for $\alpha=1/4$.}
\label{alphaNE_fig}
\end{figure}

Lemma \ref{N_A} and the definition of $R$ give the following:

\begin{cor}\label{ret_times}
The return times $N_R:R\to\mathbb{N}$ under $\F$ are given by $N_R=N_A$ if $\alpha>1/2$ and 
\[N_R(z)=\begin{cases}
a_1, & x<\alpha,\\
1, & \alpha\le x\le 1/2,\\
a_2+1, & 1/2<x,
\end{cases}\]
if $\alpha\le 1/2$, where $z=(x,y)\in A$ with $x=[0;a_1,a_2,\dots]$.
\end{cor}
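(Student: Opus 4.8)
The plan is to reduce the statement to Lemma~\ref{N_A}, which already computes the first return time to $A$, and then to account for the additional return opportunities created by the pushed-down copies $\F^\lambda(A_a)$ that constitute $R\setminus A$.

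The case $\alpha>1/2$ is immediate: for $a>1$ every point of $V_a$ has first coordinate at most $1/a\le 1/2<\alpha$, so it cannot satisfy $x\ge\alpha$; hence $A_a=\varnothing$ and $R=A$ by \eqref{R_alpha_cfs}, giving $N_R=N_A$, which is exactly the claim in this regime.

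For $\alpha\le 1/2$ I would first record a clean description of $R\setminus A$. The map $\F^\lambda$ carries $V_a\cap H_1$ bijectively (up to null sets) onto $V_{a-\lambda}\cap H_{\lambda+1}$, and the pair $(a,\lambda)$ is recovered from the indices of this target rectangle: if it is $V_b\cap H_c$, then $\lambda=c-1$ and $a=b+c-1$. Hence the union in \eqref{R_alpha_cfs} is disjoint, and a point $p\in V_b\cap H_c$ with $c\ge 2$ lies in $R\setminus A$ if and only if $\F^{-(c-1)}(p)\in A_{b+c-1}$. I expect this bookkeeping---tracking which sliding rectangle an orbit point occupies and deciding when it belongs to a pushed-down copy---to be the main obstacle, though it is routine rather than deep.

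With this description in hand I would split $A$ into the three strips $x<\alpha$, $\alpha\le x\le 1/2$ and $x>1/2$ (which partition $[0,1]$ since $\alpha\le 1/2$). If $x<\alpha$, Lemma~\ref{N_A} gives a return to $A$ at time $a_1$, while each intermediate point $\F^j(z)\in V_{a_1-j}\cap H_{j+1}$, $1\le j<a_1$, can lie in $R\setminus A$ only if $z=\F^{-j}(\F^j(z))\in A_{a_1}$; this fails because $x<\alpha$, so $N_R(z)=a_1$. If $\alpha\le x\le 1/2$, then $a_1\ge 2$ and one checks from the definitions that $z\in A_{a_1}$, so $\F(z)\in\F(A_{a_1})\subseteq R$ and $N_R(z)=1$. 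Finally, if $x>1/2$, then $a_1=1$ and $x\ge\alpha$, so Lemma~\ref{N_A} gives a return to $A$ at time $a_2+1$; using Lemma~\ref{k(F(z))} I would show that $z':=\F(z)=\F_{H_1}(z)\notin A$. Since $z'\in H_1$ it also lies outside $R\setminus A\subseteq\bigcup_{c\ge 2}H_c$, so $z'\notin R$, and $z'\notin A_{a_2}$ forces each intermediate point $\F^\lambda(z')\in V_{a_2-\lambda}\cap H_{\lambda+1}$, $1\le\lambda<a_2$, to avoid $R\setminus A$ as well. Thus the first return to $R$ coincides with the return to $A$, which gives $N_R(z)=a_2+1$.
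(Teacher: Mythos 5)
Your proposal is correct and follows the same route as the paper, which derives the corollary directly from Lemma \ref{N_A} together with the definition \eqref{R_alpha_cfs} of $R$ (the paper states it as an immediate consequence, with no further detail). Your added bookkeeping---that the sets $\F^\lambda(A_a)$ sit in pairwise disjoint rectangles $V_{a-\lambda}\cap H_{\lambda+1}$, so a point of $V_b\cap H_c$, $c\ge 2$, lies in $R\setminus A$ precisely when $\F^{-(c-1)}$ of it lies in $A_{b+c-1}$---is exactly the implicit content of the paper's appeal to the definition of $R$, and your three-strip case analysis (using $z\in A_{a_1}$ when $\alpha\le x\le 1/2$, and $k(\F_{H_1}(z))$ even when $x>1/2$) verifies it correctly, up to the boundary/null-set technicalities the paper itself sets aside.
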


From this and Equation (\ref{A_R(z)}), we find that if $\alpha>1/2$, then
\begin{equation}\label{A_R,alpha>1/2}
A_R(z)=\begin{pmatrix}
u_R(z) & t_R(z)\\
s_R(z) & r_R(z)
\end{pmatrix}
=\left\{\begin{array}{lr}
A_0^{a_1-1}A_1, & x<\alpha\\
A_1A_0^{a_2-1}A_1, & x\ge\alpha
\end{array}\right\}
=\begin{cases}
\begin{pmatrix}
0 & 1\\
1 & a_1
\end{pmatrix}, & x<\alpha,\\
\begin{pmatrix}
1 & a_2\\
1 & a_2+1
\end{pmatrix}, & x\ge\alpha,
\end{cases}
\end{equation}
while if $\alpha\le 1/2$,
\begin{equation}\label{A_R,alpha<=1/2}
A_R(z)=\begin{pmatrix}
u_R(z) & t_R(z)\\
s_R(z) & r_R(z)
\end{pmatrix}
=\left\{\begin{array}{lr}
A_0^{a_1-1}A_1, & x<\alpha\\
A_0, & \alpha\le x\le 1/2\\
A_1A_0^{a_2-1}A_1, & 1/2<x
\end{array}\right\}
=\begin{cases}
\begin{pmatrix}
0 & 1\\
1 & a_1
\end{pmatrix}, & x<\alpha,\\
\begin{pmatrix}
1 & 0\\
1 & 1
\end{pmatrix}, & \alpha\le x\le 1/2,\\
\begin{pmatrix}
1 & a_2\\
1 & a_2+1
\end{pmatrix}, & 1/2<x.
\end{cases}
\end{equation}
Notice, in particular, that $s_R(z)=1$ for all $z$, and $R$ satisfies the assumptions of \S\ref{A two-sided shift for contracted Farey expansions}.  Moreover, 
\begin{equation}\label{alpha_cfs-u_R}
u_R(z)=\begin{cases}
0, & x<\alpha,\\
1 & x\ge \alpha,
\end{cases}
\end{equation}
so the map $\varphi_R:R\to\mathbb{R}^2$ from \eqref{varphi_R} is given by 
\begin{equation}\label{varphi_R-alpha}
\varphi_R(z)=\begin{cases}
\left(x,\frac{1-y}{y}\right), & x<\alpha,\\
\left(x-1,1-y\right), & x\ge \alpha.
\end{cases}
\end{equation}
The region $\Omega_R=\varphi_R(R)$ is shown in Figure \ref{alphaNE_fig}.

Before proving that $(R,\mathcal{B},\bar\mu_R,\F_R)$ is the natural extension of $([\alpha-1,\alpha),\mathcal{B},\rho_\alpha,G_\alpha)$, we determine the values of $\alpha_R(z)$ and $\beta_R(z)$ defined in (\ref{alpha_R}) and (\ref{beta_R}).

\begin{lem}\label{alpha_R,beta_R,alpha-cf}
Let $z=(x,y)\in R$ with $x\neq 0,1$.  Then 
\[\alpha_R(z)=\begin{cases}
1, & x<\alpha,\\
-1, & x\ge \alpha,
\end{cases}
\qquad \text{and} \qquad
\beta_R(z)=\begin{cases}
\left\lfloor\frac1x+1-\alpha \right\rfloor, & x<\alpha,\\
\left\lfloor\frac1{1-x}+1-\alpha \right\rfloor, & x\ge\alpha.\\
\end{cases}\]
\end{lem}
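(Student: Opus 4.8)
The plan is to reduce the definitions \eqref{alpha_R} and \eqref{beta_R} to quantities that can be read off directly from the explicit matrices \eqref{A_R,alpha>1/2} and \eqref{A_R,alpha<=1/2}, and then to match the resulting integers with Nakada's $\alpha$-{\sc cf} digits through one elementary floor computation. First I would record the two simplifications valid throughout this subsection: since $s_R(w)=1$ for every $w\in R$, and both $\F_R^{-1}(z)$ (when defined) and $\F_R(z)$ lie in $R$, we have $d_R(z)=1$ and $s_R(\F_R(z))=1$. Substituting into \eqref{alpha_R} and \eqref{beta_R} yields the working formulas
\[\alpha_R(z)=-\det(A_R(z))\qquad\text{and}\qquad \beta_R(z)=u_R(\F_R(z))+r_R(z).\]
The claim for $\alpha_R$ is then immediate from \eqref{A_R,alpha>1/2}--\eqref{A_R,alpha<=1/2}: when $x<\alpha$ the matrix $A_R(z)=\left(\begin{smallmatrix}0&1\\1&a_1\end{smallmatrix}\right)$ has determinant $-1$, while for $x\ge\alpha$ it is one of $\left(\begin{smallmatrix}1&0\\1&1\end{smallmatrix}\right)$ or $\left(\begin{smallmatrix}1&a_2\\1&a_2+1\end{smallmatrix}\right)$, each of determinant $+1$; hence $\alpha_R(z)=1$ for $x<\alpha$ and $\alpha_R(z)=-1$ for $x\ge\alpha$.

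For $\beta_R$ I would isolate the elementary fact that for any $t\in[0,1)$ and $\alpha\in(0,1]$ one has $\lfloor t+1-\alpha\rfloor=0$ if $t<\alpha$ and $=1$ if $t\ge\alpha$, since $t+1-\alpha\in(0,2)$ crosses $1$ exactly at $t=\alpha$. This value is precisely the indicator $u_R(\F_R(z))\in\{0,1\}$ read off from \eqref{alpha_cfs-u_R}, applied to the first coordinate $x_1^R:=F^{N_R(z)}(x)$ of $\F_R(z)$. The argument then splits along the branches of \eqref{A_R,alpha>1/2}--\eqref{A_R,alpha<=1/2}, in each of which I would read off $r_R(z)$ as the bottom-right matrix entry, compute $x_1^R$ and its {\sc rcf}-expansion (using the return times of Corollary \ref{ret_times}), and expand $1/x$ or $1/(1-x)$ in terms of that tail. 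When $x<\alpha$ one has $N_R(z)=a_1$, $r_R(z)=a_1$ and $x_1^R=G(x)=1/x-a_1\in[0,1)$; writing $1/x+1-\alpha=a_1+(G(x)+1-\alpha)$ and applying the floor fact gives $\beta_R(z)=a_1+u_R(\F_R(z))=\lfloor 1/x+1-\alpha\rfloor$. When $x>1/2$ (so $a_1=1$) one has $r_R(z)=a_2+1$ and $x_1^R=G^2(x)$; using $1/(1-x)=1+1/G(x)=a_2+1+G^2(x)$, the same floor fact yields $\beta_R(z)=(a_2+1)+u_R(\F_R(z))=\lfloor 1/(1-x)+1-\alpha\rfloor$.

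The remaining branch---$\alpha\le 1/2$ with $\alpha\le x\le 1/2$---is where the main obstacle lies, since there $N_R(z)=1$ and $x_1^R=F(x)=x/(1-x)$ does not fit the tail pattern above, so the floor fact must be supplemented by a direct interval estimate. For this branch $r_R(z)=1$, and I would show $u_R(\F_R(z))=1$ by verifying $x/(1-x)\ge\alpha$ for every $x\ge\alpha$ (equivalently $x\ge\alpha/(1+\alpha)$, which follows from $\alpha^2\ge 0$), giving $\beta_R(z)=2$. Separately, monotonicity of $x\mapsto 1/(1-x)$ on $[\alpha,1/2]$ together with the bounds $1/(1-\alpha)+1-\alpha>2$ (again $\alpha^2>0$) and $1/(1-\tfrac12)+1-\alpha=3-\alpha<3$ places $1/(1-x)+1-\alpha$ in the open interval $(2,3)$, so its floor equals $2=\beta_R(z)$, completing the case. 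Throughout I would freely invoke Corollary \ref{ret_times} and the matrix forms \eqref{A_R,alpha>1/2}--\eqref{A_R,alpha<=1/2}, so that no fresh analysis of the geometry of $R$---in particular of its pushed-down part---is required beyond what is already established there.
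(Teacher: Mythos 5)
Your proposal is correct and follows essentially the same route as the paper: both reduce via $s_R\equiv 1$ to $\alpha_R(z)=-\det A_R(z)$ and $\beta_R(z)=r_R(z)+u_R(\F_R(z))$, read the matrices off \eqref{A_R,alpha>1/2}--\eqref{A_R,alpha<=1/2}, and verify the floor formulas branch by branch using the identities $1/x=a_1+G(x)$ and $1/(1-x)=a_2+1+G^2(x)$ together with $x/(1-x)\ge\alpha$ in the middle branch. Your packaging of the paper's unit-interval inequality chains into the single observation $\lfloor t+1-\alpha\rfloor=\mathbb{1}[t\ge\alpha]$ for $t\in[0,1)$, identified with $u_R(\F_R(z))$ via \eqref{alpha_cfs-u_R}, is a tidier but equivalent presentation of the same argument.
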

\begin{proof}
Let $z=(x,y)\in R$ with $x=[0;a_1,a_2,\dots]$, and set $z'=(x',y')=\F_R(z)$.  From (\ref{alpha_R}), the fact that $s_R(z)=1$ for all $z$, and (\ref{A_R,alpha>1/2}) and (\ref{A_R,alpha<=1/2}), we have
\[\alpha_R(z)=-\det(A_R(z))=\begin{cases}
1, & x<\alpha,\\
-1, & x\ge \alpha,
\end{cases}\]
as claimed.  

Next, from (\ref{beta_R}) and the fact that $s_R(z)=1$ for all $z$, we have $\beta_R(z)=r_R(z)+u_R(z')$.  If $\alpha>1/2$, then from (\ref{A_R,alpha>1/2}) and \eqref{alpha_cfs-u_R}, we find that
\begin{equation}\label{beta_R,alpha>1/2}
\beta_R(z)=\left\{\begin{array}{lr}
a_1+u_R(z'), & x<\alpha\\
a_2+1+u_R(z'), & x\ge \alpha
\end{array}\right\}
=\begin{cases}
a_1, & x<\alpha\ \text{and}\ x'<\alpha,\\
a_1+1, & x<\alpha\ \text{and}\ x'\ge\alpha,\\
a_2+1, & x\ge \alpha\ \text{and}\ x'<\alpha,\\
a_2+2, & x\ge \alpha\ \text{and}\ x'\ge\alpha.
\end{cases}
\end{equation}
Now suppose $\alpha\le 1/2$.  Notice that if $\alpha\le x\le 1/2$, then by (\ref{F_R2}) and (\ref{A_R,alpha<=1/2}), 
\begin{equation}\label{x',alpha<=x<=1/2}
x'=A_R^{-1}(z)\cdot x=\begin{pmatrix}1 &0\\ -1 & 1\end{pmatrix}\cdot x=\frac{x}{1-x}> x\ge \alpha
\end{equation}
implies $u_R(z')=1$.  Hence, again from (\ref{A_R,alpha<=1/2}) and \eqref{alpha_cfs-u_R},
\begin{equation}\label{beta_R,alpha<=1/2}
\beta_R(z)=\left\{\begin{array}{lr}
a_1+u_R(z'), & x<\alpha\\
1+u_R(z'), & \alpha\le x\le 1/2\\
a_2+1+u_R(z'), & 1/2<x
\end{array}\right\}
=\begin{cases}
a_1, & x<\alpha\ \text{and}\ x'<\alpha,\\
a_1+1, & x<\alpha\ \text{and}\ x'\ge\alpha,\\
2, & \alpha\le x\le 1/2,\\
a_2+1, & 1/2<x\ \text{and}\ x'<\alpha,\\
a_2+2, & 1/2<x\ \text{and}\ x'\ge\alpha.
\end{cases}
\end{equation}
The remainder of the proof consists of cases.  Throughout, we repeatedly use the two inequalities $\alpha\le 1+x'$ and $x'<1+\alpha$, which follow from $\alpha\in (0,1]$ and $x'\in[0,1]$.
\begin{enumerate}
\item[(i)] Suppose that $x<\alpha$.  We must show $\beta_R(z)=\left\lfloor\frac1x+1-\alpha \right\rfloor.$  By (\ref{F_R2}), (\ref{A_R,alpha>1/2}) and (\ref{A_R,alpha<=1/2}),
\[x'=A_R^{-1}(z)\cdot x=\begin{pmatrix}a_1 &-1\\ -1 & 0\end{pmatrix}\cdot x=\frac1x-a_1,\]
so
\[\frac1x+1-\alpha=x'+a_1+1-\alpha.\]
\begin{enumerate}
\item[(a)]  If $x'<\alpha$, then 
\[a_1\le x'+a_1+1-\alpha< a_1+1,\]
and by (\ref{beta_R,alpha>1/2}) and (\ref{beta_R,alpha<=1/2}), $\beta_R(z)=a_1=\left\lfloor\frac1x+1-\alpha \right\rfloor.$
\item[(b)]  If $x'\ge \alpha$, then 
\[a_1+1\le x'+a_1+1-\alpha< a_1+2,\]
so by (\ref{beta_R,alpha>1/2}) and (\ref{beta_R,alpha<=1/2}), $\beta_R(z)=a_1+1=\left\lfloor\frac1x+1-\alpha \right\rfloor.$
\end{enumerate}

\item[(ii)] Now suppose that $x\ge \alpha$.  We must show $\beta_R(z)=\left\lfloor \frac1{1-x}+1-\alpha\right\rfloor$.  
\begin{enumerate}
\item[(a)]  If $x\le 1/2$, then from the computation in (\ref{x',alpha<=x<=1/2}), $1+x'=1/(1-x)$.  Hence
\[\frac1{1-x}+1-\alpha=2+x'-\alpha,\]
and (using $x'>x$)
\[2\le 2+x-\alpha< 2+x'-\alpha<3.\]
By (\ref{beta_R,alpha<=1/2}), $\beta_R(z)=2=\left\lfloor \frac1{1-x}+1-\alpha\right\rfloor.$

\item[(b)] 
Now suppose that $x>1/2$.  By (\ref{F_R2}), (\ref{A_R,alpha>1/2}) and (\ref{A_R,alpha<=1/2}),
\[x'=A_R^{-1}(z)\cdot x=\begin{pmatrix}a_2+1 &-a_2\\ -1 & 1\end{pmatrix}\cdot x=\frac{x}{1-x}-a_2,\]
and
\[\frac{1}{1-x}+1-\alpha=\frac{x}{1-x}+2-\alpha=a_2+2+x'-\alpha.\]
\begin{enumerate}
\item[(1)]  If $x'<\alpha$, then 
\[a_2+1\le a_2+2+x'-\alpha<a_2+2,\]
and by (\ref{beta_R,alpha>1/2}) and (\ref{beta_R,alpha<=1/2}), $\beta_R(z)=a_2+1=\left\lfloor \frac1{1-x}+1-\alpha\right\rfloor.$

\item[(2)]  Lastly, if $x'\ge \alpha$, then 
\[a_2+2\le a_2+2+x'-\alpha< a_2+3,\]
so by (\ref{beta_R,alpha>1/2}) and (\ref{beta_R,alpha<=1/2}), $\beta_R(z)=a_2+2=\left\lfloor \frac1{1-x}+1-\alpha\right\rfloor.$
\end{enumerate}
\end{enumerate}
\end{enumerate}
\end{proof}

We are now in a position to prove:

\begin{thm}\label{alpha_cf_ne}
The induced system $(R,\mathcal{B},\bar\mu_R,\F_R)$ is the natural extension of $([\alpha-1,\alpha),\mathcal{B},\rho_\alpha,G_\alpha)$.
\end{thm}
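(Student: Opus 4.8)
The plan is to work through the isomorphic two-sided shift system $(\Omega_R,\mathcal{B},\bar\nu_R,\tau_R)$ of \S\ref{A two-sided shift for contracted Farey expansions} rather than directly with $(R,\mathcal{B},\bar\mu_R,\F_R)$, and to exhibit projection onto the first coordinate as the required factor map. Define $\pi\colon\Omega_R\to[\alpha-1,\alpha)$ by $\pi(X,Y):=X$. From \eqref{varphi_R-alpha} the first coordinate of $\varphi_R(z)$ equals $x$ when $x<\alpha$ and $x-1$ when $x\ge\alpha$, so as $z=(x,y)$ ranges over $R$ the value $X$ ranges over $[0,\alpha)\cup[\alpha-1,0]=[\alpha-1,\alpha)$ (up to endpoints); this identifies the image of $\pi$, and I would first confirm from the construction of $R$ that $\pi$ is onto $[\alpha-1,\alpha)$ modulo a null set.

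The key step is the factor relation $\pi\circ\tau_R=G_\alpha\circ\pi$. By Theorem \ref{two_shift_map_and_measure} the first coordinate of $\tau_R(X,Y)$ is $\alpha_R(z)/X-\beta_R(z)$ with $z=\varphi_R^{-1}(X,Y)$, and Lemma \ref{alpha_R,beta_R,alpha-cf} supplies $\alpha_R$ and $\beta_R$. When $x<\alpha$ one has $X=x>0$, $\alpha_R(z)=1$ and $\beta_R(z)=\lfloor 1/x+1-\alpha\rfloor$, so the first coordinate is $1/X-\lfloor 1/|X|+1-\alpha\rfloor=G_\alpha(X)$; when $x\ge\alpha$ one has $X=x-1<0$, $|X|=1-x$, $\alpha_R(z)=-1$ and $\beta_R(z)=\lfloor 1/(1-x)+1-\alpha\rfloor$, so the first coordinate is $-1/X-\lfloor 1/|X|+1-\alpha\rfloor=1/|X|-\lfloor 1/|X|+1-\alpha\rfloor=G_\alpha(X)$. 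Thus $\pi\tau_R=G_\alpha\pi$ in both cases. This sign bookkeeping—tracking how the two branches of $\varphi_R$ interact with the absolute value in the definition of $G_\alpha$—is the crux of the argument.

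For the measures, since $(\Omega_R,\mathcal{B},\bar\nu_R,\tau_R)$ is isomorphic to the induced system $(R,\mathcal{B},\bar\mu_R,\F_R)$ (a probability-preserving system, as $R$ is proper), $\tau_R$ preserves the probability measure $\bar\nu_R$, whose density $1/(\bar\mu(R)(1+XY)^2)$ is absolutely continuous on $\Omega_R$. Hence $\pi_*\bar\nu_R$ is an absolutely continuous, $G_\alpha$-invariant probability measure on $[\alpha-1,\alpha)$, and by the uniqueness of such a measure for $\alpha\in(0,1]$ (\cite{LM2008}) I conclude $\pi_*\bar\nu_R=\rho_\alpha$; notably, this avoids any appeal to an explicit formula for $\rho_\alpha$.

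It remains to verify that $(\Omega_R,\mathcal{B},\bar\nu_R,\tau_R)$ really is the natural extension, i.e.\ that $\tau_R$ is invertible and $\bigvee_{n\ge0}\tau_R^{\,n}\pi^{-1}\mathcal{B}=\mathcal{B}|_{\Omega_R}$ modulo null sets. Invertibility follows because $\F$ is an automorphism (being a natural extension) and induced maps of invertible systems are invertible a.e., so $\tau_R=\varphi_R\F_R\varphi_R^{-1}$ is invertible a.e. For the generation of the $\sigma$-algebra I would use Corollary \ref{two_shift_cor}: under the identification of $\Omega_R$ with bi-infinite digit sequences $(\alpha_R(z_n^R),\beta_R(z_n^R))_{n\in\mathbb{Z}}$, the coordinate $X=\pi$ is a function of the non-negative digits alone, so $\tau_R^{\,n}\pi^{-1}\mathcal{B}$ depends on the digits indexed $\ge -n$, and taking the join over $n\ge0$ captures every digit. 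By Proposition \ref{gen_set_of_B} the cylinders $\Delta(\cdots)\times\Delta(\cdots)$ in these digits generate $\mathcal{B}|_{\Omega_R}$, completing the identification. The main obstacle, beyond the branch/sign bookkeeping already noted, is ensuring $[\alpha-1,\alpha)$ is covered surjectively and that the boundary and null-set exceptions (e.g.\ $x\in\{0,1\}$ and the finitely many pushed-down strips $\F^\lambda(A_a)$ comprising $R$) do not interfere with any of these steps.
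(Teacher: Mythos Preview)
Your proposal is correct and follows essentially the same route as the paper's own proof: pass to the isomorphic system $(\Omega_R,\mathcal{B},\bar\nu_R,\tau_R)$, take $\pi$ to be projection onto the first coordinate, verify $\pi\circ\tau_R=G_\alpha\circ\pi$ via Theorem~\ref{two_shift_map_and_measure} and Lemma~\ref{alpha_R,beta_R,alpha-cf}, deduce $\pi_*\bar\nu_R=\rho_\alpha$ from uniqueness, and use Proposition~\ref{gen_set_of_B} together with Corollary~\ref{two_shift_cor} to show that the iterated pullback $\sigma$-algebras generate $\mathcal{B}|_{\Omega_R}$. The paper additionally treats $\alpha=1$ separately (via Theorem~\ref{H_1&Gauss}) and gives an explicit construction for surjectivity of $\pi$, but these are minor refinements of the same argument.
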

\begin{proof}
When $\alpha=1$, then $R=[0,1)\times [1/2,1)$, and $(R,\mathcal{B},\bar\mu_R,\F_R)$ is isomorphic to $(H_1,\mathcal{B},\bar\mu_{H_1},\F_{H_1})$.  The result follows from Theorem \ref{H_1&Gauss} and the fact that for $\alpha=1$, $([\alpha-1,\alpha),\mathcal{B},\rho_\alpha,G_\alpha)$ is (isomorphic to) $([0,1],\mathcal{B},\nu_G,G)$. 

Now suppose $\alpha\in (0,1)$.  Since $(R,\mathcal{B},\bar\mu_R,\F_R)$ and the system $(\Omega_R,\mathcal{B},\bar\nu_R,\tau_R)$ from \S\ref{A two-sided shift for contracted Farey expansions} are isomorphic, it suffices to show that the latter system is the natural extension of $([\alpha-1,\alpha),\mathcal{B},\rho_\alpha,G_\alpha)$.  Throughout, we shall consider the restrictions of $(R,\mathcal{B},\bar\mu_R,\F_R)$ and $(\Omega_R,\mathcal{B},\bar\nu_R,\tau_R)$ to the full-measure subsets on which $\F_R^n$ and $\tau_R^n$ are defined for all $n\in\mathbb{Z}$, and such that for any $(x,y)\in R$ and any $(X,Y)\in \Omega_R$, both $x$ and $X$ are irrational; see the discussion preceding Proposition \ref{gen_set_of_B}.  Since $G_\alpha([\alpha-1,\alpha)\backslash\mathbb{Q})\subset [\alpha-1,\alpha)\backslash\mathbb{Q}$, we shall in fact show that $(\Omega_R,\mathcal{B},\bar\nu_R,\tau_R)$ is the natural extension of $([\alpha-1,\alpha),\mathcal{B},\rho_\alpha,G_\alpha)$ restricted to $[\alpha-1,\alpha)\backslash\mathbb{Q}$, which we denote $([\alpha-1,\alpha)\backslash\mathbb{Q},\mathcal{B},\rho_\alpha,G_\alpha)$

To distinguish the Borel $\sigma$-algebras restricted to $\Omega_R$ and $[\alpha-1,\alpha)\backslash\mathbb{Q}$, we shall denote these by $\mathcal{C}$ and $\mathcal{D}$, respectively.  Notice that $([\alpha-1,\alpha)\backslash\mathbb{Q},\mathcal{D},\rho_\alpha,G_\alpha)$ is non-invertible and $(\Omega_R,\mathcal{C},\bar\nu_R,\tau_R)$ is invertible.  We will show (i) that $([\alpha-1,\alpha)\backslash\mathbb{Q},\mathcal{D},\rho_\alpha,G_\alpha)$ is a factor of $(\Omega_R,\mathcal{C},\bar\nu_R,\tau_R)$ with factor map $\pi_X:\Omega_R\to[\alpha-1,\alpha)\backslash\mathbb{Q}$ being the projection onto the first coordinate, and (ii) that the factor map $\pi_X$ satisfies
\[\bigvee_{n=0}^\infty \tau_R^n\circ \pi_X^{-1}(\mathcal{D})=\mathcal{C},\]
where $\bigvee_{n=0}^\infty \tau_R^n\circ \pi_X^{-1}(\mathcal{D})$ is the smallest $\sigma$-algebra containing each $\sigma$-algebra $\tau_R^n\circ \pi_X^{-1}(\mathcal{D}),\ n\ge 0$.

\begin{enumerate}
\item[(i)]  We must show that $\pi_X:\Omega_R\to [\alpha-1,\alpha)\backslash \mathbb{Q}$ is measurable, surjective, and satisfies $\pi_X\circ \tau_R=G_\alpha\circ \pi_X$ and $\bar\nu_R\circ\pi_X^{-1}=\rho_\alpha$.  Certainly $\pi_X$ is measurable, since for any Borel set $A\in\mathcal{D}$, $\pi_X^{-1}(A)=(A\times [0,1])\cap \Omega_R\in \mathcal{C}$ is a Borel set in $\Omega_R$.  For surjectivity, suppose $\alpha$ has {\sc rcf}-expansion $\alpha=[0;\alpha_1,\alpha_2,\dots]$, and let $z=(x,y)\in H_1$ with $x=[0;a_1,a_2,\dots]\notin\mathbb{Q}$ and $y=[0;1,b,b,b,\dots]$ for some $b>\alpha_1$.  Then 
\[\F_{H_1}^{-1}(z)=([0;b,a_1,a_2,\dots],[0;1,b,b,\dots])\in [0,\alpha)\times[1/2,1],\] 
so $k(z)=1$ is odd and $z\in A$.  Similarly, $k(\F_{H_1}^{-n}(z))=1$ for all $n\ge 0$, so $\F_{H_1}^{-n}(z)\in A$ for all $n\ge 0$.  This---together with Corollary \ref{ret_times}---implies that $\F_{R}^{n}(z)\in R$ is defined for all $n\in\mathbb{Z}$.  Since $x\in[0,1]\backslash\mathbb{Q}$ was arbitrary, \eqref{varphi_R-alpha} gives $\pi_X(\Omega_R)=\pi_X(\varphi_R(R))=[\alpha-1,\alpha)\backslash\mathbb{Q}$, i.e., $\pi_X$ is surjective.  

Next, we show $\pi_X\circ \tau_R=G_\alpha\circ \pi_X$.  Let $(X,Y)=(X(z),Y(z))\in \Omega_R$, where $z=(x,y)\in R$, and notice from \eqref{varphi_R-alpha} that 
\[X=\begin{cases}
x, & x<\alpha,\\
x-1, & x\ge\alpha.
\end{cases}\]
Moreover, $x<\alpha$ if and only if $X>0$, and $x\ge \alpha$ if and only if $X<0$.  These observations, together with Theorem \ref{two_shift_map_and_measure} and Lemma \ref{alpha_R,beta_R,alpha-cf}, give
\begin{align*}
\pi_X\circ \tau_R(X,Y)=&\frac{\alpha_R(z)}{X}-\beta_R(z)\\
=&\begin{cases}
\frac1X-\left\lfloor\frac1x+1-\alpha\right\rfloor, & x<\alpha,\\
-\frac1X-\left\lfloor\frac1{1-x}+1-\alpha\right\rfloor, & x\ge \alpha,
\end{cases}\\
=&\begin{cases}
\frac1X-\left\lfloor\frac1X+1-\alpha\right\rfloor, & X>0,\\
-\frac1X-\left\lfloor-\frac1X+1-\alpha\right\rfloor, & X<0,
\end{cases}\\
=&\frac{1}{|X|}-\left\lfloor\frac1{|X|}+1-\alpha\right\rfloor\\
=&G_\alpha\circ\pi_X(X,Y)
\end{align*}
as desired.  Lastly, notice that for any Borel set $A\in\mathcal{D}$, $\tau_R$-invariance of $\bar\nu_R$ gives
\[\bar\nu_R\circ \pi_X^{-1}(G_\alpha^{-1}(A))=\bar\nu_R\circ \tau_R^{-1}(\pi_X^{-1}(A))=\bar\nu_R\circ \pi_X^{-1}(A),\]
so $\bar\nu_R\circ \pi_X^{-1}$ is an absolutely continuous, $G_\alpha$-invariant probability measure.  Uniqueness of $\rho_\alpha$ implies $\bar\nu_R\circ \pi_X^{-1}=\rho_\alpha$.  Thus $([\alpha-1,\alpha),\mathcal{D},\rho_\alpha,G_\alpha)$ is a factor of $(\Omega_R,\mathcal{C},\bar\nu_R,\tau_R)$.

\item[(ii)]  We now show that
\[\bigvee_{n=0}^\infty \tau_R^n\circ \pi_X^{-1}(\mathcal{D})=\mathcal{C}.\]
The forward inclusion follows from measurability of $\pi_X$ and $\tau_R^{-1}$, so it suffices to show the backward inclusion.  For this, it suffices to show that every element of a generating set of the Borel $\sigma$-algebra $\mathcal{C}$ on $\Omega_R$ can be written as $\tau_R^k\circ \pi_X^{-1}(D)$ for some $D\in \mathcal{D}$ and $k\ge 0$.  By Proposition \ref{gen_set_of_B}, $\mathcal{C}$ is generated by the sets
\[C=\Delta(0/1;\alpha_1/\beta_1,\alpha_2/\beta_2,\dots,\alpha_n/\beta_n)\times\Delta(0/1;1/\beta_0,\alpha_0/\beta_{-1},\alpha_{-1}/\beta_{-2},\dots,\alpha_{-(m-1)}/\beta_{-m})\]
containing all points $(X(z),Y(z))\in \Omega_R$ for which
\[\alpha_R(z_j^R)=\alpha_{j+1}\quad \text{and}\quad \beta_R(z_k^R)=\beta_{k+1}\]
for all $-m\le j\le n-1$ and $-m-1\le k\le n-1$. 

Let $D\in\mathcal{D}$ be the set of irrationals $X\in [\alpha-1,\alpha)$ for which 
\[\text{sgn}(G_\alpha^j(X))=\alpha_{j-m} \quad \text{and} \quad \left\lfloor\frac1{|G_\alpha^k(X)|}+1-\alpha\right\rfloor=\beta_{k-m}\]
for all $1\le j\le n+m$ and $0\le k\le n+m$.  Let $X\in [\alpha-1,\alpha)\setminus\mathbb{Q}$, $(X,Y)=(X(z),Y(z))\in\pi_X^{-1}(\{X\})$, and $z_k^R=\F_R^k(z)$ for all $k\in\mathbb{Z}$.  Using the fact that $G_\alpha\circ\pi_X=\pi_X\circ\tau_R$, Equations \eqref{XnRYnRunRsnR}, \eqref{zn=varphi_inv_XnYN} and Lemma \ref{alpha_R,beta_R,alpha-cf} give 
\[\text{sgn}(G_\alpha^k(X))=\alpha_R(z_k^R)\quad \text{and} \quad \left\lfloor\frac1{|G_\alpha^k(X)|}+1-\alpha\right\rfloor=\beta_R(z_k^R), \quad k\ge 0,\]
so $\pi_X^{-1}(D)$ is the set of points $(X(z),Y(z))\in\Omega_R$ such that 
\[\alpha_R(z_j^R)=a_{j-m}\quad \text{and} \quad \beta_R(z_k^R)=\beta_{k-m}\]
for all $1\le j\le n+m$ and $0\le k\le n+m$.  By Corollary \ref{two_shift_cor}, this is the set of points of the form
\begin{align*}
X=X(z)=&[0/1;\alpha_R(z_0^R)/\beta_R(z_0^R),\alpha_R(z_1^R)/\beta_R(z_1^R),\dots]\\
=&[0/1;\alpha_R(z_0^R)/\beta_{-m},\alpha_{-(m-1)}/\beta_{-(m-1)},\dots,\alpha_{n}/\beta_n,\alpha_R(z_{n+m+1}^R)/\beta_R(z_{n+m+1}^R),\dots]
\end{align*}
and
\[Y=Y(z)=[0/1;1/\beta_R(z_{-1}^R),\alpha_R(z_{-1}^R)/\beta_R(z_{-2}^R),\dots].\]
Since $(X_{m+1}^R,Y_{m+1}^R)=\tau_R^{m+1}(X,Y)$ is of the form
\[X_{m+1}^R=[0/1;\alpha_1/\beta_1,\dots,\alpha_n/\beta_n,\alpha_R(z_{n+m+1}^R)/\beta_R(z_{n+m+1}^R),\dots]\]
and
\[Y_{m+1}^R=[0/1;1/\beta_0,\alpha_0/\beta_{-1},\dots,\alpha_{-(m-1)}/\beta_{-m},\alpha_R(z_0^R)/\beta_R(z_{-1}^R),\dots],\]
we have
$\tau_R^{m+1}\circ\pi_X^{-1}(D)=C$.
\end{enumerate}
\end{proof}

\begin{remark}
Recall from the end of \S\ref{Nakada's alpha-continued fractions} that there are several open questions about Nakada's $\alpha$-{\sc cf}s, including explicit descriptions of the values of the entropy $h(G_\alpha)$ for $\alpha<g^2$, $g=(\sqrt{5}-1)/2$, and of the densities of the invariant measures $\rho_\alpha$ (\cite{KSS2012}).  It is also open to explicitly compute the so-called \emph{Legendre constant} for $\alpha<g^2$ (\cite{dJK2018,Nat11}).

Each of these questions may be answered with an understanding of the domain of the natural extension of $([\alpha-1,\alpha),\mathcal{B},\rho_\alpha,G_\alpha)$; see, e.g., Theorem \ref{entropy_thm} for the entropy.  To date, however, the description of this domain has proven to be unmanageable for these tasks.  Our new description of the natural extension $(R,\mathcal{B},\bar\mu_R,\F_R)$ could bring many of these questions within reach.  Indeed, by \eqref{R_alpha_cfs}, in order to understand $R$ is suffices to understand the set $A\subset H_1$.  We hope to return to these questions in subsequent work and suspect that matching (see \S\ref{Nakada's alpha-continued fractions} above) will play a crucial role in their resolution.
\end{remark}

%%%%%%%%%%%
%%%%%%%%%%%
%%%%%%%%%%%
% Bibliography
%%%%%%%%%%%
%%%%%%%%%%%
%%%%%%%%%%%

\bibliography{bib/bib}
\bibliographystyle{acm}

\end{document}